\numberwithin{equation}{section}
\DeclareFontFamily{OMX}{yhex}{}
\DeclareFontShape{OMX}{yhex}{m}{n}{<->yhcmex10}{}
\DeclareSymbolFont{yhlargesymbols}{OMX}{yhex}{m}{n}
\DeclareMathAccent{\wideparen}{\mathord}{yhlargesymbols}{"F3}
\title{Convergences of Combinatorial Ricci Flows to  Degenerated Circle Packings in Hyperbolic Background Geometry}
\author{Guangming Hu, Sicheng Lu, Dong Tan, Youliang Zhong and Puchun Zhou}
\date{}
\newtheorem{theorem}{Theorem}[section]
\newtheorem{lemma}[theorem]{Lemma}
\newtheorem{proposition}[theorem]{Proposition}
\newtheorem{corollary}[theorem]{Corollary}
\theoremstyle{definition}
\newtheorem{definition}[theorem]{Definition}
\newtheorem{remark}[theorem]{Remark}
\renewcommand{\arraystretch}{1.5}
\newcommand{\pp}[2]{\frac{\partial#1}{\partial#2}}
\newcommand{\cRf}{combinatorial Ricci flow}
\newcommand{\Lrz}{\mathbb{E}^{2,1}}
\renewcommand{\arraystretch}{1}
\newcommand{\DEF}{\emph}
\newcommand{\PRO}{\textcolor{brown}} 
\begin{document}
\maketitle 

\begin{abstract} 
This paper investigates a kind of degenerated circle packings in hyperbolic background geometry. A main problem is whether a prescribed total geodesic curvature data can be realized by a degenerated circle packing or not. We fully characterize the sufficient and necessary conditions and show the uniqueness. Furthermore, we introduce the combinatoral Ricci flow to find the desired degenerated circle packed surface, analougus to the methods of Chow-Luo \cite{chow-Luo} and Takatsu \cite{Takatsu}. 

 \medskip
\noindent\textbf{Mathematics Subject Classification (2020)}: 52C25, 52C26, 53A70.
\end{abstract}
 

\section{Introduction}
\subsection{Background}
In the last few  decades, the research of discrete conformal structures on  polyhedral surfaces has been widely concerned, which is a discrete simulation corresponding to conformal structures on  smooth surfaces. The circle packings on  polyhedral surfaces can be regarded as a  naturally  discrete problem of conformal structures. 
In 1936, Koebe  \cite{Koebe} showed the projective rigidity of circle packings on the Riemann sphere. In 1970,  Andreev \cite{andreev1} rediscovered this rigidity theorem. In 1976, Thurston \cite{thurston} investigated circle packings on both Euclidean and hyperbolic closed surfaces as a method to explore hyperbolic 3-manifolds. 

The variational principle and \cRf\ play an impormtant role in the study of circle packing. Colin de Verdiere \cite{colin} utilized the variational principle to reconfirm the existence and uniqueness of Thurston's circle packing theorem. 
This principle inspires a wealth of remarkable works on polyhedron surfaces and related topics, among which are \cite{bo, Leibon, Rivin, Luo12, Luo13}.
However, the proof of existence part is non-constructive. Chow-Luo introduced the \cRf\ in \cite{chow-Luo} to control the evolution of geodesic curvature, giving a new proof of the existence. This is an analogue of Hamilton's Ricci flow on surfaces \cite{Hamilton2}. Their result can be regarded as an algorithm for finding the desired circle packing that converges exponentially fast. The sufficient and necessary conditions for convergence are also given.

The settings of variational principle and \cRf\ in Euclidean and hyperbolic geometry can not be applied directly to spherical geometry, due to the breaking of convexity \cite{Luo}. Recently, Nie \cite{nie} discovered a new convex functional where geodesic curvature is replaced by the total geodesic curvature, and obtains the rigidity of circle pattern in spherical background geometry. The fifth author and his collaborators introduce the \cRf\ in \cite{GHZ} and obtains convergence results similar to Chow-Luo. The first author and his collaborators extend the study of total geodesic curvature to hyperbolic geometry \cite{BHS}. The circles in the packing are allowed to be usual circles, horocycles and hypercycles. In general, the sufficient and necessary condition for the existence and rigidity of circle packing or pattern and the convergence of \cRf\ in various settings are established.

Now it is a natural question to ask: what is the asymptotic behavior of the \cRf\ when the sufficient and necessary conditions are not valid and take critical values; can we describe it geometrically? This problem is firstly proposed by Chow-Luo in \cite{chow-Luo}. Takatsu \cite{Takatsu} addresses this problem in the original setting of Chow-Luo, based on an infinitesimal description of degenerated circle patterns. 
In this paper, we try to answer a similar question under the setting of \cite{BHS} and give a geometric interpretation of degenerated generalized  circle packed surfaces. These degenerated structures provide a partial boundary of the space of generalized  circle packed surfaces. And a convergence result is obtained, which allows the \cRf\ converge to a boundary point. Similar results on circle patterns in spherical background geometry have been obtained by partial authors of this paper in \cite{HLSZ}.

To state our main results, we first clarify some basic settings.

\subsection{Generalized hyperbolic circle packing and realization problem}
Let $S$ be a connected closed surface and $G$ be a simple triangulation of $S$. 
Denote by  $V$, $E$, $F$  the sets of vertices, edges and triangles of $G$, respectively.
For simplicity, we usually identify $V$ as the set $\{1,2,\cdots,\lvert V \rvert\}.$

A generalized circle packed surface is obtained by gluing generalized hyperbolic triangles along their edges according to the triangulation $G$, such that the circle packing on adjacent triangles matches (See Definition \ref{def:three_circle}, \ref{def:circ.pack.surf.}). The triangulation $G$ is called the nerve of this circle packed surface. 
A surface is completely determined by the geodesic curvatures of the circles centered at the vertices, which can be encoded as a vector $\vec{k}:=(k_v)_{v\in V} \in \mathbb{R}^{V}$. Hence the space of all generalized circle packed surfaces with nerve $G$ is identified with the first quadrant $\Omega_k := \mathbb{R}_{>0}^{V} $.

Finding a structure with prescribed geometric quantities is a key problem in the study of polyhedral surfaces. Analogous to the classical setting of prescribed Gaussian curvatures at vertices,    the problem of prescribed total geodesic curvatures of vertices in hyperbolic background geometry is studied in \cite{BHS}.

For any non-empty subset $I \subset V$, let $F_I \subset F$ be the set of triangles with at least one vertex in $I$, and define a function $\eta^I$ on $\mathbb{R}_{\geq 0}^V$ as 
\begin{equation}\label{def:eta}
    \eta^I(\vec{T}):=  \pi \big\lvert F_I \big\rvert - \sum_{ i\in I} T_i 
\end{equation}
for any $\vec{T}:=(T_v)_{v\in V} \in \mathbb{R}_{\geq 0}^V$. 
Denote by $\Omega_T$ the bounded open polyhedron
\begin{equation}\label{Omega_T}
\Omega_T :=\left\{\ \vec{T} \in\mathbb{R}^V_{>0} \ \left\vert\ \eta^I(\vec{T})>0,\ \forall\ \textrm{non-empty subset }\ I\subset V \right.\ \right\} \ .
\end{equation}
It is shown in \cite{BHS} that there exists a generalized circle packed surface realizing the prescribed total geodesic curvature $\vec{T} \in \mathbb{R}_{>0}^V$ if and only if $\vec{T} \in \Omega_T$. This result is obtained by showing that the map  
\begin{equation}\label{map:Phi_T}
\begin{split}
    \Phi: \Omega_k &\to \mathbb{R}_{>0}^V \\
    \vec{k} &\mapsto (T_v(\vec{k}))_{v\in V}
\end{split}
\end{equation}
recording the total geodesic curvature of circles is actually a homeomorphism onto $\Omega_T$ (Also see Theorem \ref{mr}). Furthermore, the combinatorial Ricci flow is introduced to find the desired surface with the prescribed total geodesic curvature.

Since $\Omega_T$ admits an apparent boundary in Euclidean space, one may naturally ask the following problem:

\noindent{\bf Problem }\; {\em Can the point on the boundary $\partial\Omega_T$ be realized as the prescribed total geodesic curvature of some geometric surface? How to find it?}

In this paper, we will give a partially affirmative answer to the problem.

Denote by $\overline{\Omega}_k := \mathbb{R}_{\geq0}^V$ and 
\begin{equation}\label{def:widehat_T}
\widehat{\Omega}_T:=\left\{\ \vec{T} \in\mathbb{R}^{ V}_{\geq 0}\ \left\vert\ \eta^I(\vec{T})>0,\ \forall\ \textrm{non-empty }\ I\subset V \right. \right\} .
\end{equation}
Let 
\begin{equation*}
 \partial \Omega_k := \overline{\Omega}_k \setminus \Omega_k
 \quad \text{and} \quad 
 \partial_0 \Omega_T := \widehat{\Omega}_T \setminus \Omega_T
\end{equation*}
where the subscript $0$ emphasizes the boundary $\partial_0 \Omega_T$ is a partial boundary rather that the whole boundary of $\Omega_T$. 
A point in $\partial \Omega_k$ corresponds to the unique degenerated generalized circle packed surface (See Proposition \ref{prop:deg.circ.pack.surf.}), on which the total geodesic curvature of packed circles are still well-defined. It is the geometric object for our realization problem.

\begin{theorem}\label{thm:realization}
    For any $\vec{T} \in \partial_0 \Omega_T$, there exists a unique degenerated generalized circle packed surface realizing the prescribed total geodesic curvature $\vec{T}$. 
\end{theorem}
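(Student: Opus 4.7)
The plan is to extend the homeomorphism $\Phi$ of (\ref{map:Phi_T}) to a continuous map $\widehat{\Phi}:\overline{\Omega}_k\to\widehat{\Omega}_T$ and then show that this extension restricts to a bijection $\partial\Omega_k\to\partial_0\Omega_T$. By Proposition \ref{prop:deg.circ.pack.surf.}, every $\vec{k}\in\partial\Omega_k$ determines a unique degenerated generalized circle packed surface on which the total geodesic curvature $T_v(\vec{k})$ at each vertex is well-defined (vanishing precisely where $k_v=0$), so $\widehat{\Phi}(\vec{k}):=(T_v(\vec{k}))_{v\in V}$ naturally extends $\Phi$ to $\overline{\Omega}_k$. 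A preliminary step is to check that $\widehat{\Phi}$ is continuous across $\partial\Omega_k$, which amounts to verifying that the edge lengths, inner angles and arc-angles inside each generalized hyperbolic triangle depend continuously on the vertex curvatures $\vec{k}$ up to the degenerated boundary.

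Existence then follows by an approximation argument. Given $\vec{T}\in\partial_0\Omega_T$, I would select a sequence $\vec{T}^{(n)}\in\Omega_T$ with $\vec{T}^{(n)}\to\vec{T}$ and set $\vec{k}^{(n)}:=\Phi^{-1}(\vec{T}^{(n)})\in\Omega_k$. The core compactness claim is that $\{\vec{k}^{(n)}\}$ is precompact in $\overline{\Omega}_k$; granted this, a subsequence converges to some $\vec{k}^{*}\in\overline{\Omega}_k$ and the continuity of $\widehat{\Phi}$ yields $\widehat{\Phi}(\vec{k}^{*})=\vec{T}$. Since $\vec{T}\notin\Omega_T$ and $\Phi$ is bijective onto $\Omega_T$, the limit $\vec{k}^{*}$ must lie in $\partial\Omega_k$, producing the required degenerated realization.

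For uniqueness, two approaches suggest themselves. The cleanest is to extend the variational principle underlying the injectivity of $\Phi$ in \cite{BHS}: the associated convex energy functional should extend continuously to $\overline{\Omega}_k$ and remain strictly convex on each stratum of $\overline{\Omega}_k$ obtained by prescribing which coordinates vanish, so that two distinct degenerated realizations of the same $\vec{T}$ would contradict strict convexity. Alternatively, a direct limiting argument consists of perturbing two putative degenerated realizations into $\Omega_k$, observing that their images under $\Phi$ converge to the same $\vec{T}\in\partial_0\Omega_T$, and invoking continuity of $\widehat{\Phi}$ together with a local injectivity statement near the boundary.

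The principal obstacle is the compactness claim in the existence step, specifically ruling out that some coordinate $k_v^{(n)}$ escapes to $+\infty$. Upper bounds should be extracted from the boundedness of $\vec{T}^{(n)}$ together with a monotonicity principle (each $T_v$ should increase with $k_v$ when the other curvatures are frozen), but because circles at adjacent vertices interact through the shared triangle geometry, this argument requires systematically exploiting the full system of polyhedral inequalities $\eta^I(\vec{T})>0$ defining $\widehat{\Omega}_T$. A related subtlety is to verify that the subsequential limit $\vec{k}^{*}$ indeed corresponds to an admissible (possibly degenerated) packed surface rather than a configuration in which the triangle geometry collapses further; controlling this again reduces to the strict inequalities carried by the functionals $\eta^I$.
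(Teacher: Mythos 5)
Your overall strategy — extend $\Phi$ continuously to $\widehat{\Phi}$, get existence by approximating $\vec{T}\in\partial_0\Omega_T$ from inside $\Omega_T$ and passing to a subsequential limit, and get uniqueness from strict convexity of the potential on each stratum — is a genuinely different and in some ways simpler route than the paper's. The paper proves the existence (surjectivity of $\widehat{\Phi}$ onto $\partial_0^I\Omega_T$) via Bobenko–Springborn style \emph{coherent systems} and the \emph{Feasible Flow Theorem} (Propositions \ref{prop:image}, \ref{prop:co}), which constructs an explicit admissible decomposition $(T_v^f)_{v\in f}$ and minimizes a modified energy. Your compactness argument sidesteps this linear-programming machinery entirely. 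On uniqueness you and the paper coincide: both rely on strict convexity of the stratum-wise energy $\mathcal{E}^I$ (Lemma \ref{lem:convex}) and the resulting injectivity of its gradient (Lemma \ref{lem:embedding}).

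The gap in your proposal is the precompactness step, and the mechanism you propose for it — a monotonicity principle ``each $T_v$ should increase with $k_v$ when the other curvatures are frozen'' — will not close it. Even if $T_v$ is monotone in $k_v$, it remains \emph{bounded} as $k_v\to+\infty$: by equation \eqref{eq:k123}, $\theta_i\to\pi$ as $k_i\to\infty$ and hence $T_v^f\to\pi$ per triangle. Thus boundedness of the total geodesic curvatures does not by itself prevent a coordinate from escaping to infinity. What actually forces compactness is the joint blow-up estimate of Lemma \ref{lem:limit} combined with Gauss--Bonnet applied to the shrinking dual triangle: if $k_j^{(n)}\to+\infty$ for $j$ in some nonempty $J\subset V$ while the other coordinates stay bounded, then for each $f\in F_J$ one has $\sum_{v\in f\cap J}T_v^f(\vec{k}^{(n)})\to\pi$, so $\sum_{j\in J}T_j(\vec{k}^{(n)})\to\pi\lvert F_J\rvert$, i.e.\ $\eta^J(\vec{T}^{(n)})\to0$. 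This contradicts $\eta^J(\vec{T})>0$, which holds because $\vec{T}\in\widehat{\Omega}_T$. You do correctly intuit that the polyhedral constraints $\eta^I>0$ are what must be exploited, and indeed this is exactly the argument the paper runs in its proof of Theorem \ref{Main_result_1} (for continuity of the inverse). So your route is viable, but it requires replacing the monotonicity heuristic with the Gauss--Bonnet collapse argument of Lemma \ref{lem:limit}; as stated your precompactness claim is not yet proved and the tool named for it would fail.

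One further point worth recording: once you have $\vec{k}^*=\lim\vec{k}^{(n)}$ with $\widehat{\Phi}(\vec{k}^*)=\vec{T}\in\partial_0^I\Omega_T$, you should check that $\vec{k}^*$ actually lies in the stratum $\partial^I\Omega_k$ matching $I$. This follows because on a (possibly degenerated) circle packed triangle the arc length $l_v^f$ is strictly positive, so $T_v(\vec{k}^*)=0$ if and only if $k_v^*=0$; combined with the disjoint stratification of $\widehat{\Omega}_T$ this pins down the stratum of $\vec{k}^*$, and hence that $\vec{k}^*$ corresponds to an honest degenerated circle packed surface via Proposition \ref{prop:deg.circ.pack.surf.}.
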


Similar to Theorem \ref{mr}, Theorem \ref{thm:realization} is obtained by showing that the map recording the total geodesic curvature is a homeomorphism. 
This map admits a layered structure. For any non-empty $I \subset V$, the \DEF{stratum} $\partial^I \Omega_k$ (or $\partial_0^I \Omega_T$ resp.) of the boundary $\partial \Omega_k$ (or $\partial_0 \Omega_T$ resp.) indexed by $I$ is the subset of elements satisfying
\begin{equation}\label{def:stratum}
    k_i = 0 \ (\text{or } T_i = 0 \text{ resp.}) \iff i \in I .
\end{equation}
By denoting
\begin{equation*}
    \partial^\varnothing \Omega_k = \Omega_k
    \quad \text{and} \quad
    \partial_0^\varnothing \Omega_T = \Omega_T ,
\end{equation*}
we have the following decomposition
\begin{equation}
    \overline{\Omega}_k = \bigcup_{I \subset V} \partial^I \Omega_k
    \quad \text{and} \quad
    \widehat{\Omega}_T = \bigcup_{I \subset V} \partial_0^I \Omega_T .
\end{equation}
Theorem \ref{thm:realization} is then derived from the following result on continuous extension of $\Phi$. 
\begin{theorem}\label{Main_result_1}
The homeomorphism $\Phi : \Omega_k \to \Omega_T$ extends continuously to a homeomorphism $\widehat{\Phi} : \overline{\Omega}_k \to \widehat{\Omega}_T$, with $\widehat{\Phi}(\partial^I\Omega_k) = \partial_0^I \Omega_T $ for any $I\subset V$.
\end{theorem}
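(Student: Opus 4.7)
The plan is to prove Theorem \ref{Main_result_1} by first extending $\Phi$ pointwise to $\overline{\Omega}_k$ via the degenerated circle packings of Proposition \ref{prop:deg.circ.pack.surf.}, then verifying continuity, and finally establishing that the extended map is a homeomorphism onto $\widehat{\Omega}_T$ via a stratum-by-stratum analysis.

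First, for each $\vec{k}\in\partial^I\Omega_k$, Proposition \ref{prop:deg.circ.pack.surf.} produces a unique degenerated circle packed surface in which the ``circles'' at vertices $i\in I$ become geodesics. The total geodesic curvature $T_v(\vec{k})$ is well-defined on this surface and vanishes exactly when $v\in I$; setting $\widehat{\Phi}(\vec{k}):=(T_v(\vec{k}))_{v\in V}$ agrees with $\Phi$ on $\Omega_k$. To establish continuity of $\widehat{\Phi}$, it suffices to show that the inner angles of a generalized hyperbolic triangle depend continuously on the geodesic curvatures $(k_i,k_j,k_\ell)$ of its three circles, including the degenerate boundary cases when some of these are zero. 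This follows from a direct analysis of the trigonometric identities (or edge-length formulas combined with the hyperbolic law of cosines) that govern such triangles: the relevant formulas extend continuously to the boundary, and the limiting values coincide with the inner angles of the degenerated triangle constructed in Proposition \ref{prop:deg.circ.pack.surf.}. Since $T_v$ is obtained by summing contributions (expressible through these inner angles) over the triangles incident to $v$, continuity at the boundary follows.

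Next, we verify $\widehat{\Phi}(\partial^I\Omega_k)\subset\partial_0^I\Omega_T$. The equivalence $T_i=0\iff i\in I$ is immediate from the degeneration. The strict inequalities $\eta^J(\widehat{\Phi}(\vec{k}))>0$ reduce to local strict inequalities attached to each triangle of the packing, together with the fact that the angle sum around any vertex is strictly bounded in the correct direction; summing these contributions over the triangles in $F_J$ yields $\eta^J>0$. For the reverse inclusion and for injectivity, we work stratum by stratum: injectivity of $\widehat{\Phi}|_{\partial^I\Omega_k}$ follows from a variational principle analogous to the one underlying Theorem \ref{mr}, applied to the reduced problem parametrized by $\mathbb{R}^{V\setminus I}_{>0}$, while different strata are separated by the vanishing pattern of $\vec{T}$ so that global injectivity follows. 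Surjectivity onto $\partial_0^I\Omega_T$ is obtained by approximating any $\vec{T}\in\partial_0^I\Omega_T$ by a sequence in $\Omega_T$, pulling back via $\Phi^{-1}$, and using a priori bounds to extract a subsequence convergent in $\overline{\Omega}_k$ whose limit lies in $\partial^I\Omega_k$. Continuity of $\widehat{\Phi}^{-1}$ then follows from continuity of $\widehat{\Phi}$ together with properness.

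The main obstacle is the asymptotic analysis at the boundary. As $k_i\to 0$, the underlying hyperbolic triangle may become unbounded (its opposite vertex receding to infinity), and the edge-length or angle formulas may develop apparent singularities. Controlling these degenerations uniformly, so that the inner angles and hence the total curvatures converge to the values geometrically assigned on the limiting degenerated circle packed surface, and ensuring that the strict inequalities defining $\widehat{\Omega}_T$ genuinely persist through the limit (rather than only surviving as non-strict ones), is the crux of the argument.
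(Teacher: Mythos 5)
Your outline is correct, and your surjectivity argument is genuinely different from the paper's and somewhat simpler. The paper proves $\widehat{\Phi}(\partial^I\Omega_k)=\partial_0^I\Omega_T$ in two stages: first, a variational/coercivity argument (using the triangle-level homeomorphism of Proposition \ref{prop:local_homeo}) shows the image equals $\Psi(\mathcal{CO}^I)$, the image of a ``coherent system'' in the sense of Bobenko--Springborn; second, a linear-programming argument via the Feasible Flow Theorem (Theorem \ref{thm:FFT}) shows $\Psi(\mathcal{CO}^I)=\partial_0^I\Omega_T$. You bypass coherent systems and feasible flows entirely: approximate $\vec{T}^*\in\partial_0^I\Omega_T$ by interior points $\vec{T}^n\in\Omega_T$ (possible since $\eta^J$ is continuous and only finitely many strict constraints must survive), set $\vec{k}^n=\Phi^{-1}(\vec{T}^n)$, derive a priori boundedness of $\{\vec{k}^n\}$ from Lemma \ref{lem:limit} summed over faces of $F_J$ — precisely the estimate the paper uses only for continuity of $\widehat{\Phi}^{-1}$ — and then a subsequential limit $\vec{k}^*$ satisfies $\widehat{\Phi}(\vec{k}^*)=\vec{T}^*$ by continuity, landing in $\partial^I\Omega_k$ because the strata are preserved by $\widehat{\Phi}$. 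This is a legitimate shortcut, and combined with injectivity from the stratum-wise variational principle and properness it yields the homeomorphism; the trade-off is that the feasible-flow route also produces, as a byproduct, the coherent-system description of $\partial_0^I\Omega_T$ that the paper reuses elsewhere. Two technical points you should not leave implicit. Injectivity on $\partial^I\Omega_k$ needs the 1-form $\sum_{v\notin I}T_v\,\mathrm{d}(\ln k_v)$ to remain \emph{closed} on the boundary stratum with $k_i\equiv 0$ for $i\in I$; this is not a formal consequence of the interior case and requires the explicit identity $\partial l_1/\partial k_2=\partial l_2/\partial k_1$ at $k_3=0$ (Lemma \ref{lem:closed_form} and its appendix verification). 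And the continuity of $\widehat{\Phi}$ down to $k_i=0$ hinges on the $C^1$ extension across the $k_i=1$ wall (Proposition \ref{prop:arc_by_kd}, Theorem \ref{thm:T123}) so that the formula \eqref{eq:k123} really patches into a single continuous map; you correctly identify this asymptotic analysis as the crux but do not carry it out. Also, your phrase about ``angle sums around a vertex'' is slightly off: the strict inequality $\eta^J>0$ at the limit comes from Gauss--Bonnet applied to the dual triangle in each face of $F_J$, giving $T_u^f+T_v^f+T_w^f<\pi$ strictly, and then summing.
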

Note that by our convention, the result of \cite{BHS} for $\Phi$ is the case where $I=\varnothing$.

\begin{remark}
  Since the degenerated surfaces allow some hypercycles to become geodesics, they may be disconnected (see Remark \ref{rmk:disconnect}). These surfaces are also closed to \emph{crowned surfaces}, which arise as the complement of a general geodesic measured lamination on a hyperbolic surface. See \cite{CB, G19} for more details about crowned surfaces.   
\end{remark} 

\subsection{Combinatorial Ricci flow}
To find the actual degenerated generalized circle packed surface with prescribed total geodesic curvature $\vec{T}\in \partial_0 \Omega_T$, we take advantage of the combinatorial Ricci flow, which is similar to the work of Chow-Luo \cite{chow-Luo}. The flow is even defined for degenerated surfaces on certain strata.

Given possibly empty $J \subset I \subset V$ and $\vec{T}= (T_v)_{v \in V}\in \partial_0^I \Omega_T$, the \DEF{\cRf}\ on $\partial^J \Omega_k$ is defined by 
\begin{equation}\label{CRF1}
    \frac{\mathrm{d} k_v(t)}{\mathrm{d} t} = -\big(T_v(\vec{k}(t))-T_{v}\big) \cdot k_v(t),\ \quad \text{for any } v\in V \setminus J . 
\end{equation}
Here $\vec{k}(t) = (k_v(t))_{v\in V\setminus J}$ is the flow ray in $\partial^J \Omega_k$ over $t\geq0$, with initial data $\vec{k}(0)$.

\begin{theorem}\label{Main_result_3} 
For any $\vec{T}\in \partial_0^I \Omega_T$, $J\subset I$ and $\vec{k}(0)\in \partial^J \Omega_k$, the flow ray $\vec{k}(t)$ of (\ref{CRF1}) converges to $\widehat{\Phi}^{-1} (\vec{T}) \in \partial^I\Omega_k $ as $t\to +\infty$. 
\end{theorem}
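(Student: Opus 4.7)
The plan is to adapt the gradient-flow strategy of Chow-Luo \cite{chow-Luo}, Takatsu \cite{Takatsu}, and the authors' earlier paper \cite{HLSZ} to the degenerated setting. The key steps are global existence of the flow, its identification as a convex gradient descent, precompactness of the trajectory in $\overline{\Omega}_k$, and identification of the limit via Theorem \ref{Main_result_1}.

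First I would check that (\ref{CRF1}) defines a unique flow ray in $\partial^J\Omega_k$ for all $t\geq 0$. The vector field is locally Lipschitz, and integrating each component gives
\begin{equation*}
    k_v(t) = k_v(0)\exp\Bigl(-\!\int_0^t \bigl(T_v(\vec{k}(s))-T_v\bigr)\,\mathrm{d}s\Bigr),
\end{equation*}
so $k_v(t) > 0$ for $v \in V\setminus J$ while $k_v(t)\equiv 0$ for $v\in J$. Passing to logarithmic coordinates $u_v := \log k_v$ on $\partial^J\Omega_k$, the variational principle of \cite{BHS} yields a strictly convex primitive $\mathcal{F}(\vec{u})$ of the closed $1$-form $\sum_{v\in V\setminus J} T_v(\vec{k})\,\mathrm{d}u_v$ on $\mathbb{R}^{V\setminus J}$. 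Setting
\begin{equation*}
    \mathcal{E}(\vec{u}) := \mathcal{F}(\vec{u}) - \sum_{v\in V\setminus J} T_v\, u_v ,
\end{equation*}
equation (\ref{CRF1}) becomes $\dot{u}_v = -\partial\mathcal{E}/\partial u_v$, so $\mathcal{E}$ is strictly convex and non-increasing along the flow.

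The main obstacle is a coercivity analysis showing that sub-level sets of $\mathcal{E}$ have compact closure in $\overline{\Omega}_k$. When a subset of coordinates $\{k_v\}_{v\in I'}$ tends to $+\infty$, standard hyperbolic geometry forces $\sum_{v\in I'} T_v(\vec{k}) \to \pi|F_{I'}|$, and the defining condition $\eta^{I'}(\vec{T})>0$ of $\widehat{\Omega}_T$ then drives $\mathcal{E}\to+\infty$ along this escape direction. Analogously, if $k_v\to 0$ for some $v\in V\setminus I$, the penalty term $-T_v u_v$ forces $\mathcal{E}\to+\infty$ since $T_v>0$. Together these estimates confine the trajectory to a compact subset of $\overline{\Omega}_k$ whose only possible boundary accumulation is along $\partial^I\Omega_k$; in particular $\mathcal{E}$ is bounded below, and therefore
\begin{equation*}
    \int_0^{+\infty} \sum_{v\in V\setminus J} \bigl(T_v(\vec{k}(t))-T_v\bigr)^2\,\mathrm{d}t < +\infty .
\end{equation*}

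A LaSalle-type argument then concludes the proof. Any subsequential limit $\vec{k}_\infty$ of $\vec{k}(t)$ lies in $\partial^I\Omega_k$ and satisfies $T_v(\vec{k}_\infty) = T_v$ for every $v\in V\setminus I$, so by the injectivity of $\widehat{\Phi}$ in Theorem \ref{Main_result_1} one has $\vec{k}_\infty = \widehat{\Phi}^{-1}(\vec{T})$. Uniqueness of the subsequential limit, combined with the precompactness above, promotes this to full convergence as $t\to+\infty$.
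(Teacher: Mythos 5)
Your proposal takes a genuinely different route from the paper. You argue via a Lyapunov/LaSalle scheme: establish a lower bound for the convex energy $\mathcal{E}$, deduce $\int_0^\infty|\nabla\mathcal{E}|^2\,dt<\infty$ from energy dissipation, and identify the subsequential limit by injectivity of $\widehat{\Phi}$. The paper instead invokes a quantitative gradient-flow estimate for convex functions (its Lemma \ref{gradient_flow}, in the spirit of Takatsu and Gigli): for any reference point $\vec{\xi}^*$, $|\nabla h(\vec{\xi}(\tau))|^2 \le |\nabla h(\vec{\xi}^*)|^2 + \tau^{-2}|\vec{\xi}^*-\vec{\xi}(0)|^2$. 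It then chooses a moving auxiliary curve $\vec{k}^A(t)$ with $k^A_v(t)=1/t$ for $v\in I\setminus J$ and $k^A_v(t)=k^*_v$ otherwise, so that the first term goes to $0$ by continuity of $\widehat{\Phi}$ (Theorem \ref{Main_result_1}) and the second term is $O((\ln t)^2/t^2)\to 0$. This completely sidesteps any lower bound on the energy or any precompactness of sublevel sets.

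There is a genuine gap in your argument at the step ``\emph{in particular $\mathcal{E}$ is bounded below}''. Precompactness of the trajectory in $\overline{\Omega}_k$ does \emph{not} imply $\mathcal{E}$ is bounded below, because $\mathcal{E}$ lives on the logarithmic coordinates $\mathbb{R}^{V\setminus J}$: the expected limit $\widehat{\Phi}^{-1}(\vec{T})\in\partial^I\Omega_k$ corresponds to $u_v\to-\infty$ for $v\in I\setminus J$, and in exactly these directions your coercivity analysis is silent (since $T_v=0$ there, the penalty term $-T_v u_v$ vanishes, and you correctly only claim $\mathcal{E}\to+\infty$ in the other escape directions). The compact set in $\overline{\Omega}_k$ containing the trajectory has an unbounded preimage under $\ln$, so one must verify separately that $\mathcal{E}$ has a finite limit along the escape direction $u_v\to-\infty$ for $v\in I\setminus J$. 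This is true but requires a quantitative estimate such as $T_v(\vec{k})=O(k_v|\ln k_v|)$ as $k_v\to 0$, so that $\int^{-\infty} T_v(\vec{k})\,du_v$ converges and the convexity inequality $\mathcal{E}(\vec{u})\ge\mathcal{E}(\vec{u}_s)+\nabla\mathcal{E}(\vec{u}_s)\cdot(\vec{u}-\vec{u}_s)$ yields a uniform lower bound in the limit $s\to\infty$. Without that estimate, the integral $\int_0^\infty\sum_v(T_v(\vec{k}(t))-T_v)^2\,dt$ could a priori be infinite. (A secondary, smaller gap: passing from $\int|\nabla\mathcal{E}|^2\,dt<\infty$ to $|\nabla\mathcal{E}|\to 0$ uses the monotonicity of $|\nabla\mathcal{E}|$ along the gradient flow of a convex function, which you should at least invoke explicitly.)
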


Note that $\partial^I\Omega_k$ (or $\partial_0^I\Omega_T$ resp.) lies on the boundary of $\partial^J\Omega_k$ (or $\partial_0^J\Omega_T$ resp.) whenever $J\subset I$. See Figure \ref{fig:flow_line} for a comprehensive illustration of the flow rays in Theorem \ref{Main_result_3}. 

It has been shown in \cite{BHS} that for any $\vec{T}\in \Omega_T =\partial^\varnothing \Omega_T$ , the flow ray of (\ref{CRF1}) through any initial data $\vec{k}(0) \in \Omega_k$ converges exponentially fast to the point $\Phi^{-1}(\vec{T})$. The following corollary of Theorem \ref{Main_result_3}, by taking $J=\varnothing$, extends the result to the partial boundary $\partial_0 \Omega_T $ of $\Omega_T $, allowing flow rays in the interior converge at the boundary.

\begin{corollary}\label{Main_result_2} 
For any $\vec{T} \in \partial_0 \Omega_T$ and $\vec{k}(0) \in \Omega_k$, the flow ray from $\vec{k}(0)$ diverges in $\Omega_k$. 
However, it converges in $\overline{\Omega}_k = \mathbb{R}_{\geq0}^{V}$ as $t$ tends to $+\infty$ and the limit point is exactly $\widehat{\Phi}^{-1} (\vec{T}) \in \partial \Omega_k$. 
\end{corollary}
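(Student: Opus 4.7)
The plan is to derive this as essentially a specialization of Theorem \ref{Main_result_3} to the case $J = \varnothing$, together with two small observations: the flow ray cannot leave $\Omega_k$, and its limit lies on $\partial\Omega_k$.

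First, I would decompose the hypothesis. Since $\vec{T} \in \partial_0 \Omega_T = \widehat{\Omega}_T \setminus \Omega_T$, the stratification $\widehat{\Omega}_T = \bigcup_{I\subset V} \partial_0^I \Omega_T$ provides a unique non-empty $I \subset V$ (namely $I = \{ i \in V : T_i = 0\}$) with $\vec{T} \in \partial_0^I \Omega_T$. The initial point $\vec{k}(0) \in \Omega_k = \partial^\varnothing \Omega_k$ corresponds to the choice $J = \varnothing$, and since $\varnothing \subset I$, the hypotheses of Theorem \ref{Main_result_3} are satisfied.

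Next, I would verify that the flow ray $\vec{k}(t)$ in fact stays in $\Omega_k$ for all $t \geq 0$. This is immediate from the form of (\ref{CRF1}): the equation is $\dot{k}_v(t) = -(T_v(\vec{k}(t)) - T_v) \, k_v(t)$, which is linear in $k_v$ with a time-dependent coefficient. Hence whenever $k_v(0) > 0$, the ray $k_v(t)$ remains strictly positive for all $t\geq 0$, so $\vec{k}(t) \in \Omega_k$ throughout. Applying Theorem \ref{Main_result_3} with $J = \varnothing$ then yields $\vec{k}(t) \to \widehat{\Phi}^{-1}(\vec{T}) \in \partial^I \Omega_k$ as $t \to +\infty$, with convergence taking place in $\overline{\Omega}_k = \mathbb{R}_{\geq 0}^V$. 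This gives the convergence half of the corollary.

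Finally, to obtain the divergence statement in $\Omega_k$, I would observe that because $I$ is non-empty, the limit $\widehat{\Phi}^{-1}(\vec{T})$ has at least one coordinate equal to zero and therefore does not lie in $\Omega_k = \mathbb{R}_{>0}^V$. Since convergence in $\overline{\Omega}_k$ to a boundary point combined with $\vec{k}(t) \in \Omega_k$ for all $t$ forces the ray to eventually leave every compact subset of $\Omega_k$, the ray diverges in $\Omega_k$ in the usual sense. There is no genuine obstacle here: all the analytic content — monotonicity of the energy functional, compactness, convergence at the boundary stratum — has been absorbed into Theorem \ref{Main_result_3}, and the corollary amounts only to translating its $J = \varnothing$ case into the language of divergence versus boundary convergence.
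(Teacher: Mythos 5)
Your proposal is correct and matches the paper's approach: the paper presents Corollary \ref{Main_result_2} as an immediate consequence of Theorem \ref{Main_result_3} with $J=\varnothing$, and your argument simply fills in the two small omitted details (positivity of $k_v(t)$ from the exponential form of the solution, and that $I\neq\varnothing$ forces the limit $\widehat{\Phi}^{-1}(\vec{T})$ to have a vanishing coordinate, hence the divergence in $\Omega_k$).
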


\begin{figure}
    \centering
    \includegraphics[scale=0.85]{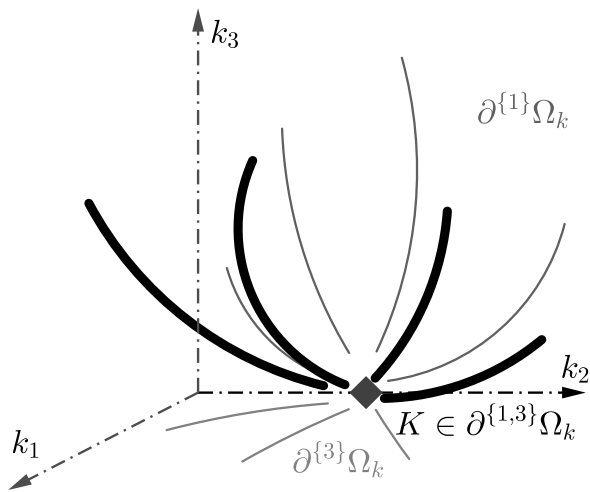}
    \caption{\small A schematic picture of flow rays. The limit point $K$, marked as a diamond, lies on $\partial^{\{1,3\}}\Omega_k$, which is a common boundary of two disjoint strata $\partial^{\{1\}}\Omega_k, \partial^{\{3\}}\Omega_k$. The thin lines lie completely in lower-dimensional strata, while the thick lines are in $\Omega_k$.}
    \label{fig:flow_line}
\end{figure}

\bigskip
\noindent\textbf{Organization} This paper is organized as follows. In Section \ref{2}, we review the settings of three-circle configuration and generalized circle packed surfaces. 
In Section \ref{3}, we introduce degenerated hyperbolic circle packed surfaces. 
In Section \ref{4}, we show the continuous extension of the homeomorphism $\Phi: \mathbb{R}_{>0}^{V} \to \Omega_T$, which finishes the proof of Theorem \ref{Main_result_1}. 
In Section \ref{5}, we prove the convergence of a combinatorial Ricci flow with prescribed total geodesic curvatures on the partial boundary of $\partial_0 \Omega_T$. 
The Appendix A provides some detailed computations on hyperbolic geometry used here, especially in Section \ref{3} and \ref{4}.

\section{The hyperbolic circle packed surfaces}\label{2}

\subsection{Three-circle configuration}\label{setting}

In this section, we draw upon some fundamental concepts on the hyperbolic geometry. Also see the Appendix for more quantitative computations.

A \DEF{generalized circle} in $\mathbb{H}^2$ is a circle of constant geodesic curvature with respect to the hyperbolic metric. 
A generalized circle is either a circle, a horocycle or a hypercycle. 
To generalise center from circles, we define the \DEF{center of a horocycle} as the unique limit point on the boundary and the \DEF{center of a hypercycle} as the geodesic connecting its two limit points on the boundary (See Figure \ref{fig:circles}).  
Using the hyperboloid and Klein model for $\mathbb{H}^2$, generalized circles have a more quantative uniform definition (See Lemma \ref{lem:gnr_circle}).

We will focus on arcs of generalized circles. The inner angle of a hypercycle arc is defined as follows:
Let $\Gamma$ be a hypercycle with geodesic axis $\gamma$ and radius $r$, and $C$ be an arc on $\Gamma$ with endpoints $c_1,c_2$. 
Let the shortest distance projection of $C$ onto the geodesic $\gamma$ be $\alpha$. Then the hyperbolic length of $\alpha$ is defined to be the (generalized) \DEF{inner angle} $\theta$ of arc $C$. See the left of Figure \ref{fig:circles}.
The relations among the radii and absolute geodesic curvature of generalized circle, and the inner angle and the length of arc of the generalized circle, are summarized in Table \ref{table:relation}. 

\begin{figure}[htbp]
    \centering
    \includegraphics[width=\textwidth]{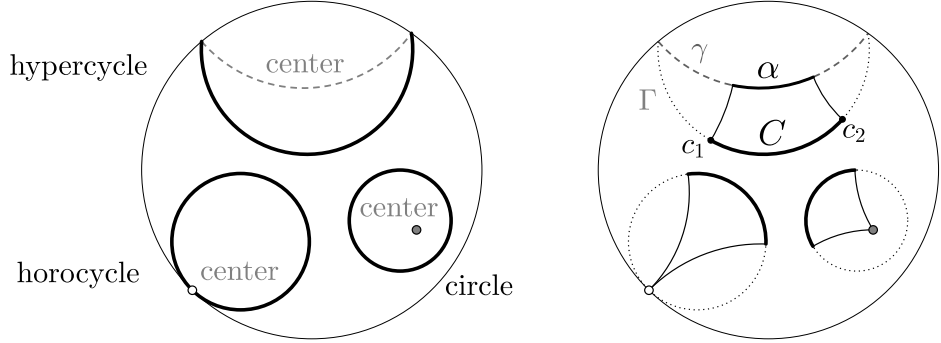}
    \caption{\small A circle, a hypercycle and a horocycle on $\mathbb{H}$.}
    \label{fig:circles}
\end{figure}

\begin{definition}[circle packed generalized triangle] \label{def:three_circle}
    Let $C_1, C_2, C_3$ be three generalized circles externally tangent to each other, arranged counterclockwise in $\mathbb{H}^2$.
    \begin{enumerate}
        \item For $i \neq j \in \{1,2,3\}$, the \DEF{edge} $e_{ij}=(v_i v_j)$ is the geodesic connecting the centers of $C_i$ and $C_j$, passing through the point tangency. When some of them is hypercycle, the edge must be perpendicular to its center geodesic.
        \item For $i \in \{1,2,3\}$, the \DEF{vertex} $v_i$ is the center of $C_i$ if $C_i$ is a circle or a horocycle. 
        If $C_i$ is a hypercycle, the vertex $v_i$ is defined as the geodesic segment between the two edges $(v_i v_j)$ and $(v_i v_k)$ where $j \neq k$ are indexes other than $i$. 
        \item A \DEF{generalized (geodesic) triangle}, denoted by $\Delta$, 
        is the closed hyperbolic polygon bounded by the three vertices $v_1, v_2, v_3$ and three edges $e_{12}, e_{23}, e_{31}$. 
        \item The three generalized circles, together with the generalized geodesic triangle, is called a \DEF{three-circle configuration}. 
        The pattern inside the closed polygon is called a \DEF{circle packed generalized triangle}.
        \item A circle packed generalized triangle is called \DEF{of curvature $(k_1, k_2, k_3)$} if the geodesic curvature of $C_i$ is constantly $k_i$ for $i = 1, 2, 3$. 
    \end{enumerate}
\end{definition}

\begin{definition}[Dual triangle]
Given a three-circle configuration with generalized triangle $\Delta$, let $t_{ij}$ be the tangent point of $C_i, C_j$ on the edge $e_{ij}=(v_i v_j)$.
The \DEF{dual triangle} $\Delta^*$ is a non-geodesic triangle inside $\Delta$, bounded by the three circles, with vertices $t_{12}, t_{23}, t_{31}$. 
\end{definition}
See Figure \ref{fig:general3circle} as an example of all the concepts involved.

\begin{table}\centering
\centering
\renewcommand\arraystretch{1.2}
\footnotesize
\begin{tabular}
{l|l|l|l}
 &  \small Radius & \renewcommand\arraystretch{0.9}\begin{tabular}{l} Absolute value of \\ geodesic curvature \end{tabular}  &  \small  Arc length \\ \hline
 \small Circle &  $0<r<\infty$&  $k=\coth r$&  $l=\theta\sinh r$  \\ 
 \small Horocycle &  $r=\infty$&  $k=1$&  -----  \\
 \small Hypercycle &  $0<r<\infty$& $k=\tanh r$ &  $l=\theta\cosh r$  \\ \hline
\end{tabular}
\caption{ \small The relationship of radius, absolute value of geodesic curvature and arc length. }
\label{table:relation}
\end{table}

\begin{lemma}\label{L23}
Given a triple $\vec{k} = (k_1, k_2, k_3) \in \mathbb{R}_{>0}^3$, there exists a unique three-circle configuration (or circle packed triangle) (up to isometry) such that the geodesic curvature of $C_i$ is exactly $k_i$ for $i = 1, 2, 3$.
\end{lemma}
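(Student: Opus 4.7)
The plan is to transfer the problem into the Minkowski model $\Lrz$ of $\mathbb{H}^2$, where both generalized circles and external tangency admit a uniform linear-algebraic description. Using Lemma~\ref{lem:gnr_circle} together with the formulae derived in Appendix~A, I would represent each generalized circle $C_i$ of curvature $k_i > 0$ by a vector $N_i \in \Lrz$ satisfying $\langle N_i, N_i \rangle = 1 - k_i^2$, with $C_i$ recovered as $\{x \in \mathbb{H}^2 : \langle x, N_i \rangle = -k_i\}$; in this normalisation $N_i$ is timelike, lightlike, or spacelike according as $k_i > 1$, $k_i = 1$, or $k_i < 1$. A short case analysis over the three circle types then yields the uniform external tangency criterion $\langle N_i, N_j \rangle = -(1 + k_i k_j)$.

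With this identification, the existence and uniqueness of a three-circle configuration with curvature vector $\vec{k}$ becomes the question of realising the fixed Gram matrix
\[
G(\vec{k}) \;=\; 2I - u u^T - \mathbf{1}\mathbf{1}^T, \qquad u = (k_1,k_2,k_3)^T,\ \mathbf{1}=(1,1,1)^T,
\]
as the inner-product matrix of an ordered triple $(N_1, N_2, N_3)$ in $\Lrz$. By Witt's extension theorem, such a triple exists and is unique up to the action of $\mathrm{O}(2,1) = \mathrm{Isom}(\mathbb{H}^2)$ provided $G(\vec{k})$ is non-degenerate of signature $(2,1)$; the counterclockwise ordering then pins down the residual sign ambiguity, and the circle packed triangle is canonically attached to the three circles via Definition~\ref{def:three_circle}.

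To verify the signature, observe that $uu^T + \mathbf{1}\mathbf{1}^T$ is positive semi-definite of rank at most two, with two eigenvalues $\lambda_1, \lambda_2 \ge 0$ on $\mathrm{span}\{u, \mathbf{1}\}$, while its kernel contributes the eigenvalue $2$ to $G(\vec{k})$. A short computation gives $\lambda_1 + \lambda_2 = 3 + \sum_i k_i^2$ and $\lambda_1 \lambda_2 = 3\sum_i k_i^2 - (\sum_i k_i)^2$, and hence
\[
(2-\lambda_1)(2-\lambda_2) \;=\; -2\bigl(1 + k_1 k_2 + k_1 k_3 + k_2 k_3\bigr) \;<\; 0 \qquad \text{for all } \vec{k}\in \mathbb{R}^3_{>0}.
\]
Thus the eigenvalues of $G(\vec{k})$ have signs $(+,+,-)$, the signature is exactly $(2,1)$, and the lemma follows.

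I expect the main obstacle to be the uniform tangency formula asserted in the first step: establishing $\langle N_i, N_j \rangle = -(1+k_i k_j)$ requires treating all mixed pairings of circles, horocycles, and hypercycles in a single framework, and it relies on the explicit Minkowski-model computations provided in Appendix~A. Once this identity is in place, the remainder reduces to the short signature calculation above.
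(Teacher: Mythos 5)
Your argument is correct in outline and takes a genuinely different route from the paper. The paper proves Lemma~\ref{L23} by a soft topological argument: it fixes two of the circles and deforms a one-parameter family of candidate third circles, applying the intermediate value theorem to the intersection number of $C_2$ and $C_3$. You instead work in the Lorentzian model, encode each $C_i$ by a normal $N_i$ with $\langle N_i,N_i\rangle = 1-k_i^2$ and $\langle N_i,N_j\rangle = -(1+k_ik_j)$, and reduce the lemma to realizing a fixed $3\times 3$ Gram matrix in $\Lrz$. The signature computation is correct: with $G=2I-uu^T-\mathbf{1}\mathbf{1}^T$, the nonzero eigenvalues $\lambda_1,\lambda_2$ of $uu^T+\mathbf{1}\mathbf{1}^T$ satisfy $\lambda_1+\lambda_2=3+\sum k_i^2$ and $\lambda_1\lambda_2=3\sum k_i^2-(\sum k_i)^2$, yielding $(2-\lambda_1)(2-\lambda_2)=-2(1+k_1k_2+k_2k_3+k_3k_1)<0$, hence signature $(2,1)$. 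Witt's theorem then gives existence and uniqueness of $(N_1,N_2,N_3)$ up to $\mathrm{O}(2,1)$. What each approach buys: the paper's is elementary and avoids any algebraic machinery; yours is more explicit, extends uniformly to the degenerate case of Lemma~\ref{L123} (where some $k_i=0$ and $G$ still has signature $(2,1)$ provided the product of curvatures condition holds), and produces closed-form data (indeed $t_{ij}=(N_i+N_j)/(k_i+k_j)$) that feeds directly into the computations of Appendix~A.

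One detail you should spell out beyond the tangency formula: passing from ``$\mathrm{O}(2,1)$-uniqueness of $(N_1,N_2,N_3)$'' to ``$\mathrm{Isom}(\mathbb{H}^2)$-uniqueness of the configuration'' requires handling the index-two discrepancy between $\mathrm{O}(2,1)$ and $\mathrm{O}^+(2,1)$. When some $k_i\ge 1$ the normal $N_i$ is time- or light-like and nonemptiness of $C_i$ forces it into the upper cone, so the ambiguity disappears automatically. When all $k_i<1$ (three hypercycles) both $\pm N_i$ give valid hypercycles, but the requirement that the tangency points $t_{ij}=(N_i+N_j)/(k_i+k_j)$ lie on the upper sheet forces $(N_i)_3+(N_j)_3>0$, which again kills the sign ambiguity. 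It is this nonemptiness/valid-tangency normalisation, rather than the counterclockwise ordering, that pins down the residual $\pm I$; the counterclockwise ordering is what lets you quotient by reflections (or not) depending on the convention for ``up to isometry.''
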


\begin{proof}
    Let $C_1$ and $C_2$ be two tangent circles of curvature $k_1$ and $k_2$. 
    Denote by $t_{1,2}$ the tangent point of $C_1$ and $C_2$. 
    Let $C$ be a circle of curvature $k$ which passes $t_{1,2}$ and is perpendicular to both $C_1$ and $C_2$. 
    Take $C_3$ as the circle of curvature $k_3$ tangent to $C_1$ and perpendicular to $C$. 
    Consider the intersection number $I(k)$ of $C_2$ and $C_3$ which is an increasing function of $k$. 
    There exist $k_2 > k_1 > 1$ such that the intersection number $I(k_1) = 0$ and $I(k_2) = 2$. 
    Then there exists a unique $k_0 \in (k_1, k_2)$ such that the intersection number $I(k_0)$ is equal to $1$. 
    The lemma follows. 
\end{proof}

\begin{figure}[h]
	\centering
	\includegraphics[width=0.45\linewidth]{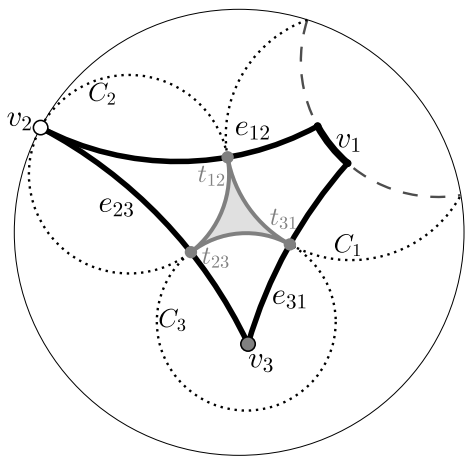}
	\caption{\small A typical three-circle configuration and a circle packed generalized triangle. The triangle is of curvature $(k_1,k_2,k_3)$ with $0<k_1<1=k_2<k_3$, hence the vertex $v_1$ is a geodesic segment while $v_2$ is an ideal vertex. The shaded region is its dual triangle.}
	\label{fig:general3circle}
\end{figure}

\subsection{Generalized circle packed hyperbolic surface}\label{setting_surface}

Let $S$ be a oriented compact surface and $G$ be a simple triangulation. 
Denote by $V$, $E$ and $F$ the set of vertices, edges and faces of $G$ respectively. 
All $f\in F$ are counterclockwise oriented, with respect to the orientation of $S$.
An edge $e\in E$ and a positively oriented triangle $f\in F$ are also represented by their vertices as $e=(ij)$ and $f=(i j k)$.

\begin{definition}\label{def:circ.pack.surf.}
Given $(S,G)$ as above, for any $\vec{k} = (k_v)_{v \in V} \in \Omega_k$ we construct a hyperbolic surface as follows: 
\begin{itemize}
    \item For any $f = (u v w) \in F (G)$, realize $f$ as a circle packed generalized triangle $\Delta_f$ of curvature $(k_u, k_v, k_w)\in \Omega_k^\Delta$. 
    \item If $f_1, f_2 \in F$ share the same edge $e=(ij)\in E$, glue the two generalized triangles $\Delta_{f_1}, \Delta_{f_2}$ along the edges corresponding to $e$ by isometry, such that the vertices of two dual triangles on these edges coincide and the tangent points of two circles on these edges coincide. 
    \item The surface obtained by gluing all edges of $\Delta_f$'s as above is called a \DEF{generalized circle packed surface} with nerve $G$ and curvature $\vec{k}$.
    \item On a generalized circle packed surface, for any $v \in V$, all circle arcs centered at vertex $v$ form a closed circle $C_v$. The collection $\{C_{v}\}_{v \in V}$ is called a \DEF{generalized circle packing} on that surface. 
\end{itemize}
\end{definition}
By the construction, all generalized circle packed surfaces with nerve $G$ are parameterized by $\Omega_k$. Hence when the surface and triangulation $(S,G)$ is given, we will simply represent a surfaces by a point $\vec{k}\in\Omega_k$.

\begin{remark}
    We omit all the adjective ``generalized'' from now on for simplicity.
\end{remark}

\begin{remark}
    \label{rmk:decomp.vert.set.}
For any $\vec{k}=(k_v)_{v\in V}\in\Omega_k$, there is a partition of $V$ given by
\[ 
V_B:=\{\ i\in V\ \lvert\ 0<k_i<1\ \},\quad 
V_P:=\{\ i\in V\ \lvert\   k_i=1\ \},\quad 
V_C:=\{\ i\in V\ \lvert\   k_i>1\ \}. 
\]
Then topologically, the circle packed surface with curvature $\vec{k}$ is obtained by removing $\lvert V_P \rvert$ points and $\lvert V_B \rvert$ open disks, all disjoint.
Geometrically, the surface contains $\lvert V_B \rvert$ closed geodesic boundaries, $\lvert V_P \rvert$ cusps and $\lvert V_C \rvert$ conical points (possibly of cone angle $2\pi$), and is smooth on the complementary. See an example in Figure \ref{fig:gnr_pack_surf}.
\end{remark}

\begin{figure}
    \centering
    \includegraphics[width=\linewidth]{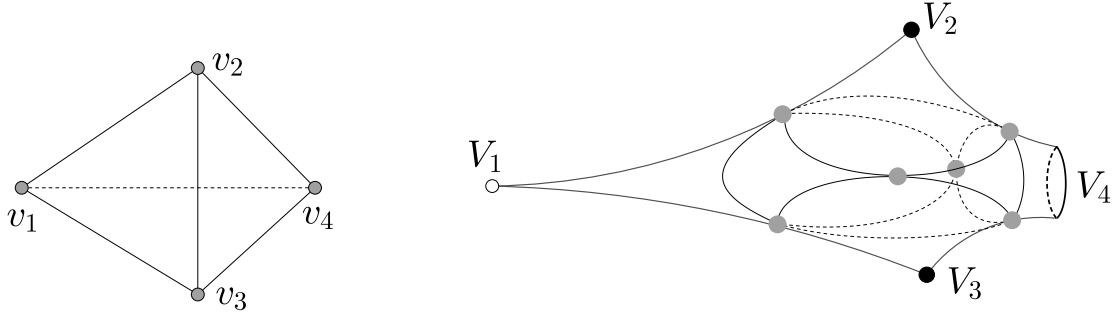}
    \caption{\small A typical generalized circle packed surface, where $G$ is the tetrahedron triangulation of the sphere. The surface has one cusp corresponding to $v_1$, a totally geodesic boundary corresponding to $v_4$, and two conical points.}
    \label{fig:gnr_pack_surf}
\end{figure}

\medskip
For $\vec{k}=(k_v)_{v\in V}\in\Omega_k$ and $f=({u} {v} {w}) \in F$, let 
$l_{v}^{f} (\vec{k})$ be the hyperbolic length of the circle arc $C_{v}^{f}$ centered at vertex $i$ in the triangle $\Delta_f$ of curvature $(k_u, k_v, k_w)$. The \DEF{total geodesic curvatures} of arc $C_{v}^{f}$ is defined as
\begin{equation}\label{def:T.g.c_v^f}
    T_{v}^{f} (\vec{k}) := l_{v}^{f} (\vec{k}) \cdot k_{v}.
\end{equation}
Recall that $F_{\{v\}}$ is the set of triangles containing $v\in V$ as one of its vertex. Then the \DEF{total geodesic curvatures} of $C_v$ on the circle packed surface with curvature $\vec{k}$ is defined as
\begin{equation}\label{def:T.g.c_v}
      T_v (\vec{k}):= \sum_{f\in F_{\{v\}}} T_v^{f} (\vec{k}).
\end{equation}

The following result tells us that all circle packed surfaces with given nerve $G$ can be reparametrized by total geodesic curvatures.
\begin{theorem}[\cite{BHS}]\label{mr}
The image of the map $\Phi$ defined by (\ref{def:T.g.c_v}) and (\ref{map:Phi_T}) is precisely $\Omega_T$ in (\ref{Omega_T}). And $\Phi:\Omega_k \to \Omega_T$
is a homeomorphism.
\end{theorem}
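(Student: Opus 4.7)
The plan is to prove Theorem \ref{mr} by the standard variational principle, verifying injectivity via strict convexity and surjectivity via a continuity/degree argument. First I would introduce logarithmic coordinates $u_v = \log k_v$ on $\Omega_k$ (or another reparametrization that is smooth across the horocyclic threshold $k=1$) and, for a single circle packed triangle $\Delta_f$ of curvature $(k_u,k_v,k_w)$, verify the symmetry $\partial T_i^f/\partial u_j = \partial T_j^f/\partial u_i$. This makes the 1-form $\omega = \sum_{v\in V} T_v \, du_v$ closed on $\Omega_k$, so that a potential $\mathcal{F}$ with $\nabla \mathcal{F} = \Phi$ exists. A per-triangle Hessian computation, using Lemma \ref{L23} and Table \ref{table:relation} to express $l_i^f$ and hence $T_i^f$ as explicit $C^1$ functions of $(k_u,k_v,k_w)$, should then show that the local Hessian is positive definite; summing over faces yields strict convexity of $\mathcal{F}$ on $\Omega_k$, whence $\Phi$ is globally injective and a local diffeomorphism.

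Second, I would establish the inclusion $\Phi(\Omega_k) \subset \Omega_T$ by applying Gauss--Bonnet to each dual triangle $\Delta_f^*$. Its three corners are cusps formed by tangential meetings of the circle arcs, each contributing exterior angle $\pi$; the boundary of $\Delta_f^*$, traversed with the induced orientation, has geodesic curvature $-k_i$ along the arc $C_i^f$ (the disks bounded by $C_i$ lie outside $\Delta_f^*$). Gauss--Bonnet in $\mathbb{H}^2$ then yields
\begin{equation*}
\mathrm{Area}(\Delta_f^*) \;=\; \pi - \bigl(T_u^f + T_v^f + T_w^f\bigr) \;>\; 0.
\end{equation*}
Summing this inequality over $f \in F_I$ and regrouping by vertices, using that every face in $F_I$ contains at least one vertex of $I$, gives $\sum_{i\in I} T_i < \pi|F_I|$, which is exactly $\eta^I(\Phi(\vec{k})) > 0$. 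So $\Phi(\Omega_k) \subset \Omega_T$.

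Third, to show $\Phi$ maps onto $\Omega_T$, I would carry out a boundary/properness analysis: if a sequence $\vec{k}^{(n)} \in \Omega_k$ exits every compact subset of $\Omega_k$, I claim $\Phi(\vec{k}^{(n)})$ either leaves every compact subset of $\mathbb{R}_{>0}^V$ or converges into $\partial_0\Omega_T$, i.e., some $\eta^{I}$ tends to $0$. The key case-work distinguishes coordinates that blow up (where $T_v^{(n)} \to +\infty$ or at least the configuration degenerates so a tangent-point/cusp Gauss--Bonnet equality becomes an equality in the limit) from coordinates that collapse to $0$ (where the corresponding $T_v^{(n)} \to 0$ and an $\eta^I$ with $I$ containing such indices vanishes). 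Combining this properness with injectivity and local diffeomorphy via invariance of domain, $\Phi(\Omega_k)$ is both open and closed in the connected set $\Omega_T$, hence equals $\Omega_T$. Continuity of $\Phi^{-1}$ is automatic for a continuous bijection between manifolds of the same dimension with continuous local inverse, completing the homeomorphism claim.

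The principal obstacle is the strict convexity of $\mathcal{F}$: the Hessian must be controlled uniformly across the three regimes $k_i<1,\ k_i=1,\ k_i>1$, and it is most delicate near $k_i=1$ (the horocyclic transition). I would handle this by writing $T_i^f$ as a function of the generalized inner angle $\theta_i$ introduced in Section \ref{2} — which by Table \ref{table:relation} is smooth across $k_i=1$ — and reducing positive definiteness to a symmetric algebraic inequality involving the three arc derivatives, similar to the identities in \cite{chow-Luo} and \cite{BHS}. The boundary limit analysis is the other technical hurdle, but it parallels the strata decomposition already set up in the introduction and will be expanded in Sections \ref{3}--\ref{4} of the paper.
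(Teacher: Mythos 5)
Your overall plan is sound and yields a correct proof, but your surjectivity argument takes a genuinely different route from the one this paper uses. You prove surjectivity by properness of $\Phi$ (analyze divergent sequences $\vec{k}^{(n)}$, show $\Phi(\vec{k}^{(n)})$ escapes $\Omega_T$, then conclude by an open-plus-closed topological argument using the local diffeomorphism property from the Hessian). The paper instead, in \S\ref{4} (Propositions \ref{prop:image} and \ref{prop:co} with $I=\varnothing$), follows Bobenko--Springborn: it introduces \emph{coherent systems} (an independent choice of per-face targets $T_v^f$ subject only to $\sum_v T_v^f<\pi$), shows via the Feasible Flow Theorem that every $\vec{T}\in\Omega_T$ admits a coherent lift, and then realizes the lift as a critical point of a modified, coercive convex energy $\widetilde{\mathcal{E}}$. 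Your properness approach is the more classical analytic-topology route and needs the asymptotic estimates (the paper's Lemma \ref{lem:limit} and Proposition \ref{prop:cntnu_extd}) up front; the paper's approach decouples ``does the combinatorial data $\vec{T}$ admit a splitting?'' (pure linear programming) from ``does each split triangle exist?'' (a finite-dimensional convex minimization per face), which is what lets them uniformly treat the degenerate strata in the same framework. Both proofs share the variational setup in log-coordinates, the strict-convexity/closed-$1$-form mechanism for injectivity, and the Gauss--Bonnet computation $\mathrm{Area}(\Delta_f^*)=\pi-(T_u^f+T_v^f+T_w^f)$ for $\Phi(\Omega_k)\subset\Omega_T$.

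One minor directional slip in your case analysis for properness: you have the two degeneration mechanisms swapped. When some $k_v^{(n)}\to+\infty$, the quantities $T_v^{(n)}$ stay bounded (each $T_v^f<\pi$, so $T_v<\pi\lvert F_{\{v\}}\rvert$); what degenerates is $\eta^{J}(\Phi(\vec{k}^{(n)}))\to 0$ for $J$ the set of blow-up indices. Conversely when $k_v^{(n)}\to 0$ with the rest bounded, $T_v^{(n)}\to 0$, so the image exits the open orthant $\mathbb{R}_{>0}^V$; the associated $\eta^I$ actually \emph{increases}, it does not vanish. Your top-level claim (image leaves every compact subset of $\Omega_T$ either by exiting $\mathbb{R}_{>0}^V$ or by some $\eta^J\to 0$) is correct; only the parenthetical descriptions of which quantity degenerates in which case are reversed.
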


\section{The degenerated hyperbolic circle packed surfaces}\label{3}

To precisely define the points in $\partial\Omega_k$, we introduce generalized circles with zero geodesic curvature. We begin with three-circle configurations, allowing some of the generalized circles to be geodesics, and then glue these configurations together to form hyperbolic surfaces analogous to hyperbolic circle-packed surfaces.

\subsection{Degenerated circle packed triangle}\label{3.1}
Degenerated three-circle configurations are defined similarly to three-circle configurations in Definition \ref{def:three_circle}. 
\begin{definition}\label{def:dgn_triangle}
    A \DEF{degenerated three-circle configuration} is a three-circle configuration where some generalized circles degenerate into total geodesics of $\mathbb{H}^2$. The pattern obtained in (4) of Definition \ref{def:three_circle} is called a \DEF{degenerated circle-packed triangle}, with edges possibly degenerating to ideal points. The \DEF{dual triangle} is defined similarly, possibly having ideal vertices.
\end{definition}

\begin{lemma}\label{L123}
    Given $\vec{k}=(k_1, k_2, k_3) \in \partial \mathbb{R}_{\geq0}^3$, there exists a unique (up to isometry) degenerated three-circle configuration (or degenerated circle-packed triangle)  such that the geodesic curvature of $C_i$ is exactly $k_i$ for $i = 1, 2, 3$.
\end{lemma}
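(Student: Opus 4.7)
The plan is to extend the construction in the proof of Lemma~\ref{L23} to the boundary $\partial \mathbb{R}_{\geq 0}^3$. First I would set conventions for the degenerate case. A generalized circle with geodesic curvature $0$ is a complete geodesic of $\mathbb{H}^2$, viewed as the limit of hypercycles whose radius tends to $0$ (see Table~\ref{table:relation}). Two such generalized circles are \emph{externally tangent} when they meet, possibly at an ideal point, with a common tangent direction and lie in complementary regions near this point: two geodesics are tangent iff they are asymptotic, while a geodesic is tangent to a circle, horocycle or hypercycle iff they meet at one point with matching tangent direction. With this convention the notions of edge, vertex, generalized triangle and dual triangle of Definition~\ref{def:three_circle} extend verbatim, although some vertices may become ideal points and some edges may collapse.

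For existence I would use a perturbation/limit argument. Given $\vec{k}\in\partial\mathbb{R}_{\geq 0}^3$, pick a sequence $\vec{k}^{(n)}\in\mathbb{R}_{>0}^3$ with $\vec{k}^{(n)} \to \vec{k}$ and apply Lemma~\ref{L23} to obtain three-circle configurations $(C_1^{(n)}, C_2^{(n)}, C_3^{(n)})$. Normalize each by an isometry of $\mathbb{H}^2$, for instance by fixing the tangent point $t_{12}^{(n)}$ and the unit tangent direction along the edge $e_{12}^{(n)}$ at a chosen basepoint, and pass to a subsequential limit using compactness of the space of generalized circles of uniformly bounded geodesic curvature in the hyperboloid or Klein model. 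Whenever $k_i^{(n)}\to 0$, Table~\ref{table:relation} shows that the radius of the hypercycle $C_i^{(n)}$ tends to $0$, so $C_i^{(n)}$ collapses onto its axis; the tangency conditions survive in the limit, yielding a degenerated three-circle configuration realizing $\vec{k}$.

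For uniqueness I would do a case analysis according to $\#\{i : k_i = 0\}$. The case of no zero entries is Lemma~\ref{L23}. If exactly one entry vanishes, say $k_1 = 0$, the pair $(C_2, C_3)$ is unique up to isometry by the usual hyperbolic rigidity, and then $C_1$ is the unique common tangent geodesic to $C_2$ and $C_3$ on the side opposite to $t_{23}$, the side being fixed by the counterclockwise ordering. If $k_1 = k_2 = 0$ and $k_3 > 0$, after normalizing so that $C_1$ and $C_2$ meet at a specified ideal point, the generalized circle $C_3$ of curvature $k_3$ tangent to both is unique by the rigidity of inscribed circles/horocycles/hypercycles in an ideal wedge. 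If all three vanish, uniqueness reduces to the classical fact that any two ideal triangles in $\mathbb{H}^2$ are isometric.

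The main obstacle is the bookkeeping in the limit argument when several $k_i^{(n)} \to 0$ simultaneously: one must ensure that all relevant tangent points and axes stay in a compact region after the chosen normalization, and that no unexpected collapse occurs, for instance a vertex escaping to infinity without its adjacent edges degenerating consistently. The explicit formulas from the Appendix for edge lengths and inner angles in terms of $(k_1,k_2,k_3)$, together with the continuous dependence on $\mathbb{R}_{>0}^3$ already provided by Lemma~\ref{L23}, should reduce the analysis to the case check above.
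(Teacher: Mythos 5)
Your proposal takes a genuinely different route from the paper. The paper handles each boundary stratum by a direct construction in the style of Lemma~\ref{L23}: for one zero curvature it reuses the orthogonal-circle / intersection-number argument verbatim; for two zeros it parametrizes the tangent point of $C_3$ along the geodesic $C_1$ and again finds the unique tangency position by monotonicity of an intersection number; and for three zeros it invokes the ideal triangle. You instead propose existence by a compactness/limit argument (approximating $\vec{k}$ from the interior and normalizing) and uniqueness by a case-by-case rigidity analysis. The uniqueness half of your argument is essentially sound, and the case split matches the structure of the statement.

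However, the existence half has a genuine gap in the normalization. You propose to kill the isometry freedom by ``fixing the tangent point $t_{12}^{(n)}$ and the unit tangent direction along the edge $e_{12}^{(n)}$.'' When $k_1^{(n)}\to 0$ and $k_2^{(n)}\to 0$ simultaneously, the tangent point $t_{12}^{(n)}$ leaves every compact subset of $\mathbb{H}^2$ (its limit is an ideal point), so this normalization cannot hold the configurations in a compact region and the subsequential limit you want does not exist under it. If only one curvature vanishes you could instead anchor at a different tangent point, but in the all-geodesic case every $t_{ij}^{(n)}$ escapes. A working normalization must be built from data that survive degeneration, for example fixing the center of the circle $C_P$ orthogonal to all three circles (Corollary~\ref{coro:k_P} shows $k_P$ stays bounded away from $1$ and $\infty$ as $\vec{k}^{(n)}\to\vec{k}$), or equivalently a point interior to the dual triangle, which never collapses (as the paper uses in Proposition~\ref{prop:cntnu_extd}). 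You flag this as an ``obstacle'' but do not resolve it; as written, the existence argument does not go through in the stratum where two or three of the $k_i$ vanish.
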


\begin{proof}
    It is sufficient to assume that $k_1 \leq k_2 \leq k_3$. 
    The case that $0 = k_1 < k_2 \leq k_3$ follows from the same argument of Lemma \ref{L23}. 

    Consider the case that $0 = k_1 = k_2 < k_3$. 
    Let $C_1$ be the geodesic joining $-1$ and $1$ in $\mathbb{H}^2$, and let $C_2$ be the geodesic joining $-1$ and $-i$ in $\mathbb{H}^2$. 
    For $s \in (-1, 1)$, let $C_3 (s)$ be the circle of curvature $k_3$ tangent to $C_1$ at $s$. 
    The intersection number $I (s)$ between $C_2$ and $C_3 (s)$ is a decreasing function of $s \in (-1, 1)$. 
    There exist $-1 < s_1 < s_2 < 1$ such that $I (s_1) = 2$ and $I (s_2) = 0$. 
    Then there exists $s_0 \in (s_1, s_2)$ such that $I (s_0) = 1$. 
    The lemma holds for $k_1 = k_2 = 0$. 

    If $k_1=k_2=k_3=0$, the degenerated circle-packed triangle is an ideal hyperbolic triangle.

\end{proof}
Lemma \ref{L123} is illustrated in Figure \ref{fig:dgn_3circle}.
\begin{figure}[hbt]
    \centering
    \includegraphics[width=\linewidth]{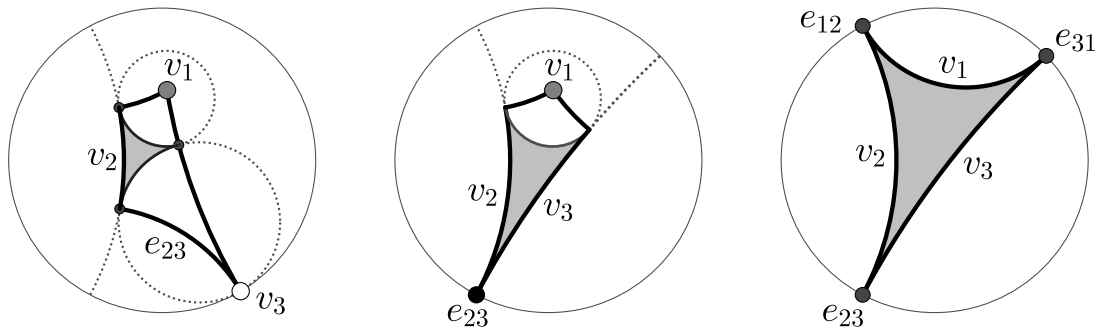}
    \caption{\small 
    Consider degenerate three-circle configurations and circle-packed triangles, where one, two, or three circles degenerate into geodesics. Note that the case where all $k_i = 0$ is distinct from the case where all $k_i = 1$. In the former, the dual triangle coincides with the generalized triangle itself.
    }
    \label{fig:dgn_3circle}
\end{figure}

\subsection{Pairing of half-edges}

To define a degenerate circle-packed surface, we introduce topological information about the triangulation $G$ to describe the gluing process along degenerate edges. 
These edges may be ideal points at the boundary of infinity.

\begin{definition}[Half-edge]\label{def:half_edge}
    Let $f = (u, v, w) \in F$ be a positively oriented triangle of $G$. 
    The oriented edge $e^f = (u, v)$ is called a \DEF{half-edge} of edge $e = (uv) \in E$ of $f$.
\end{definition}

If $g = (v, u, x) \in F$ is the other positively oriented triangle sharing the edge $e = (uv)$, then $e^g = (v, u)$ is another half-edge of $e$ of $g$. 
Each edge $e \in E$ has two half-edges in two adjacent triangles, with reversed orientations. 
Denote by $\widetilde{E}^+$ the set of all half-edges of $G$. 
There is a natural 2-to-1 map
\begin{equation}
    \label{map:edge.proj.}
    \pi_E : \widetilde{E}^+ \to E
\end{equation}
defined by forgetting its orientation.

\begin{definition}[Pairing of half-edges]
    Let $\sigma_G: \widetilde{E}^+ \to \widetilde{E}^+$ be the natural involution reversing orientations of half-edges. 
    That is, for any $(u, v) \in \tilde{E}^+$, 
    \[ \sigma_G (u,v) := (v,u). \]
\end{definition}
If $\sigma_G(e^f)$ is a half-edge inside $g\in F$, then $e$ is a common edge of $f,g$.

\subsection{Degenerated circle packed surface}
In this subsection, we generalize circle-packed surfaces (Definition \ref{def:circ.pack.surf.}) to degenerate circle-packed surfaces (Definition \ref{def:dgn_surf1}). 
Set $\vec{k} = (k_v)_{v \in V} \in \overline{\Omega}_k = \mathbb{R}_{\geq 0}^V$. 
For any $f = (u, v, w) \in F$, let $\Delta_f$ be a (degenerate) circle-packed triangle of curvatures $(k_u, k_v, k_w) \in \overline{\Omega}_k^{\Delta} := \mathbb{R}_{\geq 0}^3$.

\begin{definition}
    \label{def:spike}
    For $e = (uv) \in E$ with $k_u = k_v = 0$, let $e^f = (u, v)$ be a half-edge of edge $e$ of $f$. 
    A neighborhood around the ideal point from the degenerate edge $(uv)$ of $\Delta_f$ is called the \DEF{spike corresponding to the half-edge} $e^f$.
\end{definition}

For $e = (uv) \in E$ and $f, f' \in F(e)$, we glue $\Delta_f$ and $\Delta_{f'}$ as follows: 
\begin{itemize}
    \item If $k_u = k_v = 0$, the corresponding half-edges $\pi_E^{-1}(e) = \{e^f, e^g\}$ of $f, g$ degenerate to ideal points, and the pairing of half-edges $\sigma_G$ forms a pairing of these spikes.
    \item Otherwise, if at least one of $k_u$ or $k_v$ is not zero, we glue $\Delta_f$ and $\Delta_{f'}$ as in Definition \ref{def:circ.pack.surf.}.
\end{itemize}


Let
\[ E^0(\vec{k}) := \{\ e=(uv) \ \lvert\ k_u=k_v=0 \ \}. \  \]
The lift $\pi_E^{-1} (E^0 (\vec{k}))$ of $E^{0} (\vec{k})$ under projection $\pi_E^{-1}$ \eqref{map:edge.proj.} is a subset of $\tilde{E}^+$. 


\begin{definition}\label{def:dgn_surf1}
    For $\vec{k} \in \partial\Omega_k$, the \DEF{degenerated circle packed surface} corresponding to $\vec{k}$ is defined as a (possibly disconnected) hyperbolic surface obtained by gluing circle packed triangles as before, together with a pairing $\sigma_G$ of  $\pi_E^{-1}(E^0(\vec{k}))$. 
\end{definition}

\begin{remark}\label{rmk:disconnect}
  
  See Figure \ref{fig:dgn_packing1} for a typical example of a degenerated surface which is disconnected. 
  Since each spike is paired with another by $\sigma_G(\vec{k})$, this hyperbolic surface can be considered "virtually connected": 
  entering one spike leads to another paired spike. 
  The "tangent point at infinity" resembles a "wormhole" connecting the two triangles.

\end{remark}
A new type of boundary component arises if circle packed surface is degenerated. 
\begin{definition}
    \label{def:crown.bdy.}
    A \DEF{crowned boundary component} of a hyperbolic surface $X$ is an "unordered" sequence $(g_1, \dots, g_n)$, where $g_i$ is a complete geodesic boundary of $X$, satisfying:
    \begin{itemize}
        \item $g_n$ is tangent to $g_1$;
        \item $g_i$ is tangent to $g_{i+1}$ for $i = 1, \dots, n-1$.
    \end{itemize}
    The term "unordered" means that the indices can be cycled, i.e., any cyclic permutation of $\{1, \dots, n\}$ represents the same sequence.
\end{definition}

\begin{proposition}
    \label{prop:deg.circ.pack.surf.}
    For each $\vec{k} \in \partial \Omega_k$, it can be uniquely (up to isometry) realized as a degenerated circle packed surface as in Definition \ref{def:dgn_surf1}. 
    A boundary component of $S \setminus E^0(\vec{k})$ corresponds to a crowned boundary component of $\vec{k}$.
\end{proposition}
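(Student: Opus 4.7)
The plan is to prove the proposition in two stages: first establish the existence and uniqueness of the realization, and then identify each topological boundary component of $S \setminus E^0(\vec{k})$ with a crowned boundary component of the resulting hyperbolic surface.

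For existence and uniqueness, I would apply Lemma \ref{L123} face-by-face: for each $f = (uvw) \in F$, the degenerated circle-packed triangle $\Delta_f$ of curvatures $(k_u, k_v, k_w)$ is determined uniquely up to isometry. For each edge $e = (uv) \in E \setminus E^0(\vec{k})$, at least one of $k_u, k_v$ is strictly positive, so the two corresponding sides in the adjacent triangles are genuine finite geodesic segments, and they glue by the unique isometry matching the shared tangent point, exactly as in Definition \ref{def:circ.pack.surf.}. For each $e \in E^0(\vec{k})$, both sides have degenerated to ideal boundary points, and the involution $\sigma_G$ canonically pairs the two resulting spikes (Definition \ref{def:spike}), completing the construction. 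Uniqueness up to isometry is then immediate, since each ingredient (the shape of each triangle, each non-degenerate gluing, and the canonical pairing $\sigma_G$) is uniquely forced.

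For the crown identification, I would analyze the geometry around a vertex $v$ with $k_v = 0$. In each incident face $f \in F_{\{v\}}$, the arc $C_v^f$ is a geodesic segment on a single geodesic line $C_v \subset \mathbb{H}^2$, and the edges meeting $v$ are perpendicular to $C_v$ unless they lie in $E^0(\vec{k})$. Hence across a non-degenerate edge at $v$ the arcs $C_v^f$ from neighboring faces glue into a longer geodesic segment of $C_v$, while across an edge in $E^0(\vec{k})$ the geodesic $C_v$ terminates at the ideal endpoint of the corresponding spike. After topologically removing $E^0(\vec{k})$ from $S$, each maximal run of triangles around $v$ not separated by a degenerate edge thus contributes exactly one complete geodesic boundary arc lying on $C_v$. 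Any topological boundary component $B$ of $S \setminus E^0(\vec{k})$ is a cyclic sequence of half-edges in $\pi_E^{-1}(E^0(\vec{k}))$; the portion of $B$ between two consecutive half-edges sits around some vertex $v_i$ with $k_{v_i} = 0$ and corresponds to one such complete geodesic $g_i$. Since successive $g_i, g_{i+1}$ come from vertices joined by an edge in $E^0(\vec{k})$, they share the ideal endpoint assigned to that edge and are tangent at infinity, yielding exactly the data of Definition \ref{def:crown.bdy.}.

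The main obstacle will be this final cyclic matching. I must verify that the cyclic order of tangent geodesics $g_i$ in the hyperbolic crown coincides with the cyclic order of half-edges traversed along $B$ in $S \setminus E^0(\vec{k})$. Concretely, at each ideal endpoint one has to track how $\sigma_G$ transitions from a spike of one triangle into the spike of its paired neighbor, and then into the next maximal run of triangles around the next vertex, so that walking around the hyperbolic boundary produces exactly the same succession of vertices and edges as walking around $B$. Once this bookkeeping is settled, the closure of the crown and the tangency conditions follow automatically from the local geometry established in the previous step.
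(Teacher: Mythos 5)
Your proof is correct and takes essentially the same approach as the paper: existence and uniqueness follow from Lemmas \ref{L23} and \ref{L123} together with the forced gluing data, and the crown structure is obtained by writing each boundary component of $S \setminus E^0(\vec{k})$ as a cyclic sequence of degenerate half-edges and assigning a complete geodesic to each pair of consecutive ones. You are in fact slightly more explicit than the paper about why the arcs $C_v^f$ in a fan around a degenerate vertex $v$ glue into a single complete geodesic (perpendicularity to the non-degenerate edges at the shared tangent point), a step the paper compresses into the phrase ``the common complete geodesic component'' of adjacent spikes.
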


\begin{proof} 
    For any $f = (u, v, w) \in F$, by Lemmas \ref{L23} and \ref{L123}, there exists a unique (degenerate) circle-packed triangle (up to isometry) of curvatures $(k_u, k_v, k_w) \in \overline{\Omega}_k^{\Delta} := \mathbb{R}_{\geq 0}^3$. 
    Two spikes are paired if and only if they correspond to the same edge. 
    Combining the uniqueness of isometries for gluing (possibly degenerated) circle packed triangles (Definitions \ref{def:circ.pack.surf.} and \ref{def:dgn_surf1}), there is a unique degenerated circle packed surface realizing $\vec{k}$.

    Consider a boundary component $\beta$ of $S \setminus E^0(\vec{k})$. 
    Choose an orientation on $\beta$ such that $S \setminus E^0(\vec{k})$ is on the left of $\beta$. 
    Write $\beta$ as a sequence $(\tilde{e}_1, \cdots, \tilde{e}_n)$ of half-edges satisfying:
    \begin{itemize}
        \item The orientation of $\tilde{e}_i$ matches the chosen orientation of $\beta$ for $i = 1, \cdots, n$.
        \item The tail of $\tilde{e}_n$ is the head of $\tilde{e}_1$.
        \item The tail of $\tilde{e}_i$ is the head of $\tilde{e}_{i+1}$ for $i = 1, \cdots, n-1.$
    \end{itemize}

    By Definition \ref{def:spike}, a half-edge $e_i$ corresponds to a spike if and only if its projection $\pi_E(e_i)$ is contained in $E^0(\vec{k})$. 
    For $i = 1, \cdots, n-1$, let $g_i$ (resp. $g_n$) be the common complete geodesic component of the spikes corresponding to $e_i$ and $e_{i+1}$ (resp. $e_n$ and $e_1$). 
    By Definition \ref{def:crown.bdy.}, the sequence $(g_1, \cdots, g_n)$ represents a crowned boundary of the hyperbolic surface.
\end{proof}

\begin{figure}[!h]
    \centering
    \includegraphics[width=0.7\linewidth]{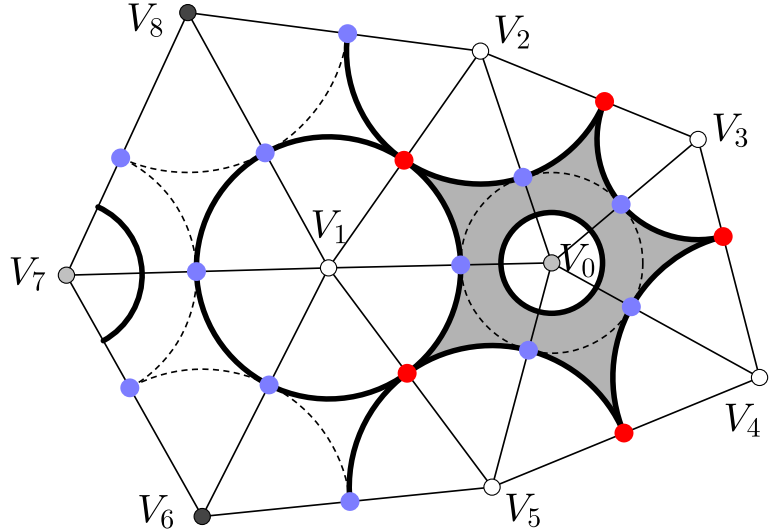}
    \caption{\small 
    A typical local picture of a degenerated circle-packed surface is shown. 
    Here, $k_i = 0$ for $i = 1, 2, 3, 4, 5$ and $k_i > 0$ for $i = 0, 6, 7, 8$. 
    All bold circles represent hyperbolic geodesic boundaries. 
    The shaded region, which acts as a connected component of the degenerated surface, is a complete hyperbolic surface with one crowned boundary and one closed geodesic boundary.
    }
    \label{fig:dgn_packing1}
\end{figure}

\begin{remark}
    A hyperbolic surface with a crowned boundary is called a crowned hyperbolic surface. 
    One motivation for studying crowned hyperbolic surfaces is that the metric completion of the complement of a geodesic lamination on a hyperbolic surface can result in a crowned hyperbolic surface. 
    Gupta applied crowned hyperbolic surfaces to construct limiting harmonic maps \cite{G19}. 
    Tamburelli provided a correspondence between wild globally hyperbolic maximal anti-de Sitter structures and pairs of crowned hyperbolic surfaces \cite{T21}.
\end{remark}

\section{Extended homeomorphism}\label{4}

In this section, we prove Theorem \ref{Main_result_1} which extends $\Phi : \Omega_k \to \Omega_T$ as a homeomorphism $\widehat{\Phi}: \overline{\Omega}_k \to \widehat{\Omega}_T$. We first obtain the homeomorphic extensions of the maps $\Phi^\Delta$ for generalized triangles (Proposition \ref{prop:local_homeo}). Next, we show that the extended map $\widehat{\Phi}$ is a homeomorphism on every stratum of the boundary (Proposition \ref{prop:strata}). Hence $\widehat{\Phi}$ is a continuous bijection. Finally we show that its inverse map is also continuous.

\subsection{Extended homeomorphisms for generalized triangles}\label{sec:4}

It was shown in \cite{BHS} that the map
\begin{equation}\label{map:Phi_T_Delta}
\begin{split}
    \Phi^\Delta \ : \ \Omega_k^\Delta \ &\to \  \Omega_T^\Delta \subset \mathbb{R}_{>0}^3 \\
    (k_1,k_2,k_3) & \mapsto (T_1,T_2,T_3)
\end{split}
\end{equation}
recording the total geodesic curvature of arcs in a triangle is a homeomorphism, where 
$$ \Omega_T^\Delta := \{\ (T_1,T_2,T_3)\in\mathbb{R}_{>0}^3 \ \lvert\  T_1+T_2+T_3<\pi\ \} \ .$$ 
In this subsection, we show that it extends continuously to a homeomorphism from $\overline{\Omega}_k^{\Delta} := \mathbb{R}_{\geq0}^3$ to $\widehat{\Omega}_T^{\Delta}$ (Proposition \ref{prop:local_homeo}), where
$ \widehat{\Omega}_T^{\Delta} := \{\ (T_1,T_2,T_3)\in\mathbb{R}_{\geq0}^3 \ \lvert\  T_1+T_2+T_3<\pi\ \} $ is a partial closure of $\Omega_T^\Delta$, similar to \eqref{def:widehat_T}.

The first ingredient is the following precise formula for the map $\Phi^\Delta$.
\begin{theorem}
    \label{thm:T123}
    For a circle packed triangle with curvature $(k_1,k_2,k_3)$, let $T_i$ be the total geodesic curvature of the circle arc centered at vertex $v_i,\ i=1,2,3$.
    Then for any cyclic re-ordering $(i,j,k)$ of $(1,2,3)$, when none of $k_1, k_2, k_3$ is $1$, we have 
        $$ T_i = T_i(k_1,k_2,k_3) = \frac{ \theta_i k_i}{\sqrt{\lvert 1-k_i^2 \rvert}}\ , $$
    where
    \begin{equation}\label{eq:k123}
        \frac{2+(k_i k_j + k_j k_k + k_k k_i) - k_i^2}{(k_i+k_j)(k_i+k_k)} = \left\{ 
        \begin{array}{ll}
             \cos \theta_i &  ,\ k_i>1  \\
             \cosh \theta_i & ,\ k_i<1 
        \end{array} \right. \ .
    \end{equation}
    
    Furthermore, the extension of the map given by 
    $$(k_1, k_2, k_3) \mapsto \big( T_1(k_1,k_2,k_3), T_2(k_1,k_2,k_3), T_3(k_1,k_2,k_3) \big)$$
    on $\Omega_k^\Delta=\mathbb{R}^3_{>0}$ is a $C^1$-map which is identical to the map $\Phi^{\Delta}$ of \eqref{map:Phi_T_Delta}. 
    
    In particular, for $k_i=1$, we have 
    $$ T_i = \frac2{\sqrt{(k_j+1)(k_k+1)}} \ . $$
    \end{theorem}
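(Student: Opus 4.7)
The plan is to obtain the explicit rational formula \eqref{eq:k123} by realizing the three-circle configuration in the hyperboloid model $\Lrz$, computing the side lengths of the associated geodesic triangle, and then applying an appropriate hyperbolic law of cosines. Multiplying the resulting inner angle by the arc-length conversion from Table \ref{table:relation} and by $k_i$ will give $T_i$. The $C^1$-extension to $\Omega_k^\Delta=\mathbb{R}^3_{>0}$ is then a consequence of the analyticity of the resulting rational expression.

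More concretely, I would first represent each generalized circle $C_i$ in $\Lrz$ by a pair (space-like direction, height parameter), so that the tangency condition between any two of $C_1,C_2,C_3$ becomes a single bilinear equation. This gives a uniform description of the three-circle configuration valid for circles, horocycles and hypercycles. From this I would read off the hyperbolic distance (or distance-to-geodesic, when a vertex is a segment) between the centers. In the pure-circle regime $k_i,k_j>1$ this distance equals $r_i+r_j$ with $k_\bullet=\coth r_\bullet$; the hyperbolic law of cosines
\[
\cos\theta_i=\frac{\cosh l_{ij}\cosh l_{ki}-\cosh l_{jk}}{\sinh l_{ij}\sinh l_{ki}}
\]
together with $\cosh(r_i+r_j)=\cosh r_i\cosh r_j+\sinh r_i\sinh r_j$ and $\cosh r_\bullet=k_\bullet/\sqrt{k_\bullet^2-1}$, $\sinh r_\bullet=1/\sqrt{k_\bullet^2-1}$, makes the square roots cancel and produces exactly the RHS of \eqref{eq:k123} after expansion.

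Next I would treat the hypercycle case $k_i<1$, which requires the analogue of the law of cosines for a hyperbolic hexagon (right-angled where the hypercycle axes meet the edges), because now $v_i$ is a geodesic segment rather than a point. Using $k_\bullet=\tanh r_\bullet$ and substituting $\cosh\theta_i$ for $\cos\theta_i$ in the above identity, the same algebraic simplification yields the bottom branch of \eqref{eq:k123}; this is the step I expect to be the main obstacle, as the "triangle" here is a degenerate hyper-ideal object and the corresponding cosine rule has to be applied carefully. The horocycle cases $k_i=1$ (or $k_j=k_k=1$ etc.) do not fit either branch directly, so I would verify them by a separate computation in the upper half-plane with a horocyclic arc (a horizontal segment), whose length between the tangent points with the two neighboring circles is elementary. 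Combining with Table \ref{table:relation} gives $T_i=l_ik_i=\theta_ik_i/\sqrt{|1-k_i^2|}$ in the non-horocyclic cases, and the special formula $T_i=2/\sqrt{(k_j+1)(k_k+1)}$ when $k_i=1$.

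Finally, to see that the extension is $C^1$ and identical to $\Phi^\Delta$ of \eqref{map:Phi_T_Delta}, I would note that $\theta_i$ defined via \eqref{eq:k123} is a real-analytic function of $(k_1,k_2,k_3)\in\mathbb{R}^3_{>0}$ except possibly on the walls $\{k_i=1\}$, where the RHS of \eqref{eq:k123} takes the value $1$. A short computation shows that $\theta_i k_i/\sqrt{|1-k_i^2|}$ has a removable singularity there with limit $2/\sqrt{(k_j+1)(k_k+1)}$, and the one-sided derivatives from the two regimes agree because both branches of \eqref{eq:k123} admit the same Taylor expansion to first order in $k_i-1$. This glues the three case formulas into a single $C^1$ map which must coincide with the original $\Phi^\Delta$ on $\Omega_k^\Delta$ by definition of $T_i$.
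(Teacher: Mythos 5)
Your plan is workable and genuinely different from the paper's route. The paper does not compute anything at the triangle vertices at all: instead it computes the hyperbolic distance between the two tangent points $\bvec{t}_{ki},\bvec{t}_{ij}$ (the vertices of the \emph{dual} triangle), which are always interior points of $\mathbb{H}^2$ regardless of whether $C_i$ is a circle, horocycle or hypercycle. Working in the Klein model with barycentric coordinates for the tangent points and a Ceva-type identity, the paper arrives at the single uniform formula
$\cosh d(\bvec{t}_{ki},\bvec{t}_{ij}) = 1 + 2/\bigl((k_i+k_j)(k_i+k_k)\bigr)$,
and then converts this distance to the inner angle and to $T_i$ via a single one-variable conversion (Proposition~\ref{prop:arc_by_kd}). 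This is what allows one proof to cover all cases at once. Your approach, by contrast, goes through the edge lengths $l_{ij}=r_i+r_j$ and a vertex-based cosine rule, which is correct in the all-circle regime (I checked: $\cosh(r_i+r_j)=(k_ik_j+1)/\sqrt{(k_i^2-1)(k_j^2-1)}$, $\sinh(r_i+r_j)=(k_i+k_j)/\sqrt{(k_i^2-1)(k_j^2-1)}$, and the cosine rule collapses to exactly \eqref{eq:k123}), but it forces a separate trigonometric identity for each combination of vertex types: the ordinary cosine rule when all three are circles, a Lambert-type quadrilateral identity when one $C_i$ is a hypercycle, a right-angled pentagon/hexagon identity when two or three are hypercycles, plus the upper half-plane computation for each horocyclic case and each mixed case where a horocycle is adjacent to a circle or hypercycle. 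You correctly identify the hyper-ideal case as the main obstacle; the paper's dual-triangle trick is precisely what avoids it. What your route buys is elementarity (no Lorentzian projective setup is needed in the all-circle case), at the cost of a combinatorial explosion of cases.

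One point you should tighten: the $C^1$ claim across $\{k_i=1\}$. You assert that ``both branches of \eqref{eq:k123} admit the same Taylor expansion to first order in $k_i-1$,'' but the regularity of $T_i=k_i\theta_i/\sqrt{\lvert 1-k_i^2\rvert}$ does not follow from a first-order match of the RHS of \eqref{eq:k123}; it depends on the $x^{3/2}$-order behaviour of $\arccos(1-x)$ and $\mathrm{arccosh}(1+x)$, and those two expansions actually differ in sign at that order (cf.\ \eqref{Appeq:expansion}). The cancellation that makes $\partial l/\partial k$ continuous across $k_i=1$ is therefore not automatic and has to be checked by an explicit limit, as the paper does. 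You should also say a word about the strata where two or three $k_i$ equal $1$ simultaneously, which your case decomposition does not obviously cover.
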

    
    \begin{corollary}\label{coro:k_P}
    Let $C_P$ be the circle passing through the points of tangency a circle packed triangle with curvature $(k_1,k_2,k_3)$. Then the geodesic curvature $k_P$ of $C_P$ is given by 
    \begin{equation}\label{eq:k_P}
    k_P^2 = k_1 k_2 + k_2 k_3 + k_3 k_1 + 1 .
    \end{equation}
    \end{corollary}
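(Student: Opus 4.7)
The plan is to work in the hyperboloid model $\mathbb{H}^2 \subset \mathbb{R}^{2,1}$ via the unified dual-vector description of generalized circles (as in Lemma \ref{lem:gnr_circle}): each generalized circle $C$ of geodesic curvature $k>0$ corresponds to a vector $w \in \mathbb{R}^{2,1}$ with $\langle w, w \rangle = 1 - k^2$ and $C = \{x \in \mathbb{H}^2 : \langle x, w \rangle = -k\}$. The key geometric reinterpretation of $C_P$ is that it meets each $C_i$ orthogonally: at the tangent point $t_{ij}$, the circles $C_i$ and $C_j$ share a common tangent line, so any circle through $t_{ij}$ orthogonal to $C_i$ is automatically orthogonal to $C_j$; conversely, an elementary argument already visible in the Euclidean plane shows that a circle orthogonal to both $C_i$ and $C_j$ must pass through $t_{ij}$. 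Hence $C_P$ is the unique circle orthogonal to $C_1, C_2, C_3$, a characterization that produces clean algebraic conditions on $w_P$.

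I would then translate the configuration into inner-product relations. A direct computation using $\cosh(r_i+r_j) = \cosh r_i \cosh r_j + \sinh r_i \sinh r_j$ shows that external tangency of $C_i, C_j$ corresponds to $\langle w_i, w_j \rangle = -(1+k_i k_j)$, while projecting $w_a, w_b$ onto the tangent plane at a common point shows that orthogonality of $C_a, C_b$ corresponds to $\langle w_a, w_b \rangle = -k_a k_b$. Assuming $w_1, w_2, w_3$ are linearly independent in $\mathbb{R}^{2,1}$ (the generic case; degenerate configurations follow by continuity), write $w_P = a_1 w_1 + a_2 w_2 + a_3 w_3$. The three orthogonality conditions $\langle w_P, w_i \rangle = -k_P k_i$ become the linear system $G \vec{a} = -k_P \vec{k}$, where $G = (\langle w_i, w_j \rangle)_{i,j}$ is the Gram matrix and $\vec{k} = (k_1, k_2, k_3)^T$. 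Substituting $\vec{a} = -k_P \, G^{-1}\vec{k}$ into the normalization $\vec{a}^T G \vec{a} = \langle w_P, w_P \rangle = 1 - k_P^2$ reduces \eqref{eq:k_P} to the single quadratic-form identity
$$ \vec{k}^T G^{-1} \vec{k} \;=\; -\frac{S_2}{1 + S_2}, \qquad S_2 \;:=\; k_1 k_2 + k_2 k_3 + k_3 k_1 . $$

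The main obstacle is evaluating this quadratic form, which I would tackle by exploiting the clean structure $G = 2I - e e^T - u u^T$ with $e = (1,1,1)^T$ and $u = \vec{k}$, immediate by comparing diagonal entries ($G_{ii} = 1-k_i^2$) with off-diagonal entries ($G_{ij} = -(1+k_i k_j)$). A Sherman–Morrison–Woodbury inversion for this rank-$2$ perturbation of $2I$ reduces the problem to inverting a symmetric $2\times 2$ matrix whose entries involve only the elementary sums $e^T e = 3$, $e^T u = \sum k_i$, and $u^T u = \sum k_i^2$; the identity $(\sum k_i)^2 - \sum k_i^2 = 2 S_2$ then collapses the algebra to the required value $-S_2/(1+S_2)$, which is equivalent to $k_P^2 = 1 + S_2$. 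If one prefers to bypass the uniform treatment of circles, horocycles, and hypercycles, both sides of \eqref{eq:k_P} are polynomial in $(k_1, k_2, k_3)$, so it suffices to verify the identity on the open subset $\{k_i > 1\}$ of honest circles and extend by continuity to all of $\mathbb{R}_{>0}^3$.
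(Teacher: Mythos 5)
Your proposal is correct but takes a genuinely different route from the paper. The paper proves the corollary as a true corollary of Theorem~\ref{thm:T123}: it invokes the bigon relation from \cite[Lemma 2.7]{BHS} (relating the inner angle $\theta_1$ of the arc of $C_1$ cut off by the orthogonal circle $C_P$ to $k_1$ and $k_P$, via $\cot(\theta_1/2) = k_P/\sqrt{k_1^2-1}$ when $k_1>1$, $\coth(\theta_1/2) = k_P/\sqrt{1-k_1^2}$ when $k_1<1$, and $l_1=2/k_P$ when $k_1=1$), then substitutes the already-computed formula \eqref{eq:k123} for $\cos\theta_1$ or $\cosh\theta_1$ and simplifies via double-angle identities, treating the three cases separately. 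Your proof instead bypasses Theorem~\ref{thm:T123} and the bigon lemma entirely: it works in the Lorentzian model, encodes external tangency as $\langle w_i,w_j\rangle = -(1+k_ik_j)$ and orthogonality as $\langle w_P,w_i\rangle = -k_Pk_i$, recognizes the Gram matrix as the rank-two update $G=2I-ee^T-uu^T$, and evaluates $\vec{k}^TG^{-1}\vec{k}$ via Woodbury. I verified that inversion: one gets $\vec{k}^TG^{-1}\vec{k} = \sigma_2'/2 - (\sigma_1^2+\sigma_2'S_2)/\bigl(2(1+S_2)\bigr) = -S_2/(1+S_2)$ using $\sigma_1^2-\sigma_2'=2S_2$, which together with $\vec{a}^TG\vec{a} = k_P^2\,\vec{k}^TG^{-1}\vec{k} = 1-k_P^2$ gives $k_P^2 = 1+S_2$ as claimed. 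What each approach buys: the paper's argument is shorter given the machinery already in place, but splits into three cases and leans on an external reference; yours is self-contained, uniform across circles, horocycles, and hypercycles, and reveals the cleaner underlying linear-algebraic structure (with the generic/continuity caveat for linear dependence of $w_1,w_2,w_3$, which you handle appropriately). Both are valid.
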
  
    
    Note that the $C_P$ in Corollary \ref{coro:k_P} is orthogonal to  three circles. In \cite{BHS}, $C_P$ plays an important role in the relations for geometric quantities of a triangle.

    \begin{remark}
    We will proved Theorem \ref{thm:T123} and Corollary \ref{coro:k_P} in Appendix \ref{sec:app}, by applying the Klein model of hyperbolic plane.
    \end{remark}

\bigskip
With the help of the explicit formula in Theorem \ref{thm:T123}, we obtain the asymptotic behavior of total geodesic curvatures for triangles as follows. 
For any subset $I\subset\{1,2,3\}$, the strata $\partial^I\Omega_k^\Delta$ and $\partial_0^I\Omega_T^\Delta$ are defined similarly as (\ref{def:stratum}).   
    \begin{proposition}\label{prop:cntnu_extd}
        The homeomorphism $\Phi^\Delta: \Omega_k^\Delta \to \Omega_T^\Delta$ extends to a continuous map $\widehat\Phi^\Delta: \overline{\Omega}_k^{\Delta} \rightarrow \widehat{\Omega}_T^{\Delta}$,  
        with the image of $\partial^{I} \Omega_k^\Delta$ lying in  $\partial_0^I \Omega_T^\Delta$ for any $I\subset\{1,2,3\}$.        
     \end{proposition}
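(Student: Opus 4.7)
The plan is to define $\widehat{\Phi}^\Delta$ coordinate-wise via the explicit formulas in Theorem \ref{thm:T123} and verify continuity, target containment, and the stratum-preserving property by an asymptotic analysis at each boundary point. I would fix $\vec{k}^* \in \partial^I \Omega_k^\Delta$ (so $k_i^* = 0$ iff $i \in I$) and examine the behavior of $T_i(\vec{k})$ as $\vec{k} \to \vec{k}^*$ from within $\Omega_k^\Delta$. Using \eqref{eq:k123}, the right-hand side defining $\cos\theta_i$ or $\cosh\theta_i$ has a numerator bounded uniformly on a neighborhood of $\vec{k}^*$ and denominator $(k_i+k_j)(k_i+k_k)$. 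For $i \notin I$ this denominator stays bounded away from $0$, so $\theta_i$ converges to a finite limit and the formula $T_i = \theta_i k_i/\sqrt{|1-k_i^2|}$ (or the $k_i=1$ variant) yields a positive finite $T_i^*$. For $i \in I$, the elementary bound $(k_i+k_j)(k_i+k_k) \geq k_i^2$ gives $\cosh\theta_i \leq C/k_i^2$, so $\theta_i \leq 2\log(1/k_i) + O(1)$; the product $k_i\theta_i$ then vanishes in the limit, yielding $T_i^* = 0$. This step simultaneously establishes continuity of the extension at $\vec{k}^*$ and the inclusion $\widehat{\Phi}^\Delta(\partial^I \Omega_k^\Delta) \subset \partial_0^I \Omega_T^\Delta$.

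Next I would verify the strict inequality $T_1^* + T_2^* + T_3^* < \pi$, so that the image lies in $\widehat{\Omega}_T^\Delta$. My preferred route is Gauss--Bonnet applied to the dual triangle $\Delta^*$: its boundary consists of three arcs of curvatures $k_1,k_2,k_3$ meeting tangentially at the tangent points $t_{ij}$ with interior angle $0$ at each corner, so the standard sign analysis of geodesic curvature gives
\[
    -\area(\Delta^*) - (T_1+T_2+T_3) + 3\pi = 2\pi ,
\]
i.e.\ $T_1+T_2+T_3 = \pi - \area(\Delta^*)$. In the degenerated setting, $\Delta^*$ remains a well-defined hyperbolic polygon with possibly ideal vertices at those $t_{ij}$ with $k_i^* = k_j^* = 0$ (see Lemma \ref{L123} and Figure \ref{fig:dgn_3circle}), and its area lies in $(0,\pi]$, attaining $\pi$ precisely when $I = \{1,2,3\}$ and $\Delta^*$ is the ideal triangle. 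Passing to the limit yields $T_1^* + T_2^* + T_3^* = \pi - \area(\Delta^*) < \pi$ in every case.

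The main obstacle is the uniform asymptotic estimate underlying the first paragraph: when two or three coordinates vanish simultaneously, the right-hand side of \eqref{eq:k123} takes a $0/0$ form and the direction of approach a priori matters. The crucial bound is $k_i\theta_i \leq 2k_i\log(1/k_i) + O(k_i)$, valid uniformly on a neighborhood of $\vec{k}^*$; establishing it rigorously requires controlling both numerator and denominator of \eqref{eq:k123} along arbitrary sequences and is the real technical content. Once this bound and the Gauss--Bonnet identity are in place, combining them with the $C^1$-smoothness of $\Phi^\Delta$ on $\Omega_k^\Delta$ (Theorem \ref{thm:T123}) yields both the continuous extension and the claimed stratum correspondence.
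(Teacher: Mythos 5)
Your proof is correct and follows essentially the same route as the paper's: the bound $\cosh\theta_i \leq 2/k_i^2$ from $(k_i+k_j)(k_i+k_k)\geq k_i^2$ gives $k_i\theta_i\to 0$ for $i\in I$, and the Gauss--Bonnet identity $T_1+T_2+T_3 = \pi - \area(\Delta^*)$ on the dual triangle (which is non-degenerate in the limit) yields the strict inequality. Your final-paragraph worry is unnecessary: even when two or three coordinates vanish, the right-hand side of \eqref{eq:k123} is not a $0/0$ form (the numerator tends to $2$, not $0$), so the estimate $\cosh\theta_i \leq 2/k_i^2$ holds uniformly on a neighborhood of $\vec{k}^*$ and is independent of the direction of approach, exactly as your own elementary inequality already shows.
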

    \begin{proof}
        
        For any $(k_1^*,k_2^*,k_3^*)\in\partial^I\Omega_k^\Delta$ and any sequence $\{(k^n_1,k^n_2,k^n_3)\}_{n=1}^{\infty}\subset\Omega_k^\Delta$ converging to $(k_1^*,k_2^*,k_3^*)$, we claim that there exists a unique $T_i^* \in [0, \pi)$ with $T_i(k_1^n,k_2^n,k_3^n)\rightarrow T_i^*$ for $i = 1, 2, 3$. 
        The equation \eqref{eq:k123} implies that there exists $T_i^*\in(0,\pi)$ such that $T_i(k_1^n,k_2^n,k_3^n)\rightarrow T_i^*$ for any $i\notin I$. 
    With loss of generality, we assume that $0<k_i^n<1$ since $k_i^n\to 0^+$ for $i \in I$. 
        
        Then 
        \[ \cosh\theta_i^n = 1 + \frac{2(1-(k_i^n)^2)} {(k_i^n+k_j^n)(k_i^n+k_k^n)} \leq 1 + \frac{2(1-(k_i^n)^2)}{(k_i^n)^2} \leq \frac{2}{(k_i^n)^2}\ .  \]
        Combining the fact that   
        $\theta_i^n \leq 2 \ln \frac{2}{k_i^n}$, we have
        \[ \theta_i^n k_i^n \leq 2 k_i^n \ln \frac{2}{k_i^n} \to 0 \quad \textrm{as\ } n\to \infty , \]
        and 
        $$\lim_{n\rightarrow \infty} T_i(k_1^n,k_2^n,k_3^n) = 0 .$$
        We obtain the claim by taking $T_i^*$ as $0$ for $i \in I$. 
     
     By applying the Gauss-Bonnet formula on the dual triangle $\Delta^*$ of the limiting degenerated circle packed triangle, we have 
     \begin{equation}
     T_1(k_1^*,k_2^*,k_3^*)+T_2(k_1^*,k_2^*,k_3^*)+T_3(k_1^*,k_2^*,k_3^*)=\pi-\mathrm{Area}(\Delta^*)
     .
     \end{equation}     
    Since the dual triangle $\Delta^*$ never vanishes, $\sum_{ i\notin I} T_{i}(k_1^*,k_2^*,k_3^*)<\pi$. 
    Therefore, the image of $ \partial^{I} \Omega_k^\Delta$ is contained in $\partial_0^I \Omega_T^\Delta$.
\end{proof}

\bigskip
Next, we show that the map $\widehat\Phi^\Delta:\overline{\Omega}_k^{\Delta} \rightarrow \widehat{\Omega}_T^{\Delta}$ is a homeomorphism by variational principle. 
Similar to the method used in \cite{BHS}, we need the following result.

\begin{lemma}[\cite{DGL}, Lemma 5.4]\label{lem:embedding}
    Let $W:\mathbb{R}^n\to\mathbb{R}$ be a smooth strictly convex function with positively definite Hessian matrix, then $\triangledown W:\Omega \to \mathbb{R}^n$ is a smooth embedding.  
\end{lemma}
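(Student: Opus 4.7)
The plan is to verify the three defining properties of a smooth embedding in turn: that $\nabla W$ is a smooth immersion, that it is injective, and that it is a homeomorphism onto its image. Each step uses a different feature of the hypothesis (smoothness, positive definiteness of the Hessian, strict convexity).

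First, since $W$ is smooth the map $\nabla W$ is smooth. Its differential at any $x \in \Omega$ is precisely $\Hessian W(x)$, which by hypothesis is positive definite, hence invertible. Thus $\nabla W$ is a smooth immersion, and by the inverse function theorem it is a local diffeomorphism at every point of $\Omega$.

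Next I would establish injectivity from strict convexity. Suppose for contradiction $\nabla W(x) = \nabla W(y)$ for two distinct points $x, y \in \Omega$. Restrict $W$ to the segment between them by setting $\varphi(t) := W\bigl((1-t)y + t x\bigr)$ for $t \in [0,1]$. The chain rule gives
\[ \varphi'(t) = \langle x-y,\ \nabla W((1-t)y + tx)\rangle, \qquad \varphi''(t) = \langle x-y,\ \Hessian W((1-t)y+tx)(x-y)\rangle . \]
Since $\Hessian W$ is positive definite and $x \neq y$, $\varphi''(t) > 0$ everywhere, so $\varphi'$ is strictly increasing on $[0,1]$. On the other hand,
\[ \varphi'(0) = \langle x-y,\ \nabla W(y)\rangle = \langle x-y,\ \nabla W(x)\rangle = \varphi'(1), \]
a contradiction. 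Hence $\nabla W$ is injective on $\Omega$.

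Finally, combining injectivity with the local diffeomorphism property, $\nabla W$ is a smooth bijection from $\Omega$ onto its image; the local smooth inverses provided by the inverse function theorem patch together into a global smooth inverse on $\nabla W(\Omega)$. In particular the inverse is continuous, so $\nabla W$ is a homeomorphism onto its image, which is exactly what a smooth embedding requires. The only genuinely nontrivial step is injectivity: the local information from the positive definite Hessian only yields a local diffeomorphism, and one must invoke the global strict convexity of $W$ (via the one-variable restriction $\varphi$) to rule out distinct preimages. Once injectivity is in hand, the remaining assembly is a direct application of the inverse function theorem.
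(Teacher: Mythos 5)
The paper does not reproduce a proof of this lemma; it is simply cited from \cite{DGL}, Lemma~5.4, so there is no in-paper argument to compare against. Your proof is correct and is the standard one: the positive-definite Hessian gives a local diffeomorphism via the inverse function theorem; restricting to a segment and using $\varphi''>0$ to contradict $\varphi'(0)=\varphi'(1)$ establishes global injectivity; and an injective local diffeomorphism is automatically an open map onto its image, hence an embedding. One small stylistic remark: you could obtain injectivity slightly more directly from strict monotonicity of the gradient, i.e.\ $\langle \nabla W(x)-\nabla W(y),\,x-y\rangle>0$ for $x\neq y$, which follows from strict convexity alone without invoking the Hessian a second time; but since the hypothesis supplies a positive-definite Hessian anyway, your route through $\varphi''>0$ is equally valid and arguably cleaner to state.
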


The desired convex function is obtained by the integral of a closed 1-form. 
For a proper (possibly empty) subset $I$ of $\{1,2,3\}$, let 
\begin{equation}\label{map:ln_loc}
\begin{split}
    \ln :\quad \partial^I \Omega_k^\Delta \  & \longrightarrow \quad \mathbb{R}^{3-\lvert I \rvert} \\
    (k_1,k_2,k_3) &\longmapsto \vec{S}=(S_j)_{j \notin I}
\end{split}
\end{equation}
be the diffeomorphism defined on the boundary stratum, 
where $S_j:=\ln k_j$ for $j \notin I$.
Define a 1-form on $\mathbb{R}^{3-\lvert I \rvert}$ as 
\begin{equation}\label{def:1form_loc}
    \omega_\Delta^I := \sum_{j\notin I} T_j(e^{S_1},e^{S_2},e^{S_3}) \mathrm{d}S_j,
\end{equation} 
where $e^{S_i}$ is appointed to be zero if $i\in I$.

\begin{lemma}\label{lem:closed_form}
    For any proper $I\subset \{1,2,3\}$, the 1-form $\omega_\Delta^I$ is closed on $\mathbb{R}^{3-\lvert I \rvert}$. 
\end{lemma}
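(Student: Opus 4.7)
The closedness of $\omega_\Delta^I$ is equivalent to the symmetry
\[ \frac{\partial T_j}{\partial S_i}(\vec{S}) = \frac{\partial T_i}{\partial S_j}(\vec{S}) \]
for all $i, j \notin I$, where $T_l$ is understood as a function of $\vec{S}=(S_m)_{m\notin I}$ via $k_m = e^{S_m}$ if $m \notin I$ and $k_m = 0$ if $m \in I$. My plan is to bootstrap from the case $I = \varnothing$ (the standard closedness of the circle packed triangle 1-form, already recorded in \cite{BHS}) to the boundary strata by a continuity/approximation argument, using the explicit formulas from Theorem \ref{thm:T123}.

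Concretely, I would fix an arbitrary $\vec{S}^* \in \mathbb{R}^{3-|I|}$ and approximate it by interior configurations $\vec{S}^n \in \mathbb{R}^3$ defined by $S_l^n = S_l^*$ for $l \notin I$ and $S_i^n = \ln \epsilon_n$ for $i \in I$, with $\epsilon_n \to 0^+$. At each $\vec{S}^n$, the $I = \varnothing$ case provides the interior symmetry $\partial T_j/\partial S_i(\vec{S}^n) = \partial T_i/\partial S_j(\vec{S}^n)$. Passing to the limit $n \to \infty$ then yields the desired identity at $\vec{S}^*$, provided $T_j$ and its partial derivatives in $S_l$, $l \notin I$, extend continuously to the stratum $\{k_m = 0 : m \in I\}$.

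I would establish this $C^1$-extension from the explicit formula in Theorem \ref{thm:T123}. As $k_m \to 0^+$ for $m \in I$ we are automatically in the regime $k_m < 1$, so only the $\cosh$-branch of \eqref{eq:k123} intervenes; that branch expresses $\theta_j$ as a smooth function of $(k_1,k_2,k_3)$ which remains smooth (with bounded partials in $k_l$, $l \notin I$) down to $k_m = 0$. Hence $T_j = \theta_j k_j/\sqrt{|1-k_j^2|}$, together with its $k_j = 1$ variant, extends smoothly in $(k_l)_{l\notin I}$ up to the stratum, and multiplying by $k_l$ converts $\partial/\partial k_l$ into $\partial/\partial S_l$ without harming continuity. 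Applying this to both sides of the interior symmetry and letting $n \to \infty$ closes the argument.

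The anticipated main obstacle is exactly this regularity check: one must confirm that the implicit dependence of $\theta_j$ on $(k_1,k_2,k_3)$ via \eqref{eq:k123} behaves well as some $k_m \to 0$, that the partial derivatives of $T_j$ in the remaining variables are continuous up to the boundary, and that the two branches ($k_l < 1$ vs $k_l > 1$) match $C^1$-smoothly across $k_l = 1$ for $l \notin I$. The formulas support this directly, but because the appendix is the natural place to record the underlying hyperbolic trigonometry, I would consolidate the bookkeeping there and cite it here.
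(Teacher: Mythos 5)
Your plan is correct in spirit, but it takes a genuinely different route from the paper. The paper does not invoke a continuity/approximation argument from the interior: for $|I|=2$ it observes that a $1$-form on $\mathbb{R}$ is trivially closed, and for $|I|=1$ (say $I=\{3\}$) it works \emph{directly on the stratum} $k_3=0$. Writing $l_i = T_i/k_i$, it reduces the symmetry $\partial T_1/\partial S_2 = \partial T_2/\partial S_1$ to $\partial l_1/\partial k_2 = \partial l_2/\partial k_1$, and then verifies this in the appendix by expressing $l$ as a function of $(k,D)$ with $D=\cosh d -1$, plugging in $D_1 = 2/(k_1(k_1+k_2))$, $D_2 = 2/(k_2(k_1+k_2))$ (which remain finite and positive at $k_3=0$), and computing both sides explicitly to be $-1/\bigl((k_1+k_2)\sqrt{1+k_1k_2}\bigr)$. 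No limit is taken.

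Your approach — approximating $\vec{S}^*$ by interior points $\vec{S}^n$ and passing the known interior symmetry from \cite{BHS} to the limit — is a valid alternative, but it displaces rather than removes the computational burden. The crux is your penultimate paragraph: you must show that, for $j\notin I$, the partials $\partial T_j/\partial S_l$ ($l\notin I$) extend continuously as $k_m\to 0^+$ for $m\in I$, and that the resulting limits agree with the partials of the restricted functions on the stratum (i.e.\ the map is $C^1$ \emph{up to} the stratum, not merely $C^1$ on $\mathbb{R}^3_{>0}$, which is all Theorem \ref{thm:T123} states). This follows from the same explicit formulas the paper uses, because the right-hand side of \eqref{eq:k123} and its $k_l$-derivatives are rational with denominators bounded away from zero on $\{k_1,k_2>0,\ k_3\geq 0\}$, and $\cosh\theta_j$ stays uniformly $>1$ (resp.\ $\cos\theta_j$ stays in $(-1,1)$), so the implicit $\theta_j$ and its partials are continuous up to $k_3=0$, across $k_j=1$. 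You would also still need the small observation that the $|I|=2$ case is vacuous. So: nothing is wrong, but the limiting argument buys you a cleaner conceptual link to the interior case at the cost of a slightly more delicate boundary-regularity check, whereas the paper's direct substitution of $k_3=0$ into the already-derived formulas for $\partial l/\partial k$ and $\partial l/\partial D$ gets there in two lines.
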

\begin{proof}
    For $I=\varnothing$, it has been shown in \cite[Lemma 2.8]{BHS}. 
    
    If $I$ is consisted of two elements, then $\mathbb{R}^{3-|I|}$ is $\mathbb{R}$ and $\omega_\Delta^I$ is naturally closed on $\mathbb{R}$. 
    
    It remains to consider the case that $I$ has only one element.
    Without loss of generality, set $I=\{3\}$. 
    Let $l_i := T_i / k_i$ be the length of the arcs for $i = 1, 2, 3$. 
    We have 
    \[ \pp{T_1}{S_2} = \pp{T_1}{k_2} \frac{\mathrm{d}k_2}{\mathrm{d}S_2} = k_2 \pp{T_1}{k_2} = k_1k_2 \pp{l_1}{k_2} \quad 
    \textrm{and } \quad 
    \pp{T_2}{S_1} = k_1k_2 \pp{l_2}{k_1} \ .
    \]
    It sufficies to prove that $\pp{l_1}{k_2} = \pp{l_2}{k_1}$. 
    This can be done by a direct computation and we left it at the last of Appendix \ref{AppSec:2}. 
\end{proof}

Now for any proper $I\subset \{1,2,3\}$, pick a base point $\vec{S}_I \in \mathbb{R}^{3-|I|}.$
By Lemma \ref{lem:closed_form} and Theorem \ref{thm:T123}, the integral of the 1-form  
\begin{equation}\label{def:energt_loc}
    \mathcal{E}^I(\vec{S}) := \int_{\vec{S}_I} ^ {\vec{S}}  \omega_\Delta^I
\end{equation}
is a well-defined $C^2$-function on $\mathbb{R}^{3-\lvert I \rvert}$. We call it the \DEF{energy function} of index $I$.

By generalizing \cite[Lemma 2.11]{BHS} from empty set $I$ to proper set $I$, we state the following lemma. We omit the proof, since it is similar to  \cite[Lemma 2.11]{BHS} . 
\begin{lemma}\label{lem:convex} 
For any proper $I\subset \{1,2,3\}$,  $\mathcal{E}^I$ is strictly convex on $\mathbb{R}^{3-\lvert I \rvert}$.
\end{lemma}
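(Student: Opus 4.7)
The plan is to adapt the proof of \cite[Lemma 2.11]{BHS}, which establishes convexity in the interior case $I=\varnothing$, to the boundary strata. Since $\partial\mathcal{E}^I/\partial S_j = T_j$ for $j\notin I$, strict convexity of $\mathcal{E}^I$ is equivalent to the Hessian
$$ H^I \ = \ \left( k_l\frac{\partial T_j}{\partial k_l} \right)_{j,l\notin I} $$
being symmetric positive definite on $\mathbb{R}^{3-|I|}$. Symmetry already follows from Lemma \ref{lem:closed_form}, so the only issue is strict positive definiteness.

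The natural strategy is to view $H^I$ as the limit of the corresponding principal submatrix of the full $3\times 3$ Hessian $H^{\varnothing}$ as $k_i\to 0^+$ for each $i\in I$. Since $H^\varnothing$ is positive definite on the interior by \cite[Lemma 2.11]{BHS}, any of its principal submatrices is positive definite on the interior; so the limiting matrix $H^I$ is at least positive semi-definite on $\partial^I\Omega_k^\Delta$. To upgrade this to strict positive definiteness, I would first verify (using Theorem \ref{thm:T123} and the computations of Appendix \ref{AppSec:2}) that every entry of the interior Hessian indexed by coordinates outside $I$ extends continuously to the boundary stratum, and then argue that no nontrivial vector supported on the $I^c$-coordinates can lie in the kernel of the limiting Hessian.

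The two subcases split naturally. When $|I|=2$, the Hessian reduces to a single positive scalar $k_j\,\partial T_j/\partial k_j$, which can be checked directly from the formula in Theorem \ref{thm:T123} by observing that $T_j$ is a strictly monotone increasing function of $k_j$ once the other two curvatures are frozen at $0$. When $|I|=1$, say $I=\{3\}$, we need the $2\times 2$ matrix to have positive diagonal entries and positive determinant; this is again a computation based on Theorem \ref{thm:T123} together with the explicit derivative identities worked out at the end of Appendix \ref{AppSec:2}. In both cases, the geometric picture (the dual triangle acquires an ideal vertex in the degenerate direction, but retains nonzero area, cf.\ Proposition \ref{prop:cntnu_extd}) provides the intuition for why the Hessian stays non-degenerate.

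The main obstacle is the $|I|=1$ case: to ensure that the determinant of the restricted $2\times 2$ Hessian stays strictly positive in the degenerate limit, one must control the asymptotic behavior of the off-diagonal entries $k_1\,\partial T_2/\partial k_1$ and $k_2\,\partial T_1/\partial k_2$ carefully as $k_3\to 0^+$, avoiding the apparent singularities in the formulas of Theorem \ref{thm:T123}. Here the closed $1$-form identity $\partial l_1/\partial k_2 = \partial l_2/\partial k_1$ from Lemma \ref{lem:closed_form} together with the explicit calculations of Appendix \ref{AppSec:2} do the heavy lifting; once these are in place, the remainder of the argument mirrors \cite[Lemma 2.11]{BHS} step by step, which is why the proof is omitted in the paper.
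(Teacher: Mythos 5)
Your plan matches the paper's: adapt [BHS, Lemma~2.11] by showing the Hessian $(k_l\,\partial T_j/\partial k_l)_{j,l\notin I}$ is positive definite, with symmetry supplied by Lemma~\ref{lem:closed_form}; the paper omits the details with the same pointer, so structurally the two agree.

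The one place your argument would not close as written is the limit/principal-submatrix step. Principal submatrices of a positive-definite matrix are positive definite, but a limit of positive-definite matrices is only positive \emph{semi}-definite, and the relevant principal minor of the interior Hessian could in principle tend to $0$ as $k_i\to 0^+$ for $i\in I$. You flag this and say you would then argue that no nonzero vector lies in the kernel, but that kernel statement is precisely the assertion of the lemma, so the limiting argument adds nothing beyond what remains to be shown. What would actually carry the proof is the direct case analysis you sketch next ($|I|=2$: verify $k_j\,\partial T_j/\partial k_j>0$ with the other two curvatures frozen at $0$; $|I|=1$: verify positive diagonal entries and positive determinant of the restricted $2\times2$ Hessian), using the explicit formula of Theorem~\ref{thm:T123} and the derivative identities of Appendix~\ref{AppSec:2}. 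Those computations are described but not carried out; since the paper also omits them, your proposal correctly identifies where the work lies and mirrors the intended argument, but does not itself close the positivity check.
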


Now all the conditions of Lemma \ref{lem:embedding} are satisfied. To give the exact image of the extended map $\widehat{\Phi}^{\Delta}$, we need the following asymptotic behaviour.

\begin{lemma}\label{lem:limit}
    Let $I\subset\{1,2,3\}$ be non-empty. 
    If $k_i \to +\infty$ for any $i\in I$ and $k_i$ is bounded from above for any $i \notin I$, then 
    $ \sum_{i \in I} T_i(k_1,k_2,k_3) \to \pi $.
    
\end{lemma}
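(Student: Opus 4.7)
The plan is to reduce the statement via the Gauss--Bonnet identity
\[
T_1+T_2+T_3 \;=\; \pi - \area(\Delta^*),
\]
already established in the proof of Proposition~\ref{prop:cntnu_extd}, to the two asymptotic estimates (a) $T_i \to 0$ for every $i \notin I$ and (b) $\area(\Delta^*) \to 0$; adding them immediately yields $\sum_{i \in I} T_i \to \pi$.

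For step (a), I would exploit the algebraic identity
\begin{equation*}
\frac{2+(k_ik_j+k_jk_k+k_kk_i)-k_i^2}{(k_i+k_j)(k_i+k_k)} \;=\; 1 \;-\; \frac{2(k_i^2-1)}{(k_i+k_j)(k_i+k_k)},
\end{equation*}
obtained by subtracting numerator and denominator in \eqref{eq:k123}. It unifies the two regimes of Theorem~\ref{thm:T123}: the left side equals $\cos\theta_i$ when $k_i > 1$ and $\cosh\theta_i$ when $k_i < 1$. For $i \notin I$, at least one neighbour $k_j$ or $k_k$ tends to $+\infty$, so the right side tends to $1$ and $\theta_i \to 0$. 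A second-order expansion of $\cos$ or $\cosh$ then gives the uniform estimate
\begin{equation*}
l_i^2 \;:=\; (T_i/k_i)^2 \;\sim\; \frac{4}{(k_i+k_j)(k_i+k_k)} \;\longrightarrow\; 0,
\end{equation*}
and the transitional case $k_i = 1$ is recovered continuously from the explicit formula $l_i = 2/\sqrt{(k_j+1)(k_k+1)}$. Since $k_i$ is bounded, $T_i = k_i l_i \to 0$.

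For step (b), the preceding estimate covers $i \notin I$, while for $i \in I$ one has $l_i = T_i/k_i \le \pi/k_i \to 0$ because $T_i < \pi$. So the perimeter of $\Delta^*$ tends to zero, and since $\Delta^*$ is contained in a hyperbolic disk of radius at most its own perimeter, $\area(\Delta^*) \to 0$.

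The main obstacle I anticipate is keeping the estimate of $l_i$ uniform across the regimes $k_i > 1$, $k_i = 1$, $k_i < 1$ for $i \notin I$, particularly at the threshold $k_i = 1$ where both $\theta_i$ and $\sqrt{|1-k_i^2|}$ vanish simultaneously. The uniform algebraic identity above is precisely what dissolves this apparent singularity: the second-order cancellation $\theta_i \sim 2\sqrt{|k_i^2-1|}/\sqrt{(k_i+k_j)(k_i+k_k)}$ has exactly the right order for $l_i$ to have a finite limit, which is why one should work with $l_i$ rather than $T_i$ directly.
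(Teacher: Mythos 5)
Your reduction via Gauss--Bonnet to (a) $T_i \to 0$ for $i \notin I$ and (b) $\area(\Delta^*) \to 0$ is a natural plan, but step (b) has a genuine gap and step (a) a milder one, both stemming from the same overlooked regime. The hypothesis only says $k_i$ is \emph{bounded above} for $i \notin I$; it permits $k_i \to 0$. In that case $k_i + k_j$ can itself vanish, so $(k_i+k_j)(k_i+k_k)$ need not blow up even though $k_k \to \infty$, and then the right side of \eqref{eq:k123} does \emph{not} tend to $1$. Concretely, take $I = \{3\}$ and $k_1 = k_2 = k_3^{-2}$ with $k_3 \to \infty$: then $(k_1+k_2)(k_1+k_3) \sim 2/k_3 \to 0$, so $\cosh\theta_1 \sim k_3 \to \infty$ and $l_1 \to +\infty$. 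The perimeter of $\Delta^*$ is therefore unbounded, and your disk-containment argument gives nothing. (The area of $\Delta^*$ does still tend to $0$ — the region becomes a thin hyperbolic sliver of small area and unbounded perimeter — but you have not shown it.) The same example invalidates your asymptotic $l_i^2 \sim 4/((k_i+k_j)(k_i+k_k))$, since $\theta_i \not\to 0$; your conclusion $T_i \to 0$ remains correct, but only because $k_i \to 0$ overpowers the at-most-logarithmic growth of $l_i$, which needs a separate estimate (essentially the one in the paper's proof of Proposition~\ref{prop:cntnu_extd}). The obstacle you flagged, the threshold $k_i = 1$, is in fact benign; the real danger is $k_i \to 0$.

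The paper avoids both pitfalls by casing on $\lvert I \rvert$. For $\lvert I \rvert = 1$ it shows directly from \eqref{eq:k123} that $\cos\theta_i \to -1$ (divide numerator and denominator by $k_i^2$; no dual-triangle argument, no need for the other $T_j \to 0$). For $\lvert I \rvert \geq 2$, two curvatures blow up, which forces $k_P^2 = k_1k_2+k_2k_3+k_3k_1+1 \to \infty$ by Corollary~\ref{coro:k_P}; then $C_P$, which bounds a disk containing $\Delta^*$, collapses to a point, so $\area(\Delta^*) \to 0$. Notice that this $k_P$ argument genuinely requires two curvatures going to infinity — with only one, $k_P$ can stay bounded when the other two curvatures vanish, which is exactly why the split on $\lvert I \rvert$ is essential and a uniform treatment through $\Delta^*$ alone does not work.
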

\begin{proof}
    If $I=\{i\} $, by \eqref{eq:k123},
    $$ \cos\theta_i = \frac{2/k_i^2 + (k_j+k_k)/k_i + k_j k_k / k_i^2 -1}{(1+k_j/k_i)(1+k_k/k_i)} \to -1 \quad \textrm{as}\  k_i\to+\infty. $$
    Hence $\theta_i \to \pi$ and then $T_i \to \pi$.

    If $\lvert I \rvert \geq 2$, there are at least two of $k_i$'s tend to $+\infty$. 
    By Corollary \ref{coro:k_P}, we have
    $$ k_P^2 = k_1k_2 + k_2k_3 + k_3k_1 +1 \to +\infty \ . $$
    Hence the circle $C_P$ collapses to a point, and so does the dual triangle $\Delta^*$ which lies inside the dual triangle. By Gauss-Bonnet formula, 
    $T_1+T_2+T_3=\pi- \textrm{Area}(\Delta^*) \to \pi$.
    If $\lvert I \rvert =2$, we may assume $I=\{1,2\}$ and by \eqref{eq:k123} again we have
    \[ \cos\theta_3 \ \textrm{or} \ \cosh\theta_3 = \frac{2/k_1k_2+1+k_3/k_1+k_3/k_2-k_3^2/k_1k_2}{(1+k_3/k_1)(1+k_3/k_2)} \to 1 \quad \textrm{as}\  k_1,k_2\to+\infty . \]
    Hence $T_3\to 0$ in this case, and $T_1+T_2\to\pi$.
\end{proof}
Note that the limit of individual $T_i$ may not always exist. 

\begin{proposition}\label{prop:loc_homeo_strat}
    For any $I\subset \{1,2,3\}$, $\widehat{\Phi}^{\Delta}$ maps $\partial^{I} \Omega_k^\Delta$ homeomorphically onto $\partial_0^I \Omega_T^\Delta$.
\end{proposition}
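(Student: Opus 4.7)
The case $I=\varnothing$ is the result of \cite{BHS} recalled in (\ref{map:Phi_T_Delta}), while the case $I=\{1,2,3\}$ is tautological since both $\partial^{\{1,2,3\}}\Omega_k^\Delta$ and $\partial_0^{\{1,2,3\}}\Omega_T^\Delta$ reduce to the single point $(0,0,0)$, matched by Proposition \ref{prop:cntnu_extd}. The content is therefore the case when $I$ is a non-empty proper subset of $\{1,2,3\}$, for which I plan to repeat, stratum by stratum, the variational argument of \cite{BHS}, using exactly the pieces already assembled in this subsection.

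Fix such an $I$ and compose the log chart (\ref{map:ln_loc}) with $\widehat{\Phi}^\Delta|_{\partial^I \Omega_k^\Delta}$ to obtain the $C^1$ map $\vec{S}\mapsto (T_j)_{j\notin I}$ on $\mathbb{R}^{3-|I|}$. By (\ref{def:1form_loc}) and (\ref{def:energt_loc}) this map is precisely $\nabla\mathcal{E}^I$, and Lemma \ref{lem:convex} gives strict convexity of $\mathcal{E}^I$, so Lemma \ref{lem:embedding} exhibits $\nabla\mathcal{E}^I$ as a smooth embedding of $\mathbb{R}^{3-|I|}$ into $\mathbb{R}^{3-|I|}$. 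Pulled back through the log chart, this shows that $\widehat{\Phi}^\Delta|_{\partial^I \Omega_k^\Delta}$ is a homeomorphism onto its image, and by Proposition \ref{prop:cntnu_extd} that image sits inside $\partial_0^I\Omega_T^\Delta$.

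It remains to prove surjectivity onto $\partial_0^I \Omega_T^\Delta$. This target is convex in $\mathbb{R}^{3-|I|}$, hence connected, and the image is open by invariance of domain (source and target have the same dimension), so I only need relative closedness. Given a sequence $\vec{k}^n \in \partial^I \Omega_k^\Delta$ with $\widehat{\Phi}^\Delta(\vec{k}^n)\to\vec{T}^*\in \partial_0^I\Omega_T^\Delta$, pass to a subsequence so each coordinate $k_j^n$ converges in $[0,+\infty]$. If $k_j^n\to+\infty$ for all $j$ in some non-empty $J\subset\{1,2,3\}\setminus I$, Lemma \ref{lem:limit} forces $\sum_{j\in J} T_j^* = \pi$, contradicting $\sum_{j\notin I} T_j^*<\pi$. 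If all $k_j^n$ remain bounded but $k_j^n\to 0$ for some $j\notin I$, the limit $\vec{k}^*$ lies in a deeper stratum $\partial^{I'}\Omega_k^\Delta$ with $I\subsetneq I'$, so by the continuity half of Proposition \ref{prop:cntnu_extd} we have $\vec{T}^*=\widehat{\Phi}^\Delta(\vec{k}^*)\in \partial_0^{I'}\Omega_T^\Delta$, which is disjoint from $\partial_0^I\Omega_T^\Delta$, a contradiction. Hence the $\vec{k}^n$ subconverge inside $\partial^I\Omega_k^\Delta$ to some $\vec{k}^*$ with $\widehat{\Phi}^\Delta(\vec{k}^*)=\vec{T}^*$, so $\vec{T}^*$ lies in the image.

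The main technical step is this closedness argument, but both degenerations it must exclude, blow-up of a coordinate outside $I$ and collapse onto a deeper stratum, are already controlled by Lemma \ref{lem:limit} and Proposition \ref{prop:cntnu_extd} respectively; everything else is a routine transfer of the variational principle of \cite{BHS} from the interior to each boundary stratum.
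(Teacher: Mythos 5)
Your proposal is correct, and the first half, composing the log chart with the energy function $\mathcal{E}^I$, invoking Lemma \ref{lem:convex} and Lemma \ref{lem:embedding}, and then Proposition \ref{prop:cntnu_extd} to locate the image, coincides with the paper's argument. Where you diverge is in establishing surjectivity onto $\partial_0^I\Omega_T^\Delta$. The paper exploits the convexity of the image of the gradient of a strictly convex $C^1$ function: it produces sequences whose $\Phi^\Delta$-images limit onto the simplex vertices $(0,0,0)$ and $\vec{e}_j$ (for $j\notin I$), using Lemma \ref{lem:limit} together with Gauss--Bonnet, and then concludes that the open convex image must be the full open simplex. You instead run an open-and-closed connectivity argument: invariance of domain gives openness, and relative closedness is shown by ruling out the two possible degenerations of a minimizing sequence, blow-up of some $k_j$ (excluded by Lemma \ref{lem:limit} against the constraint $\sum T_j^*<\pi$) and collapse onto a deeper stratum $\partial^{I'}\Omega_k^\Delta$, $I\subsetneq I'$ (excluded because Proposition \ref{prop:cntnu_extd} then forces $\vec{T}^*$ into the disjoint stratum $\partial_0^{I'}\Omega_T^\Delta$). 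Both routes are valid and use exactly the same supporting lemmas; the paper's is slightly shorter once one accepts the convexity-of-the-image fact, while yours is arguably more self-contained since it doesn't rely on that black box and instead makes the boundary analysis explicit. One small point of hygiene: when you "pass to a subsequence so each coordinate $k_j^n$ converges in $[0,+\infty]$", you should define $J$ as the \emph{full} set of indices $j$ with $k_j^n\to+\infty$, so that the complementary coordinates stay bounded and Lemma \ref{lem:limit} applies verbatim; as written ("for all $j$ in some non-empty $J$") the hypothesis on the remaining indices is implicit rather than stated.
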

\begin{proof}
    Obviously, $\widehat{\Phi}^{\Delta}$ maps $(0,0,0)$ to $(0,0,0)$. 
    Let $\vec{e}_1=(\pi,0,0), \vec{e}_2=(0,\pi,0), \vec{e}_3=(0,0,\pi)$ be the other three vertices of $\Omega_T^\Delta$.

    For any proper $I\subset\{1,2,3\}$, by the definition of energy function $\mathcal{E}^I$ in \eqref{def:1form_loc} and \eqref{def:energt_loc}, we have
    $$ \frac{\partial \mathcal{E}^I}{ \partial S_j} = T_j(e^{S_1},e^{S_2},e^{S_3}) \ ,\quad \textrm{for all } j\notin I \ . $$ 
    By Lemma \ref{lem:embedding} and Lemma \ref{lem:convex}, $\nabla \mathcal{E}^I$ is a smooth embedding of $\mathbb{R}^{3-\lvert I \rvert}$. Composing with the diffeomorphism $\ln$, $\widehat{\Phi}^{\Delta}$ is a homeomorphism of $\partial^{I} \Omega_k^\Delta$ onto its image. 

    Now for any $j\notin I$, let $\{(k_1^n, k_2^n, k_3^n)\}_{n=1}^\infty$ be a sequence in $\partial^{I} \Omega_k^\Delta$ satisfying the first case of Lemma \ref{lem:limit}. Together with Guass-Bonnet formula, we have 
    $$\lim_{n\to\infty} \widehat{\Phi}^{\Delta}(k_1^n, k_2^n, k_3^n) = \vec{e_j} \ .$$ 
    It is a basic fact that the gradient of a strictly convex $C^1$-function on $\mathbb{R}^n$ is a homeomorphism from $\mathbb{R}^n$ to a convex domain in $\mathbb{R}^n$. 
    Thus, $\widehat{\Phi}^{\Delta} \left( \partial^{I} \Omega_k^\Delta \right)$ must contain the open convex hull of $(0,0,0)$ and $\vec{e}_j$ for $j\notin I$. But this is exactly $\partial_0^I \Omega_T^\Delta$. Hence, with Proposition \ref{prop:cntnu_extd}, $\widehat{\Phi}^{\Delta} \left( \partial^{I} \Omega_k^\Delta \right) = \partial_0^I \Omega_T^\Delta$ and the map is a homeomorphism.
\end{proof}

\begin{proposition}\label{prop:local_homeo}
    The extended map $\widehat{\Phi}^{\Delta} : \overline{\Omega}_k^{\Delta} \to \widehat{\Omega}_T^{\Delta}$ is a homeomorphism.
\end{proposition}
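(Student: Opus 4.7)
The plan is to combine the stratum-by-stratum homeomorphism of Proposition \ref{prop:loc_homeo_strat} with the global continuity of Proposition \ref{prop:cntnu_extd}, and then upgrade to a full homeomorphism via a properness argument powered by Lemma \ref{lem:limit}.

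First I would assemble bijectivity of $\widehat{\Phi}^{\Delta}$ from the stratum data. The decompositions
\[
\overline{\Omega}_k^{\Delta} \;=\; \bigsqcup_{I\subset\{1,2,3\}} \partial^I\Omega_k^\Delta, \qquad \widehat{\Omega}_T^{\Delta} \;=\; \bigsqcup_{I\subset\{1,2,3\}} \partial_0^I \Omega_T^\Delta
\]
are disjoint. By Proposition \ref{prop:loc_homeo_strat} the extension sends each $\partial^I\Omega_k^\Delta$ bijectively onto $\partial_0^I\Omega_T^\Delta$, so these stratum-wise bijections assemble to a global bijection $\widehat{\Phi}^{\Delta}:\overline{\Omega}_k^{\Delta}\to\widehat{\Omega}_T^{\Delta}$. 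Global continuity is provided by Proposition \ref{prop:cntnu_extd}.

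Next, to establish continuity of the inverse, I would argue by sequences. Let $\vec{T}^{\,n}\to\vec{T}^{\,*}$ in $\widehat{\Omega}_T^{\Delta}$ and set $\vec{k}^{\,n}:=(\widehat{\Phi}^{\Delta})^{-1}(\vec{T}^{\,n})$, $\vec{k}^{\,*}:=(\widehat{\Phi}^{\Delta})^{-1}(\vec{T}^{\,*})$. The crucial step is to show that $\{\vec{k}^{\,n}\}$ is bounded in $\mathbb{R}_{\geq 0}^3$. Suppose not; after passing to a subsequence there is a non-empty $I'\subset\{1,2,3\}$ such that $k_i^n\to+\infty$ for $i\in I'$ while $k_i^n$ stays bounded for $i\notin I'$. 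Lemma \ref{lem:limit} then yields $\sum_{i\in I'}T_i^n\to\pi$, and since all $T_j^n\geq 0$ this forces $T_1^n+T_2^n+T_3^n\to\pi$, contradicting $T_1^*+T_2^*+T_3^*<\pi$. Hence $\{\vec{k}^{\,n}\}$ is bounded. Any convergent subsequence $\vec{k}^{\,n_j}\to\vec{k}^{\,\prime}\in\mathbb{R}_{\geq 0}^3$ satisfies $\widehat{\Phi}^{\Delta}(\vec{k}^{\,\prime})=\vec{T}^{\,*}$ by continuity, so $\vec{k}^{\,\prime}=\vec{k}^{\,*}$ by injectivity. A standard subsequence argument then gives $\vec{k}^{\,n}\to\vec{k}^{\,*}$, which is continuity of $(\widehat{\Phi}^{\Delta})^{-1}$.

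The main obstacle is precisely the properness step: ruling out escape to infinity in the $k$-coordinates when $\vec{T}^{\,n}$ stays in $\widehat{\Omega}_T^\Delta$. This is exactly what Lemma \ref{lem:limit} delivers, since it identifies divergence of any non-empty subset of $k$-coordinates with $\vec{T}^{\,n}$ approaching the inaccessible face $\{T_1+T_2+T_3=\pi\}$ that has been excised from $\widehat{\Omega}_T^\Delta$. Once boundedness is secured, the remaining work is purely topological bookkeeping within Hausdorff spaces.
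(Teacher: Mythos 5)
Your proposal is correct and follows essentially the same route as the paper: continuous bijectivity from Propositions \ref{prop:cntnu_extd} and \ref{prop:loc_homeo_strat}, then continuity of the inverse via a properness argument using Lemma \ref{lem:limit} to rule out escape to infinity, finishing with a standard subsequence extraction. The only cosmetic difference is that you spell out the stratum-wise disjoint decompositions explicitly, which the paper leaves implicit.
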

\begin{proof}
    By Proposition \ref{prop:cntnu_extd} and \ref{prop:loc_homeo_strat}, $\widehat{\Phi}^{\Delta}$ is a continuous bijection from $\overline{\Omega}_k^{\Delta} $ to $\widehat{\Omega}_T^{\Delta}$.
    We only need to show that the inverse map $(\widehat{\Phi}^{\Delta})^{-1}$ is also continuous. 
    Let $\{\vec{T}^n\}_{n=1}^{+\infty}$ be a sequence in $\widehat{\Omega}_T^{\Delta}$ that converges to some $\vec{T}^* \in \partial_0^I \Omega_T^\Delta$. Let $\vec{k}^n$ and $\vec{k}^*$ be the preimage of $\vec{T}^n$ and $\vec{T}^*$ under $\widehat{\Phi}^{\Delta}$, respectively. Then we only need to show that $\lim_{n\to\infty} \vec{k}^n = \vec{k}^*$. 
    
    We first show that $\{\vec{k}^n\}_{n=1}^\infty$ is bounded in $\overline{\Omega}_k^{\Delta} = \mathbb{R}_{\geq0}^3$. If not, by passing to a subsequence, we may assume that $\{\vec{k}^n\}$ satisfy the condition of Lemma \ref{lem:limit} for some non-empty $J\subset \{1,2,3\}$. 
    Then $T_1^*+T_2^*+T_3^* = \lim(T_1^n+T_2^n+T_3^n) = \pi$ by Lemma \ref{lem:limit}. This contradicts the assumption $\lim \widehat{\Phi}^\Delta (\vec{k}_n) = \vec{T}^* \in \partial_0^I \Omega_T^\Delta$ that requires $T_1^*+T_2^*+T_3^* <\pi$.
    
    If $\lim \vec{k}^n \neq \vec{k}^*$, since $\{\vec{k}^n\}_{n=1}^{+\infty}$ is bounded, there exists a subsequence $\{\vec{k}^{n_{i}}\}_{i=1}^{+\infty}$ such that 
    $$ \lim_{n_i\to\infty} \vec{k}^{n_{i}} = \vec{k'} \neq \vec{k}^* \ . $$ 
    Since $\widehat{\Phi}^{\Delta}$ is continuous, we have 
    $$ \widehat{\Phi}^{\Delta} (\vec{k'})= \lim_{n_i\to\infty} \widehat{\Phi}^{\Delta}(\vec{k}^{n_i}) = \lim_{n_i\to\infty}\vec{T}^{n_i} = \vec{T}^* \ . $$ 
    However, $\widehat{\Phi}^{\Delta}(\vec{k}^*) = \vec{T}^*$ by assumption. This contradicts the bijectivity of $\widehat{\Phi}^{\Delta}$. Hence $(\widehat{\Phi}^{\Delta})^{-1}$ is continuous. This finishes the proof completely.
\end{proof}

\subsection{Homeomorphism in every stratum}
Now we turn to circle packed surfaces. 
The results are essentially obtained by summing up local results for three-circle configurations.

By Proposition \ref{prop:cntnu_extd} and the Gauss-Bonnet formula, the homeomorphism $\Phi: \Omega_k \to \Omega_T$ extends continuously to $\widehat{\Phi}: \overline{\Omega}_k \to \widehat{\Omega}_T$, with $\widehat{\Phi}(\partial^I \Omega_k) \subset \partial_0^I \Omega_T$ for any non-empty $I \subset V$. In this subsection, we obtain the following proposition, which is a global version of Proposition \ref{prop:loc_homeo_strat}. The proof is inspired by the work of 
Bobenko-Springborn \cite{bo}. 

\begin{proposition}\label{prop:strata}
    For any $I\subseteq V$, the restricted map $\widehat{\Phi}: \partial^I\Omega_k \to \partial^I_0 \Omega_T$ is a homeomorphism.
\end{proposition}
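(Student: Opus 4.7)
The proof follows the strategy of Proposition \ref{prop:loc_homeo_strat} for a single triangle, now assembled across the triangulation. The case $I=V$ is trivial and $I=\varnothing$ is Theorem \ref{mr}, so fix a non-empty proper $I\subsetneq V$ and use the logarithmic coordinates $S_j := \ln k_j$ for $j\in V\setminus I$ to identify $\partial^I\Omega_k$ with $\mathbb{R}^{V\setminus I}$. On the latter consider the 1-form
$$\omega^I := \sum_{j\notin I} T_j(\vec{k})\,\mathrm{d}S_j = \sum_{f\in F} \omega_\Delta^{I\cap f},$$
where each summand is the pullback to $\mathbb{R}^{V\setminus I}$ of the local 1-form \eqref{def:1form_loc} associated to face $f$ (and is zero when $I$ already contains all three vertices of $f$). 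Summing Lemma \ref{lem:closed_form} face-by-face, $\omega^I$ is closed and hence admits a $C^2$ primitive $\mathcal{E}^I$. Its Hessian is a sum of pullbacks of the local Hessians $\mathrm{Hess}\,\mathcal{E}^{I\cap f}$, each positive semi-definite by Lemma \ref{lem:convex}. Given any $\vec{v}\neq 0$ in $\mathbb{R}^{V\setminus I}$, pick $j$ with $v_j\neq 0$ and any face $f\ni j$; then $j\notin I\cap f$, the projection of $\vec v$ to $\mathbb{R}^{f\setminus(I\cap f)}$ is nonzero, and strict convexity of $\mathcal{E}^{I\cap f}$ makes that face's contribution to the global Hessian strictly positive. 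Hence $\mathcal{E}^I$ is strictly convex.

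By Lemma \ref{lem:embedding}, $\nabla \mathcal{E}^I$ is a smooth embedding of $\mathbb{R}^{V\setminus I}$; unwinding coordinates, $\widehat{\Phi}|_{\partial^I\Omega_k}:\partial^I\Omega_k\to \partial_0^I\Omega_T$ is a topological open embedding (the containment of the image in $\partial_0^I\Omega_T$ having been noted just above the proposition). Since $\partial_0^I\Omega_T$ is cut out of the affine slice $\{T_i=0 : i\in I\}$ by the linear strict inequalities $T_j>0$ and $\eta^{I'}>0$, it is convex and hence connected. It therefore suffices to prove that $\widehat{\Phi}|_{\partial^I\Omega_k}$ is \emph{proper}: then its image is simultaneously open, closed, and non-empty in a connected codomain, forcing surjectivity, and the inverse is automatically continuous.

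Properness is the main obstacle. Let $\vec{k}^n\in\partial^I\Omega_k$ with $\vec{T}^n:=\widehat{\Phi}(\vec{k}^n)\to\vec{T}^*\in\partial_0^I\Omega_T$. If $k_j^n\to 0$ along a subsequence for some $j\notin I$, then Proposition \ref{prop:cntnu_extd} applied to each face containing $j$ gives $T_j^n\to 0$, contradicting $T_j^*>0$. Hence each $k_j^n$ ($j\notin I$) is bounded away from $0$. Passing to a further subsequence, set $J:=\{j\notin I : k_j^n\to +\infty\}$. If $J\neq\varnothing$, Lemma \ref{lem:limit} applied face-by-face gives $\sum_{j\in J\cap f} T_j^{f,n}\to \pi$ for every $f\in F_J$, so
$$\sum_{j\in J} T_j^* = \lim_{n\to\infty}\sum_{f\in F_J}\sum_{j\in J\cap f} T_j^{f,n} = \pi|F_J|,$$
which forces $\eta^J(\vec{T}^*)=0$ and contradicts $\vec{T}^*\in\widehat\Omega_T$. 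Therefore $J=\varnothing$, the sequence $\{\vec{k}^n\}$ is relatively compact in $\partial^I\Omega_k$, and any limit point maps to $\vec{T}^*$ by continuity. This proves properness and completes the proof.
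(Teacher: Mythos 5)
Your argument is correct, but it takes a genuinely different route from the paper. You establish, as the paper does, that the restriction of $\widehat\Phi$ to the stratum is a smooth open embedding via the (unmodified) global energy $\mathcal{E}^I$ with 1-form $\omega^I=\sum_f\omega^{I\cap f}_\Delta$, and then you obtain surjectivity through a properness-plus-connectedness argument: any sequence $\vec{k}^n\in\partial^I\Omega_k$ whose images converge in $\partial_0^I\Omega_T$ subconverges in $\partial^I\Omega_k$, because a coordinate tending to $0$ would force $T_j^n\to0$ (by the estimate in Proposition \ref{prop:cntnu_extd}), while a nonempty set $J$ of coordinates tending to $+\infty$ would force $\eta^J(\vec{T}^*)=0$ (by Lemma \ref{lem:limit}); since $\partial_0^I\Omega_T$ is convex and hence connected, the open-and-closed image must be everything. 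The paper instead splits the statement into Propositions \ref{prop:image} and \ref{prop:co}: it introduces the coherent system $\mathcal{CO}^I$ of face-wise curvature prescriptions and the summing map $\Psi$, shows $\widehat\Phi(\partial^I\Omega_k)=\Psi(\mathcal{CO}^I)$ by minimizing the \emph{shifted} energy $\widetilde{\mathcal{E}}^I=\sum_f\int(\omega^I_f-\sum T^f_i\,\mathrm{d}S_i)$ (whose coercivity comes from the face-wise existence result of Proposition \ref{prop:local_homeo}), and then proves $\Psi(\mathcal{CO}^I)=\partial_0^I\Omega_T$ via the Feasible Flow Theorem, a linear-programming/network-flow argument in the spirit of Bobenko--Springborn. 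Your approach avoids the coherent-system and feasible-flow machinery altogether, trading it for the global asymptotic boundedness argument — which, interestingly, is essentially the same subsequence argument the paper deploys later, in the proof of Theorem \ref{Main_result_1}, to show $\widehat\Phi^{-1}$ is continuous; you are using it earlier and getting surjectivity for free. Both proofs are valid; yours is shorter and more topological, while the paper's is more constructive (producing an explicit coherent datum that matches the prescription) and makes the combinatorial structure of $\widehat\Omega_T$ transparent. One small presentational caveat: you should state explicitly that the boundedness argument is run after passing to a subsequence along which each coordinate has a definite limit in $[0,+\infty]$, so that Proposition \ref{prop:cntnu_extd} and Lemma \ref{lem:limit} apply cleanly face by face.
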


\begin{remark}
    If $I=V$, $\widehat{\Phi}$ maps the origin point to the origin point. 
    By taking $I = \varnothing$, Proposition \ref{prop:strata} implies Theorem \ref{mr}. 
    This subsection also provides an alternative proof.  
\end{remark}

Proposition \ref{prop:strata} will be proved as following: 
  
Denote $v\in f$ if $v\in V$ is a vertex of triangle $f\in F$.
For a fixed proper set $I\subset V$, let $\overline{V}:=V\setminus I$ and $\overline{F}:=F_{V\setminus I}$. Note that $f\in \overline{F}$ if and only if $f$ contains at least one vertex outside $I$. Since $I\neq V$, both $\overline{V}$ and $\overline{F}$ are non-empty.
Let
\begin{equation}\label{def:ln}
    \ln : \partial^I\Omega_k \to \Omega^I_S := \mathbb{R}^{\overline{V}}
\end{equation}
be the diffeomorphism of the stratum defined by $\vec{k}=(k_i)_{i\in V} \mapsto \vec{S}=( \ln k_i )_{i \notin I}$. 
We will define a set $\mathcal{CO}^I$, called ``coherent system'' (Definition \ref{def:CO}), with a natural embedding $\iota : \Omega_S^I \hookrightarrow \mathcal{CO}^I$ \eqref{map:iota} and a projection $\Psi: \mathcal{CO}^I \to \partial_0^I\Omega_T$ \eqref{map:psi}, such that on each stratum $\partial^I \Omega_k$, the map $\widehat{\Phi}$ is expressed as a composition
\begin{equation}\label{diagram_spaces}
\begin{tikzcd}[row sep=large, column sep=large]  
\partial^I\Omega_k \arrow[r,"\ln","\cong"'] \arrow[rrr,to path={|- ([yshift=-3.25ex]\tikztostart.south) -- node[above, pos=3.2]{\small $\widehat{\Phi}= \Psi \circ \iota \circ \ln$}([xshift=3ex,yshift=-5ex]\tikztostart.east) -| (\tikztotarget.south)}] & \Omega_S^{I} \arrow[r, hookrightarrow, "\iota"] & \mathcal{C}O^I \arrow[r, "\Psi"] & \partial_0^{I}\Omega_T  \ . 
\end{tikzcd}  
\end{equation}

Proposition \ref{prop:strata} is then deduced from the following two propositions, which will be proved in \S \ref{sssec:coh.sys.} and \S \ref{sssec:feas.flow} respectively.
\begin{proposition}\label{prop:image}
    $\widehat{\Phi}$ maps $\partial^I\Omega_k$ homeomorphically onto $\Psi(\mathcal{CO}^I)$.
\end{proposition}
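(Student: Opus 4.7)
The plan is to realize $\widehat{\Phi}|_{\partial^I \Omega_k}$ as the composition $\nabla \mathcal{E}^I \circ \ln$ for a strictly convex $C^2$ function $\mathcal{E}^I$ on $\Omega_S^I = \mathbb{R}^{\overline{V}}$, then apply Lemma \ref{lem:embedding} to get a smooth embedding, and finally identify the image with $\Psi(\mathcal{CO}^I)$ via an asymptotic analysis at the boundary.

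First I would assemble the global 1-form
$$ \omega^I := \sum_{v \in \overline{V}} T_v(e^{\vec{S}}) \, dS_v = \sum_{f \in \overline{F}} \pi_f^* \omega_\Delta^{I_f} $$
on $\mathbb{R}^{\overline{V}}$, where $I_f := I \cap f$ and $\pi_f$ denotes the coordinate projection onto $\mathbb{R}^{f \cap \overline{V}}$. Each $\omega_\Delta^{I_f}$ is closed by Lemma \ref{lem:closed_form}, so $\omega^I$ is closed and its integral
$$ \mathcal{E}^I(\vec{S}) := \int_{\vec{S}_0}^{\vec{S}} \omega^I $$
defines a $C^2$ function on $\mathbb{R}^{\overline{V}}$. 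The Hessian of $\mathcal{E}^I$ splits as the sum of the pulled-back Hessians of the local energies $\mathcal{E}^{I_f}$; by Lemma \ref{lem:convex} each summand is positive semi-definite on $\mathbb{R}^{\overline{V}}$, with kernel consisting of vectors vanishing on $f \cap \overline{V}$. Since every $v \in \overline{V}$ lies in some $f \in \overline{F}$, the intersection of all these kernels is trivial and the total Hessian is positive definite, so $\mathcal{E}^I$ is strictly convex on $\mathbb{R}^{\overline{V}}$.

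Lemma \ref{lem:embedding} then implies that $\nabla \mathcal{E}^I$ is a smooth embedding of $\mathbb{R}^{\overline{V}}$ onto an open convex subset $U \subseteq \mathbb{R}^{\overline{V}}$. Since $\partial \mathcal{E}^I / \partial S_v = T_v$ for each $v \in \overline{V}$, this gradient coincides with $\widehat{\Phi} \circ \ln^{-1}$ on $\Omega_S^I$, so $\widehat{\Phi}|_{\partial^I \Omega_k}$ is itself a homeomorphism onto $U$.

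The hardest step is to match $U$ with $\Psi(\mathcal{CO}^I)$. The inclusion $U \subseteq \Psi(\mathcal{CO}^I)$ is immediate from the factorization \eqref{diagram_spaces}. For the reverse, I would mimic the compactness argument from the proof of Proposition \ref{prop:local_homeo}: given $\vec{T}^* \in \Psi(\mathcal{CO}^I)$ and a sequence $\vec{T}^n \in U$ with $\vec{T}^n \to \vec{T}^*$, pick preimages $\vec{k}^n \in \partial^I \Omega_k$ and show $\{\vec{k}^n\}$ is pre-compact. After passing to a subsequence, set $J := \{ v \in \overline{V} : k_v^n \to +\infty \}$. If $J \neq \varnothing$, then Lemma \ref{lem:limit} applied triangle-wise gives
$$ \sum_{v \in J} T_v^n \;=\; \sum_{f \in F_J} \sum_{v \in J \cap f} T_v^f \;\longrightarrow\; \pi \lvert F_J \rvert, $$
forcing $\eta^J(\vec{T}^*) = 0$ and contradicting $\vec{T}^* \in \widehat{\Omega}_T$. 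A parallel argument via Proposition \ref{prop:cntnu_extd} rules out $k_v^n \to 0$ for $v \in \overline{V}$, since otherwise $T_v^* = 0$ would contradict $v \notin I$. Thus $\{\vec{k}^n\}$ stays in a compact subset of $\partial^I \Omega_k$, and continuity of $\widehat{\Phi}$ together with any convergent subsequence produces a preimage of $\vec{T}^*$, completing the surjectivity and hence the homeomorphism statement.
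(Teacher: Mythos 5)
Your proof of strict convexity of the unmodified energy $\mathcal{E}^I$ and the resulting homeomorphism-onto-image via Lemma~\ref{lem:embedding} essentially coincides with the paper's: the paper works with a shifted functional $\widetilde{\mathcal{E}}^I = \mathcal{E}^I - \sum_v T_v S_v$ (same Hessian, gradient translated by the constant $\vec{T}$), so the two presentations are equivalent for that part.

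Where you genuinely diverge is in proving the image is all of $\Psi(\mathcal{CO}^I)$. The paper takes a variational route: for a fixed target $\vec{T} = \Psi((T^f_v))$, it builds $\widetilde{\mathcal{E}}^I = \sum_f \widetilde{\mathcal{E}}^I_f$ by subtracting the linear form $\sum_v T^f_v \, dS_v$ inside each triangle, uses Proposition~\ref{prop:local_homeo} to locate a critical point of each $\widetilde{\omega}^I_f$ (hence a lower bound and directional coercivity for each $\widetilde{\mathcal{E}}^I_f$ via Lemma~\ref{lem:coercive}), and sums these to get global coercivity of $\widetilde{\mathcal{E}}^I$; the unique minimizer is then the preimage of $\vec{T}$. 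You replace this coercivity step by a topological one: $U := \nabla\mathcal{E}^I(\mathbb{R}^{\overline{V}})$ is open (and in fact convex), $U \subseteq \Psi(\mathcal{CO}^I)$ by the factorization, and your compactness argument (in the spirit of Lemma~\ref{lem:limit} and Proposition~\ref{prop:cntnu_extd}) shows that $U$ is closed in $\Psi(\mathcal{CO}^I)$. This avoids the coercivity estimate entirely and is a legitimate alternative.

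However, as written your surjectivity step has a gap. You begin ``given $\vec{T}^* \in \Psi(\mathcal{CO}^I)$ and a sequence $\vec{T}^n \in U$ with $\vec{T}^n \to \vec{T}^*$'' --- but nothing guarantees such a sequence exists for an arbitrary $\vec{T}^*$; closedness of $U$ in $\Psi(\mathcal{CO}^I)$ alone does not imply $U = \Psi(\mathcal{CO}^I)$. You need to add that $\Psi(\mathcal{CO}^I)$ is connected (it is the image of the convex set $\mathcal{CO}^I$ under the linear map $\Psi$), so a nonempty subset that is both open and closed in it must be the whole set. Without this step, the reverse inclusion $\Psi(\mathcal{CO}^I) \subseteq U$ does not follow from what you wrote. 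Once that is inserted, your precompactness argument (ruling out $k^n_v \to +\infty$ via Lemma~\ref{lem:limit} and $k^n_v \to 0$ via the estimate in Proposition~\ref{prop:cntnu_extd}, the latter contradicting $T^*_v > 0$ for $v \notin I$) does close the loop.
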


\begin{proposition}\label{prop:co}
    $\Psi(\mathcal{CO}^I) = \partial_0^I\Omega_T$.
\end{proposition}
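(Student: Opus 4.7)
The plan is to establish the equality $\Psi(\mathcal{CO}^I) = \partial_0^I\Omega_T$ by a clopen/connectedness argument. By Proposition \ref{prop:cntnu_extd} applied to each generalized triangle and the combinatorial Gauss--Bonnet estimate, the inclusion $\Psi(\mathcal{CO}^I)\subseteq \partial_0^I\Omega_T$ is already known; the content lies in the reverse inclusion. First I would note that $\partial_0^I\Omega_T$ is connected: it is the intersection of the affine subspace $\{T_i=0 \text{ for } i\in I\}$ of $\mathbb{R}^V$ with the open convex polyhedron defined by the strict inequalities $T_j>0$ ($j\in\overline{V}$) and $\eta^J(\vec{T})>0$ for all non-empty $J\subset V$, so it is convex. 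It therefore suffices to show $\Psi(\mathcal{CO}^I)$ is non-empty, open, and closed in $\partial_0^I\Omega_T$.

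Non-emptiness is immediate since $\partial^I\Omega_k$ is non-empty and $\widehat{\Phi}$ is defined on it. Openness follows from invariance of domain: by Proposition \ref{prop:image} the restricted map $\widehat{\Phi}:\partial^I\Omega_k\to\Psi(\mathcal{CO}^I)$ is a homeomorphism; its source $\partial^I\Omega_k$ is diffeomorphic to $\mathbb{R}^{\overline{V}}$ via the map $\ln$ in \eqref{def:ln}, and its target lies in a $|\overline{V}|$-dimensional affine subspace. A continuous injection between spaces of equal dimension has open image.

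The heart of the argument is closedness. Take a sequence $\vec{T}^n=\widehat{\Phi}(\vec{k}^n)\in\Psi(\mathcal{CO}^I)$ with $\vec{T}^n\to\vec{T}^*\in\partial_0^I\Omega_T$; I want to extract a subsequential limit of $\vec{k}^n$ in $\partial^I\Omega_k$. I would rule out escape at infinity in two ways. \textbf{No $k_v^n\to+\infty$ for $v\in\overline{V}$.} If after passing to a subsequence the set $J:=\{v\in\overline{V}:k_v^n\to+\infty\}$ is non-empty, then by exchanging the order of summation
\[
\sum_{v\in J} T_v^n = \sum_{f\in F_J}\sum_{v\in J\cap f} T_v^f(\vec{k}^n).
\]
For each $f\in F_J$ the hypotheses of Lemma \ref{lem:limit} are satisfied with distinguished subset $J\cap f$: the remaining vertices of $f$ lie in $I$ (with $k$-value $0$) or in $\overline{V}\setminus J$ (with $k$-values remaining in a compact range after passing to a further subsequence). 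Hence $\sum_{v\in J\cap f}T_v^f(\vec{k}^n)\to\pi$, and summing over $F_J$ yields $\sum_{v\in J}T_v^*=\pi|F_J|$, i.e.\ $\eta^J(\vec{T}^*)=0$, contradicting $\vec{T}^*\in\widehat{\Omega}_T$. \textbf{No $k_v^n\to 0^+$ for $v\in\overline{V}$.} If $k_{v_0}^n\to 0$ for some $v_0\in\overline{V}$, then $T_{v_0}^n=\sum_{f\in F_{\{v_0\}}} k_{v_0}^n\cdot l_{v_0}^f(\vec{k}^n)\to 0$ by Proposition \ref{prop:cntnu_extd} applied triangle-wise (the arc lengths stay bounded on the compact piece of parameter space), forcing $T_{v_0}^*=0$ and contradicting $v_0\notin I$. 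Consequently $\{\vec{k}^n\}$ lies in a compact subset of $\partial^I\Omega_k$, any subsequential limit $\vec{k}^*\in\partial^I\Omega_k$ satisfies $\widehat{\Phi}(\vec{k}^*)=\vec{T}^*$ by continuity, and $\vec{T}^*\in\Psi(\mathcal{CO}^I)$.

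The main obstacle is the boundedness-from-above step. The key choice is to take the degenerating set $J$ as a subset of $\overline{V}$ alone rather than $I\cup J$, so that Lemma \ref{lem:limit} yields exactly the critical equality $\eta^J(\vec{T}^*)=\pi|F_J|-\sum_{v\in J}T_v^*=0$, which directly contradicts $\vec{T}^*\in\widehat{\Omega}_T$. This is where the combinatorics encoded in the inequalities $\eta^J>0$ is forced to talk to the triangle-level asymptotic behavior of total geodesic curvatures.
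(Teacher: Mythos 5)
Your proof is correct but takes a genuinely different route from the paper. The paper proves Proposition~\ref{prop:co} by applying the Feasible Flow Theorem (Theorem~\ref{thm:FFT}) to a purpose-built oriented network $(N,X,\mathcal{I})$: Lemma~\ref{lem:equiv_ff} reduces the surjectivity of $\Psi$ to the existence of a feasible flow, and Proposition~\ref{prop:always_ff} constructs such a flow by a direct case-by-case verification of the cut conditions~\eqref{eq:FFT}, with the inequalities $\eta^{V'}(\vec{M})>0$ entering precisely in Case~(B.2). Your argument instead exploits the topological structure already available from Proposition~\ref{prop:image}: since $\widehat{\Phi}$ restricted to $\partial^I\Omega_k$ is a continuous injection onto $\Psi(\mathcal{CO}^I)$, and $\partial_0^I\Omega_T$ is a convex (hence connected) open subset of a $|\overline{V}|$-dimensional affine slice, you need only verify that $\Psi(\mathcal{CO}^I)$ is nonempty, open (by invariance of domain), and closed (ruling out escape to $\infty$ via Lemma~\ref{lem:limit} and the $\eta^J>0$ inequalities, and ruling out collapse to $0$ via the extension of Proposition~\ref{prop:cntnu_extd}). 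The feasible-flow proof is elementary and explicitly produces a coherent system with the prescribed $\Psi$-image; your clopen argument leans on invariance of domain but is shorter and essentially reuses the escape analysis that the paper separately carries out for the continuity of $\widehat{\Phi}^{-1}$ in the proof of Theorem~\ref{Main_result_1}. One small slip: when you remark that ``the arc lengths stay bounded'' in the $k_{v_0}^n\to 0$ case, this is inaccurate---$l_{v_0}^f$ generally diverges as $k_{v_0}\to 0$. What actually forces $T_{v_0}^f\to 0$ is the estimate in the proof of Proposition~\ref{prop:cntnu_extd}, namely $\theta_i^n k_i^n\leq 2k_i^n\ln(2/k_i^n)\to 0$; the conclusion $T_{v_0}^n\to 0$ stands regardless.
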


\subsubsection{Coherent system and image set}
    \label{sssec:coh.sys.}
\newcommand{\CO}{\mathcal{CO}}
\begin{definition}\label{def:CO}
    The \DEF{coherent system} is a subset of $\mathbb{R}_{\geq 0}^{3\lvert F \rvert}$ defined as \begin{equation}
        \CO:= \bigg\{\ \left( T_v^f \right)_{v \in f} \ \bigg\lvert\ 
        T_u^f + T_v^f +T_w^f < \pi\ \textrm{ for any } f=(uvw)\in F \ \bigg\} \ . 
    \end{equation}
    And for any $I\subseteq V$, let \begin{equation}
        \CO^I := \bigg\{\ \left( T_v^f \right)_{v \in f} \in \CO \ \bigg\lvert\ 
        T_v^f =0 \textrm{ if and only if } v\in I \ \bigg\} \ .
    \end{equation}
\end{definition}
Geometrically, a point in $\CO$ realizes each $f$ as a possibly degenerated triangle, so that the total geodesic curvature of circle arc centered at $v\in f$ is exactly $T_v^f$ if $T_v^f>0$, and is geodesic if $T_v^f=0$. 
These triangles do not need to match along the edges.

We follow notations in \eqref{def:T.g.c_v^f}, \eqref{def:T.g.c_v} and \eqref{def:ln}. There is a natural embedding 
\begin{equation}\label{map:iota}
\begin{split}
    \iota :\quad \Omega_S^I \quad &\longrightarrow \quad \CO^I \\
    \vec{S}=\ln \vec{k} &\longmapsto \left( T_v^f(\vec{k}) \right) _{v \in f} 
\end{split}
\end{equation}
where $\vec{k}\in \partial^I\Omega_k$, and there exists a natural projection
\begin{equation}\label{map:psi}
\begin{split}
    \Psi :\quad \CO \quad &\longrightarrow \quad \widehat{\Omega}_T \\
    \left( T^f_v \right)_{v \in f} &\longmapsto \left( \sum_{f \in F_{\{v\}} } T_v^f \right) _{v \in V} 
\end{split}
\end{equation}
by summing up at vertices. Note that $\Psi(\CO^I) \subseteq \partial_0^I\Omega_T$ by the Gauss-Bonnet formula. 

        

In order to prove Proposition \ref{prop:image} which stated that $\widehat{\Phi}$ is a surjective homeomorphism from $\partial^I\Omega_k$ to $\Psi(\mathcal{CO}^I)$, we need the following result. 
We say $f:\mathbb{R}^n \to \mathbb{R}$ is \DEF{coercive} if $f(x) \to +\infty$ as $\| x \| \to +\infty$. 
\begin{lemma}[\cite{Gexu}, Lemma 4.6]\label{lem:coercive}
    Suppose $f(x)$ is a $C^2$-smooth, strictly convex function on $\mathbb{R}^n$.
    The following two statements are equivalent:
    \begin{enumerate}[label=(\arabic*)]
      \item $f$ has a unique minimum;
      \item $f$ is coercive.
    \end{enumerate}
\end{lemma}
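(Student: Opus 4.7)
The plan is to prove both directions of this equivalence using standard convex analysis. The direction $(2) \Rightarrow (1)$ is straightforward: coercivity ensures that for $R$ large enough, $f(x) > f(0)$ whenever $\|x\| > R$, so the infimum of $f$ coincides with its infimum on the compact ball $\{\|x\|\leq R\}$, which is attained by continuity. Strict convexity immediately forbids two distinct minimisers, since the midpoint of any two minimisers would have strictly smaller value. Neither the $C^2$ hypothesis nor anything beyond continuity and strict convexity is needed here.

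For the harder direction $(1) \Rightarrow (2)$, I plan a contradiction argument. Assume $f$ has a unique minimum at some point $x_0$, which after translating I may take to be the origin with $f(0) = 0$, but assume $f$ is not coercive. Then there exist $M>0$ and a sequence $\{x_n\}$ with $\|x_n\|\to +\infty$ and $f(x_n)\leq M$. Setting $u_n := x_n/\|x_n\|$ and passing to a subsequence using compactness of the unit sphere $S^{n-1}$, I may assume $u_n \to u$ for some unit vector $u$.

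The key step is to track the values of $f$ along the limiting ray generated by $u$. For any fixed $t>0$ and $n$ large enough that $\|x_n\|>t$, convexity applied along the segment from $0$ to $x_n$ gives
\[
f(tu_n) \;=\; f\Bigl(\tfrac{t}{\|x_n\|} x_n + \bigl(1-\tfrac{t}{\|x_n\|}\bigr)\cdot 0\Bigr) \;\leq\; \tfrac{t}{\|x_n\|} f(x_n) \;\leq\; \tfrac{tM}{\|x_n\|} \;\longrightarrow\; 0 .
\]
By continuity, $f(tu)\leq 0$; combined with $f(tu)\geq f(0) = 0$ from the minimum property, this forces $f$ to vanish on the entire ray $\{tu:t\geq 0\}$. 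This contradicts the uniqueness of the minimum. Equivalently, one can deduce a direct contradiction with strict convexity by noting $f(u) < \tfrac{1}{2}f(0) + \tfrac{1}{2}f(2u) = 0$, which is impossible since $f(u)\geq 0$.

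I expect the main obstacle to be the direction $(1) \Rightarrow (2)$, specifically the passage from a merely bounded sequence $\{x_n\}$ escaping to infinity to a well-defined limit direction along which $f$ stays small. Without the subsequential extraction of a limit direction in $S^{n-1}$, one only knows that $f$ is bounded at isolated faraway points, which is insufficient to invoke strict convexity. The convexity estimate in the displayed inequality is the pivot that converts ``boundedness at infinity along a sequence'' into ``boundedness on a whole ray,'' which in turn collapses onto the unique minimum to produce the contradiction.
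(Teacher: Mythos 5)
Your proof is correct. The paper itself gives no argument for this lemma---it is cited verbatim as Lemma 4.6 of Ge--Xu \cite{Gexu}---so there is no internal proof to compare against. Both directions of your argument are sound: the compactness-of-sublevel-sets plus strict-convexity argument for $(2)\Rightarrow(1)$ is standard, and the ray argument for $(1)\Rightarrow(2)$ is exactly the right idea. The pivotal step, extracting a limiting direction $u\in S^{n-1}$ from a non-coercive escaping sequence and then propagating the bound along the ray $\{tu : t\ge 0\}$ via the convexity inequality, is correctly set up (you properly restrict to $n$ with $\|x_n\|>t$ so that the convex combination coefficient lies in $[0,1]$), and the conclusion $f(tu)=0$ for all $t>0$ immediately kills uniqueness. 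You are also right that the $C^2$ hypothesis is not used anywhere; continuity plus strict convexity suffice, so the lemma as stated is weaker than what you actually prove. This is the same essential mechanism one finds in the convex-analysis literature (sometimes phrased via the recession function or via the fact that a finite convex function with a minimizer and no affine directions of flatness must grow at least linearly along every ray), and it matches the level of generality needed in this paper.
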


\newcommand{\engy}{\widetilde{\mathcal{E}}^I}
\newcommand{\engyf}{\widetilde{\mathcal{E}}^I _f}
\begin{proof}[Proof of Proposition \ref{prop:image}]
    It is easy to see that $\widehat{\Phi} = \Psi \circ \iota \circ \ln : \partial^I\Omega_k \to \partial_0^I\Omega_T$. Hence $\widehat{\Phi}(\partial^I\Omega_k) \subset \Psi(\CO^I)$.

    Now we fix a point $\vec{T}=(T_v)_{v \in V}$ in $\Psi(\CO^I)$, with a preimage $(T_v^f)_{v \in f}$ in $\CO^I$. 
    For a triangle $f=(uvw)\in \overline{F}$, we can define a closed 1-form $\omega_f^I$ on $\mathbb{R}^{\{u,v,w\}\setminus I}$ as (\ref{def:1form_loc}). It naturally induces a closed 1-form on $\Omega_S^I = \mathbb{R}^{V \setminus I}= \mathbb{R}^{\overline{V}}$.
    However, we will modify this 1-form to obtain global results.
    Define another 1-form 
    \begin{equation}
        \widetilde{\omega}_f^I := \omega_f^I - \left( T_u^f \mathrm{d}S_u + T_v^f \mathrm{d}S_v + T_w^f \mathrm{d}S_w \right) 
    \end{equation}
    on $\mathbb{R}^{\{u,v,w\}\setminus I}$, where $T_i^f\mathrm{d}S_i$ is defined to be zero when $i \in I$. Then let 
    \begin{equation}
        \engyf := \int _ {\vec{S}_0^I} ^ {\vec{S}^{I}} \widetilde{\omega}_f^I.
    \end{equation}
     Finally, define the energy function as 
    \begin{equation}\label{engyy}
        \engy := \sum _{f \in \overline{F}} \engyf.
    \end{equation}
    By \eqref{def:1form_loc} and the definitions above, when $\ln \vec{k} = \vec{S} \in \Omega_S^I$ and $v\in \overline{V}$, 
    \begin{equation}\label{eq:deriv_engy}
    \frac{\partial \engy}{\partial S_v} (\vec{S}) 
    = \sum_{f \in \overline{F}} \frac{\partial \engyf}{\partial S_v} (\vec{S}) 
    = \sum_{f \in F_{\{v\}}} \left( T_v^{f}(\vec{k}) - T_v^f \right) 
    = T_{v}(\vec{k}) - \left( \sum_{f \in F_{\{v\}}} T_v^f \right) = T_v(\vec{k}) - T_v \ .
    \end{equation}
    In other words, $\nabla \engy (\ln \vec{k}) = \widehat{\Phi} (\vec{k}) - \vec{T}$ as a vector-valued function on $\partial^I\Omega_k$. 

    \bigskip
    \textbf{Claim:}
    $\engy$ is a strictly convex and coercive function on $\Omega_S^I = \mathbb{R}^{\overline{V}}$. 
    
    Hence $\engy$ contains a unique minimum point $\vec{S}_{0}=\ln(\vec{k}_{0})$ in $\Omega_S^I$ by Lemma \ref{lem:coercive}, and $\frac{\partial \engy}{\partial S_v} (\vec{S}_{0}) = 0 $ for any $v \in \overline{V} $. 
    On the other hand, by \eqref{eq:deriv_engy} 
    we have $\frac{\partial \engy}{\partial S_v} (\vec{S}_0) = T_v(\vec{k}_0) - T_v$.     
    This implies $\widehat{\Phi}(\vec{k}_{0}) = \vec{T} = \Psi( (T_v^f)_{v \in f} ) $. Therefore, we have $\Psi(\CO^I) = \widehat{\Phi}(\partial^I\Omega_k)$.

    Since $\engy$ is strictly convex from $\mathbb{R}^{\overline{V}}$, by Lemma \ref{lem:embedding}, $\nabla\engy$ is a smooth embedding. By (\ref{eq:deriv_engy}), $\nabla\engy$ is simply differed from the restriction of $\widehat{\Phi}$ by a constant translation. Therefore, $\widehat{\Phi}$ also maps the stratum homeomorphically onto its image.
    \end{proof}

    \begin{proof}[Proof of Claim]
    Let $f=(uvw)\in \overline{F}$ as before. By Lemma \ref{lem:convex}, as a $C^2$-smooth function on $\mathbb{R}^{\{u,v,w\}\setminus I}$, $\engyf$ is the sum of a strictly convex function and a linear function, hence strictly convex again. 
    Since $(T_v^f)_{v \in f} \in \CO^I$, there exists $\vec{k}_f \in \overline{\Omega} _k^\Delta$ such that $\widehat{\Phi}_T^{\Delta} (\vec{k}_f) = (T^f_u, T^f_v, T^f_w)$, by Proposition \ref{prop:local_homeo}. 
    Let $\vec{S}_f:= \ln(\vec{k}_f) \in \mathbb{R}^{\{u,v,w\}\setminus I}$, then $\vec{S}_f$ is a zero of the 1-form $\widetilde{\omega}_f^I$. By Lemma \ref{lem:coercive}, $\engyf$ is coercive when regarded as a function on $\mathbb{R}^{\{u,v,w\}\setminus I}$. 
    It also has a unique minimum.
    
    However, as a $C^2$-function on $\Omega_S^I$, $\engyf$ is merely convex, and coercive along certain directions. More precisely, let $H^I_f$ be the $\lvert\overline{V}\rvert \times \lvert\overline{V}\rvert$ Hessian of $\engyf$, and $\vec{S}=(S_i)_{i \in \bar{V}}\in\mathbb{R}^{\overline{V}}$ be a column vector. 
    We have the following four statements: 
    \begin{enumerate}[label=(\arabic*)]
        \item for any non-zero vector $\vec{S}\in\mathbb{R}^{\overline{V}}$, we have ${\vec{S}}^T H^I_f \vec{S} \geq 0$; 
        \item if $S_i\neq0$ for some $i\in\{u,v,w\}\setminus I$, then ${\vec{S}}^T H^I_f \vec{S} >0$; 
        \item $\engyf \geq m_f$ for some constant $m_f$;
        \item if $\lvert S_i \rvert \to +\infty$ for some $i\in\{u,v,w\} \setminus I$, then $\engyf(S)\to +\infty$.
    \end{enumerate}

    Now let $H^I$ be the $\lvert\overline{V}\rvert \times \lvert\overline{V}\rvert$ Hessian of $\engy$. For any non-zero vector $\vec{S}\in\mathbb{R}^{\overline{V}}$, choose $j \in \overline{V}$ with $S_j\neq0$. There exists at least one $f_0 \in F_{\{j\}} \subset \overline{F}$. By statements (i) and (ii) listed above, 
    $$\vec{S}^T H^I \vec{S} = \sum_{f\in \overline{F}} \vec{S}^T H^I_f \vec{S} \geq \vec{S}^T H^I_{f_0} \vec{S} > 0 \ . $$
    This implies $H^I$ is again positively definite and $\engy$ is strictly convex. 

    To prove that  $\engy$ is coercive on $\mathbb{R}^{\overline{V}}$, it sufficies to show that, for any $A > 0$, there exists $B > 0$ such that $\tilde{\mathcal{E}}^I (\vec{S}) > A$ holds for every $\vec{S} \in \mathbb{R}^{\bar{V}}$ with $\| \vec{S} \|_{\infty} > B$. 
    By statement (iv) listed above and the finiteness of $|F|$, for any large $A > 0$,  there is some constant $B>0$ such that 
    \begin{center} for all $f=(uvw)\in\overline{F}$, whenever $\lvert S_i \rvert > B$ for some $i\in\{u,v,w\} \setminus I$, we have \end{center} 
    $$\engyf(\vec{S})> A - \sum_{g \neq f \in \overline{F}} m_g \ . $$
    Therefore, for $\vec{S} \in\mathbb{R}^{\overline{V}}$ with $\| \vec{S} \|_{\infty} >B$, there exists a vertex $j\in\overline{V}$ such that $\lvert S_j \rvert > B$ and $f_0\in F_{\{j\}} \subset \overline{F}$. By the choice of $B$ and the lower bounds for $\engyf$'s, we have
    $$ \engy = \widetilde{\mathcal{E}}^I _{f_0} + \sum _{f_0 \neq f \in \overline{F}} \engyf > \left( A - \sum_{f_0 \neq g \in \overline{F}} m_g \right) + \sum _{f_0 \neq f \in \overline{F}} m_f = A \ . $$
\end{proof}

\subsubsection{Feasible flow and surjectivity}
    \label{sssec:feas.flow}
\newcommand{\Btimes}{\mbox{\scalebox{1.3}{$\boxtimes$}}}
To obtain Proposition \ref{prop:co} which implies that $\Psi:\CO^I \to \partial_0^I\Omega_T$ is actually surjective, we have to solve linear equations with inequality constrains. 
This turns out to be a problem of constructing a balanced flow on an oriented graph, which is a linear programming. The argument here is similar to \cite[\S 4.3]{bo}.

For a fixed proper set $I\subset V$, let $A(I)$ be the set of inner corners at a vertex outside $I$, or equivalently, 
\[ A(I) := \bigg\{\ (v,f)\in (V \setminus I) \times F \ \bigg\lvert\ v \in f \ \bigg\} \subset \overline{V}\times\overline{F} \ . \]
Let $\vec{M}=(M_v)_{v\in V}$ be a fixed point in $\partial_0^I\Omega_T$. 
An oriented graph $(N,X,\mathcal{I})$ associated to the triangulation $G$ and $I$ is defined as following. 
\medskip
    
    \textbullet\ The vertices set $N:= \overline{V} \sqcup \overline{F} \sqcup \{ \Btimes \}$, where $\Btimes$ is an extra virtual vertex.
    
    \textbullet\ There are three types of oriented edges in $X$.
    
    \quad (1). For any $v\in \overline{V}$, there is an oriented edge $x_v$ from $\Btimes$ to $v$. 
    
    \quad (2). For any $(v,f)\in A(I)$, there is an oriented edge $x_{v,f}$ from $v$ to $f$. 
    
    \quad (3). For any $f\in \overline{F}$,  there is an oriented edge $x_{f}$ from $f$ to $\Btimes$. 

\medskip
Next, assign each oriented edge $x\in X$ a closed interval $\mathcal{I}_x=[a_x, b_x]$ of $[- \infty, + \infty]$
\medskip
    
    \textbullet\ For any $v\in \overline{V}$, $\mathcal{I}_{x_v} := [M_v, M_v]$. 
    
    \textbullet\ For any $(v,f)\in A(I)$, $ \mathcal{I}_{x_{v,f}} := [\delta , +\infty)$. 
    
    \textbullet\ For any $f\in \overline{F}$,  $ \mathcal{I}_{x_f} := (-\infty, \pi - \varepsilon]$. 

Here $\delta, \varepsilon$ are two sufficiently small positive constants (depending on $\vec{M} \in\partial_0^I\Omega_T$) to be determined. This graph faithfully represents the linear equations and constrains.

\begin{definition}
    A \DEF{feasible flow} of $(N,X,\mathcal{I})$ is a function $\phi:X\to\mathbb{R}$ 
    such that 
    \begin{enumerate}
        \item it is balanced at each vertex $n \in N$, namely 
\[ \sum_{x\in in(n)} \phi(x) = \sum_{y\in ex(n)} \phi(y),\quad \forall n\in N \ ; \] 
        \item it is constrained by the intervals $\mathcal{I}_x$ on each edge $x\in X$, namely 
\[ a_x \leq \phi(x) \leq b_x,\quad \forall x \in X \ . \]
    \end{enumerate}
    For $n\in N$, $in(n)$ is the set of oriented edges ending at $n$, and $ex(n)$ is the set of oriented edges starting from $n$. 
\end{definition}
See Figure \ref{fig:feasible} as an example of the graph $(N,X,\mathcal{I})$. It is easy to check that a feasible flow is equivalent to a solution of $\Psi(\vec{T}) = \vec{M} \in \partial_0^I\Omega_T$, with $\vec{T} \in\CO^I$.

\begin{lemma}\label{lem:equiv_ff}
    For a given proper subset $I\subset V$ and $\vec{M}=(M_v)_{v\in V} \in \partial_0^I\Omega_T $, there exists some $\vec{T} = (T_v^f)_{v\in f} \in \CO^I$ with $\Psi(\vec{T})=\vec{M}$ if and only if there exists a feasible flow $\phi$ of $(N,X,\mathcal{I})$ associated to $G$ and $I$ for suitable $\varepsilon,\delta>0$ defined as above.  
\end{lemma}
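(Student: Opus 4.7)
The plan is to establish an explicit bijection between solutions $\vec{T}=(T_v^f)_{v\in f}\in \mathcal{CO}^I$ satisfying $\Psi(\vec{T})=\vec{M}$ and feasible flows $\phi$ on the oriented graph $(N,X,\mathcal{I})$, and then to verify that the balance conditions correspond to the defining equations while the interval constraints correspond to the defining inequalities of $\mathcal{CO}^I$.

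The forward direction goes as follows. Given $\vec{T}\in\mathcal{CO}^I$ with $\Psi(\vec{T})=\vec{M}$, I define $\phi(x_v):=M_v$ for $v\in\overline{V}$, $\phi(x_{v,f}):=T_v^f$ for $(v,f)\in A(I)$, and $\phi(x_f):=\sum_{v\in f}T_v^f$ for $f\in\overline{F}$. Note that $T_v^f=0$ for $v\in I$ by the definition of $\mathcal{CO}^I$, so $\phi(x_f)$ is effectively a sum only over $v\in f\cap\overline{V}$, matching the balance condition at the vertex $f\in N$ of the graph. The balance at each $v\in\overline{V}$ becomes the identity $M_v=\sum_{f\ni v}T_v^f$, which is exactly the $v$-th coordinate of $\Psi(\vec{T})=\vec{M}$. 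Balance at the virtual vertex $\Btimes$ reads $\sum_{f\in\overline{F}}\phi(x_f)=\sum_{v\in\overline{V}}M_v$, which is automatic by double counting over $A(I)$. For the interval constraints, since $\vec{T}\in\mathcal{CO}^I$ is a finite collection with $T_v^f>0$ strictly for $(v,f)\in A(I)$ and $T_u^f+T_v^f+T_w^f<\pi$ strictly for all $f\in F$, we may choose
\[
\delta:=\tfrac{1}{2}\min_{(v,f)\in A(I)}T_v^f>0,\qquad \varepsilon:=\tfrac{1}{2}\min_{f\in \overline{F}}\bigl(\pi-\textstyle\sum_{v\in f}T_v^f\bigr)>0,
\]
so that $\phi(x_{v,f})\geq\delta$ and $\phi(x_f)\leq\pi-\varepsilon$ hold uniformly.

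For the reverse direction, given a feasible flow $\phi$ for some choice of $\delta,\varepsilon>0$, I define $T_v^f:=\phi(x_{v,f})$ for $(v,f)\in A(I)$ and $T_v^f:=0$ for $v\in I$. Then $T_v^f>0\iff v\notin I$ follows from $\phi(x_{v,f})\geq\delta>0$. For any $f\in\overline{F}$, the balance at the vertex $f$ gives $T_u^f+T_v^f+T_w^f=\phi(x_f)\leq\pi-\varepsilon<\pi$; for $f\notin \overline{F}$ all vertices of $f$ lie in $I$, so the sum is $0<\pi$. Hence $\vec{T}\in\mathcal{CO}^I$. Finally, $\Psi(\vec{T})=\vec{M}$ follows coordinate-by-coordinate: for $v\in\overline{V}$ the balance at $v$ yields $\sum_{f\ni v}T_v^f=M_v$, and for $v\in I$ we have $\sum_{f\ni v}T_v^f=0=M_v$ (the latter equality holds since $\vec{M}\in\partial_0^I\Omega_T$).

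I do not anticipate a serious obstacle here, as the lemma is essentially a translation between two equivalent combinatorial presentations of the same linear system with strict inequalities. The only point requiring minor care is the asymmetric treatment of triangles in $\overline{F}$ whose vertices are partially in $I$; this is absorbed cleanly because the convention $T_v^f=0$ for $v\in I$ makes the partial sum over $v\in f\cap\overline{V}$ agree with the full sum $T_u^f+T_v^f+T_w^f$ appearing in the triangle inequality defining $\mathcal{CO}^I$.
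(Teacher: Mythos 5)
Your proof is correct and carries out exactly the verification that the paper leaves to the reader (the paper just asserts ``it is easy to check that a feasible flow is equivalent to a solution''). The explicit bijection between flows and coherent systems, the use of the convention $T_v^f=0$ for $v\in I$ to reconcile the partial sums at the triangle vertices of the graph with the full triangle inequality defining $\mathcal{CO}^I$, and the choice $\delta=\tfrac12\min T_v^f$, $\varepsilon=\tfrac12\min(\pi-\sum_{v\in f}T_v^f)$ are all sound and match the intended argument.
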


\begin{figure}
    \centering
    \includegraphics[width=0.9\linewidth]{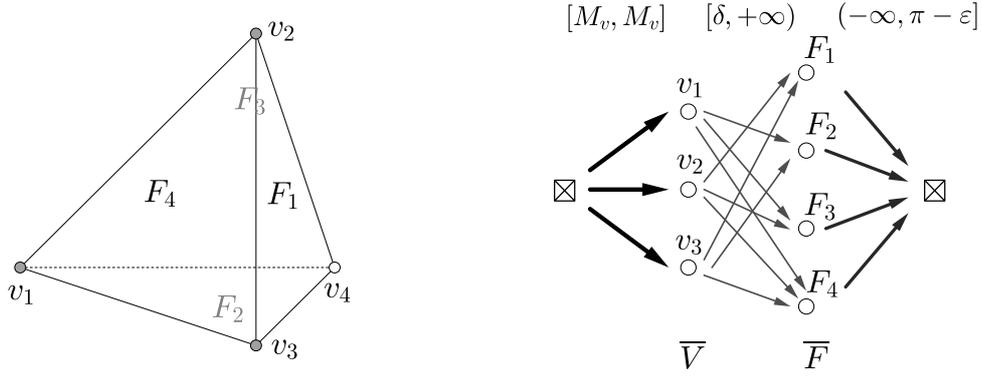}
    \caption{\small Left: $G$ is the tetrahedron triangulation of the sphere. $F_i$ is the face opposite to vertex $v_i$. Right: the oriented graph $(N,X,\mathcal{I})$ associated with $G$ and $I:=\{v_4\}$. The top row shows the closed intervals assigned to the edges. The two $\Btimes$'s are identified as the same vertex.}
    \label{fig:feasible}
\end{figure}

The following Feasible Flow Theorem gives a condition under which feasible flows exist. See \cite[\S 4.3]{bo}, \cite[Ch. II, \S 3]{FF}
    \begin{theorem}[Feasible Flow Theorem]\label{thm:FFT}
    There is a feasible flow of $(N,X,\mathcal{I})$ if and only if 
    \begin{equation}\label{eq:FFT}
        \sum_{x \in ex(N')} b_x \geq \sum_{y \in in(N')} a_y
    \end{equation}
    for any non-empty proper subset $N'\subset N$, where $ex(N')$ (resp. $in(N')$) is the set of oriented edges that start from (resp. end at) a vertex in $N'$ but end at (resp. start from) a vertex outside $N'$. 
\end{theorem}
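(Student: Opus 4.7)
The necessity is direct. Fix any feasible flow $\phi$ and any non-empty proper subset $N'\subset N$. Summing the balance condition $\sum_{x\in in(n)}\phi(x) = \sum_{y\in ex(n)}\phi(y)$ over all $n\in N'$, the contributions from edges with both endpoints in $N'$ cancel on the two sides, yielding $\sum_{y\in in(N')}\phi(y) = \sum_{x\in ex(N')}\phi(x)$. Combining with the pointwise bounds $\phi(y)\geq a_y$ on the left and $\phi(x)\leq b_x$ on the right gives $\sum_{y\in in(N')}a_y \leq \sum_{x\in ex(N')}b_x$, which is \eqref{eq:FFT}.

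For sufficiency, the plan is the classical reduction from flows with two-sided capacity constraints to a standard $s$-$t$ max-flow problem. First, augment $(N,X,\mathcal{I})$ into an auxiliary network: adjoin a super-source $s$ and a super-sink $t$; for every original arc $x$ from $n_1$ to $n_2$ with finite lower bound $a_x$, insert auxiliary arcs $s\to n_2$ and $n_1\to t$, each of capacity $a_x$, and replace $x$ by an arc of capacity $b_x - a_x$ with zero lower bound. A flow $\psi$ in the auxiliary network that saturates every arc out of $s$ yields a feasible flow of the original by setting $\phi(x) = a_x + \psi(x)$, and conversely every feasible flow of the original produces such a saturating $\psi$.

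By the standard max-flow min-cut theorem applied to the auxiliary network, a saturating $\psi$ exists if and only if every $s$-$t$ cut has capacity at least $\sum_x a_x$. An $s$-$t$ cut corresponds to a partition $\bigl(\{s\}\cup N',\; \{t\}\cup (N\setminus N')\bigr)$ of the auxiliary vertex set, for some $N'\subset N$. A bookkeeping computation of the cut capacity, distinguishing the contributions of the newly inserted $s$- and $t$-arcs from those of the reduced-capacity versions of the original arcs, shows that after cancellation the cut-capacity condition ``cap$(\cdot)\geq \sum_x a_x$'' reduces exactly to \eqref{eq:FFT} on the subset $N'$. Together with necessity this completes the proof.

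The main obstacle is the presence of edges carrying infinite bounds in our network: $b_{x_{v,f}} = +\infty$ for the corner arcs and $a_{x_f} = -\infty$ for the face arcs. For arcs with $b_x=+\infty$ the reduced arc in the auxiliary network must be given infinite capacity, so that any cut severing such an arc has infinite capacity and the inequality \eqref{eq:FFT} is automatic on the subsets $N'$ where this happens. For arcs with $a_x=-\infty$ the lower bound imposes no constraint, so the auxiliary $s$- and $t$-arcs can be omitted and the reduced arc simply inherits capacity $b_x$. These modifications are routine; they are subsumed in the form of the Feasible Flow Theorem proved in Ford--Fulkerson (Chapter II, \S 3), which one can quote to bypass the bookkeeping entirely.
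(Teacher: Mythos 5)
The paper does not prove Theorem~\ref{thm:FFT}; it cites it as a classical result with references to Bobenko--Springborn and Ford--Fulkerson. Your necessity argument (summing the balance conditions over $N'$, cancelling internal edges, and bounding by $a_x$ and $b_x$) is complete and correct, and your sufficiency sketch via the standard reduction to $s$-$t$ max-flow is the classical proof, so your approach is consistent with what the paper is implicitly relying on.

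One small point in the sufficiency sketch is worth flagging. For an arc $x$ with $a_x = -\infty$ you write that the reduced arc ``simply inherits capacity $b_x$,'' but a max-flow arc carries an implicit lower bound of $0$, which is \emph{strictly more} restrictive than the original lower bound $-\infty$: the original constraint permits $\phi(x)$ to be arbitrarily negative. The faithful reduction replaces $x\colon n_1\to n_2$, $\phi(x)\in(-\infty,b_x]$, by a pair $x^+\colon n_1\to n_2$ with capacity $b_x$ and $x^-\colon n_2\to n_1$ with capacity $+\infty$, so the net flow $\phi(x^+)-\phi(x^-)$ can range over $(-\infty,b_x]$; the cut computation must then account for the reverse arc $x^-$ as well. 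Since you also fall back on citing Ford--Fulkerson (Chapter~II, \S 3) -- the same reference the paper uses -- this does not undermine the argument, but the sentence as written would be wrong if taken as a literal construction rather than a pointer to the literature.
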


\begin{proposition}\label{prop:always_ff}
    For any given proper subset $I\subset V$ and $\vec{M}=(M_v)_{v\in V} \in \partial_0^I\Omega_T $, there are suitable $\delta, \varepsilon >0$ such that the graph $(N,X,\mathcal{I})$ associated with $G$ and $I$ has a feasible flow. 
\end{proposition}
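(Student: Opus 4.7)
The plan is to invoke the Feasible Flow Theorem (Theorem \ref{thm:FFT}) and verify \eqref{eq:FFT} for every non-empty proper $N' \subsetneq N$ by taking $\delta, \varepsilon > 0$ small, using the strict positivity of $\eta^J(\vec M)$ guaranteed by $\vec M \in \partial_0^I \Omega_T$. Writing $N' = V' \sqcup F' \sqcup (\{\Btimes\} \text{ or } \emptyset)$ with $V' \subset \overline{V}$ and $F' \subset \overline{F}$, the analysis splits on whether $\Btimes \in N'$; moreover, the presence of any edge $x_{v,f} \in ex(N')$ (with $b_x = +\infty$) or $x_f \in in(N')$ (with $a_y = -\infty$) trivialises \eqref{eq:FFT}, so only two genuinely constrained sub-cases will remain.

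\emph{Case $\Btimes \notin N'$.} Any $v \in V'$ having a face $f \in F_{\{v\}} \setminus F'$ would give an $x_{v,f}$-edge in $ex(N')$ and trivialise the condition, so I may assume $F_{V'} \subset F'$. Then $ex(N') = \{x_f : f \in F'\}$ contributes $(\pi - \varepsilon)|F'|$, and $in(N')$ contributes at most $\sum_{v\in V'} M_v + 3\delta|F'|$ after bounding the corner count by $3|F'|$. If $V' = \emptyset$ the requirement reduces to $\pi - \varepsilon \geq 3\delta$. Otherwise, I substitute $\sum_{v\in V'} M_v = \pi|F_{V'}| - \eta^{V'}(\vec M)$ and use $|F'| \geq |F_{V'}|$; the desired inequality becomes $(\varepsilon + 3\delta)|F'| \leq \pi(|F'|-|F_{V'}|) + \eta^{V'}(\vec M)$, which holds once $(\varepsilon + 3\delta)|F| \leq c^* := \min_{\emptyset \neq J \subset V} \eta^J(\vec M)$, a quantity strictly positive by the definition of $\partial_0^I \Omega_T$.

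\emph{Case $\Btimes \in N'$.} Any $f \in \overline{F} \setminus F'$ contributes $x_f \in in(N')$ with $a_y = -\infty$, so it suffices to handle $F' = \overline{F}$, and then properness of $N'$ forces $V' \subsetneq \overline{V}$. Since every $v \in V'$ has $F_{\{v\}} \subset \overline{F} = F'$, no $x_{v,f}$-edge lies in $ex(N')$, and the required inequality reduces to $\sum_{v \in \overline{V} \setminus V'} M_v \geq \delta \sum_{v \in \overline{V} \setminus V'} |F_{\{v\}}|$. Because $M_v > 0$ for every $v \in \overline{V}$ by definition of the stratum $\partial_0^I \Omega_T$, this holds whenever $\delta \leq \min_{v \in \overline{V}} M_v/|F_{\{v\}}|$.

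Combining the bounds, any $\delta, \varepsilon > 0$ with $\varepsilon + 3\delta \leq \min(\pi, c^*/|F|)$ and $\delta \leq \min_{v \in \overline{V}} M_v/|F_{\{v\}}|$ will yield a feasible flow. The main obstacle is the bookkeeping: cataloguing $ex(N')$ and $in(N')$ across all cuts $N'$ and carefully identifying when the $\pm\infty$ bounds trivialise \eqref{eq:FFT}, so as to isolate the two genuinely constrained sub-cases. Once that is done, the strict positivity built into the definition of $\partial_0^I \Omega_T$ (both for $\eta^J(\vec M)$ on subsets $J \subset V$ and for the individual $M_v$ with $v \in \overline V$) supplies exactly the slack needed to absorb the small corrections $\varepsilon$ and $\delta$.
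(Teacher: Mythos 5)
Your proposal is correct and takes essentially the same approach as the paper: invoke the Feasible Flow Theorem and verify the cut inequality \eqref{eq:FFT} by a case analysis on whether $\Btimes$ belongs to $N'$, quickly disposing of cuts with a $+\infty$ on the supply side or $-\infty$ on the demand side, and reducing the genuinely constrained cases to the positivity of $\eta^{V'}(\vec M)$ and of the individual $M_v$.

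One small divergence worth noting: in the case $\Btimes\notin N'$, $V'\neq\varnothing$, the paper first reduces to $F'=F_{V'}$ (using the fact that any extra face with all corners outside $V'$ contributes $\pi-\varepsilon$ to the supply side but at most $3\delta$ to the demand side, so is ``locally safe''), and then exploits the constraint $F'=F_{V'}$ to bound the number of $\delta$-edges per face by $2$, giving a $2\delta$ coefficient. You instead keep a general $F'\supseteq F_{V'}$, use the cruder $3\delta$ bound on corners, and absorb the extra faces via the term $\pi(|F'|-|F_{V'}|)\geq 0$ after substituting $\sum_{v\in V'}M_v=\pi|F_{V'}|-\eta^{V'}(\vec M)$. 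This yields a marginally weaker constraint on $\delta,\varepsilon$ but is arguably more streamlined, since it avoids the two-step reduction and the slightly informal ``locally satisfied'' language. Either way, the strict positivity of $c^*:=\min_J\eta^J(\vec M)$, guaranteed by $\vec M\in\partial_0^I\Omega_T\subset\widehat\Omega_T$, provides the slack; the result and its structure are the same.
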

\begin{proof}
    It sufficies to check the Feasible Flow Theorem for every $N'\subset N$.

\vskip 0.3cm

    \noindent \uline{Case (A).} $\Btimes\in N'$.
    
    \indent \uline{Case (A.1).} If there exists $f\in \overline{F}\setminus N'$, then $x_f \in in (N')$ and $a_{x_f} = -\infty$. \eqref{eq:FFT} holds. 
    
    \indent \uline{Case (A.2).} If $\overline{F} \subset N'$, then the properness of $N'$ implies that $\overline{V} \setminus N'$ is not empty. 
    Then $ex(N')$ consists of $x_v$ with $v\in \overline{V}\setminus N'$, and $in(N')$ consists of $x_{v,f}$ with $v\in \overline{V}\setminus N'$ and $(v,f)\in A(I)$. Hence \eqref{eq:FFT} requires that
    \[  
    \sum_{v\in \overline{V}\setminus N'} M_v \geq \sum_{(v,f)\in A(I), v\in \overline{V}\setminus N'} \delta \ . 
    \]
    This will be satisfied if we let 
    \begin{equation}\label{eq:FFT_CA2}
        \delta < \frac{ \min_{v\in \overline{V} } \{M_v\} }{ \max_{v\in \overline{V}} \lvert F_{v} \rvert } \ .
    \end{equation}
    
\vskip 0.3cm

    \noindent \uline{Case (B).} $\Btimes\notin N'$. 
    Let $V':=\overline{V} \cap N'$ and $F':=\overline{F} \cap N'$.

    \indent \uline{Case (B.1).} If $V'=\varnothing$, then $N'=F'\neq \varnothing$. 
    We have $ex(N')=\{x_f \lvert f\in F'\}$, $in(N')=\{x_{v,f} \lvert f\in F', (v,f)\in A(I) \}$.
    Now (\ref{eq:FFT}) requires $(\pi-\varepsilon) \lvert F' \rvert \geq \delta \lvert in(N') \rvert$.
    Note that $\lvert in(N') \rvert \leq 3 \lvert F' \rvert$. Hence we only need to require $\pi>\varepsilon + 3\delta$, or 
    \begin{equation}\label{eq:FFT_CB1}
        0 < \delta, \varepsilon < \pi /4 \ .
    \end{equation} 

    \indent \uline{Case (B.2).} If there exists some $v\in V'$. If there further exists some $f \in F_{v}\setminus N'$, then $x_{v,f}\in ex(N')$ and $b_{x_{v,f}} = +\infty$, satisfying (\ref{eq:FFT}). So we may assume $F_{V'}\subset N' $. 
    On the other hand, if there is some $f\in F'$ with $v\notin V'$ for all $(v,f)\in A(I)$, then by the choice of $\delta, \varepsilon$ in Case (B.1), \eqref{eq:FFT} is locally satisfied at $f$.
    Hence we only need to consider the case where $F' = F_{V'}$.
    Now that $ex(N')=\{x_f\lvert f\in F_{V'}\}$. $in(N')$ contains two types of edges: $x_{v'}$ with $v'\in V'$, and $x_{v,f}$ with $v\in \overline{V}\setminus N', f\in F_{V'}, (v,f)\in A(I)$. 
    Note that $f\in F_{V'}$ contains at least one vertex in $V'$. 
    Therefore $\#\{x_{v,f}\lvert v\in \overline{V}\setminus N', f\in F_{V'} \} \leq 2 \lvert F_{V'} \rvert$.
    Now that 
    \begin{align*}
        \sum_{x \in ex(N')} b_x &= (\pi-\varepsilon)\lvert F_{V'} \rvert \ , \\ 
        \sum_{y \in in(N')} a_y &= \sum_{v'\in V'} M_{v'} + \delta\cdot \#\left\{x_{v,f}\left\lvert v\in \overline{V}\setminus N', f\in F_{V'} \right. \right\} 
        \leq \sum_{v'\in V'} M_{v'} + 2\delta \lvert F_{V'} \rvert \ .
    \end{align*}
    Hence we only need to require that $(\pi-\varepsilon)\lvert F_{V'} \rvert > \sum_{v'\in V'} M_{v'} + 2\delta \lvert F_{V'} \rvert $ for any non-empty $V'\subset \overline{V}$, which is equivalent to, by (\ref{def:eta}), 
    \[ \eta^{V'}(M) > (\varepsilon + 2\delta) \lvert F_{V'} \rvert \ . \]
    Since $M\in \partial_0^I\Omega_T \subset \widehat\Omega_T$, $\eta^{V'}(M) >0$ for all non-empty $V'\subset V$ by (\ref{def:widehat_T}). Hence 
    \begin{equation}\label{eq:FFT_CB2}
        \min_{V' \subsetneq \overline{V} } \left\{ \frac{\eta^{V'}(M)}{\left\lvert F_{V'} \right\rvert} \right\}
    \end{equation}
    is positive, and the choice for $\varepsilon+2\delta$ always exists.  
    
    Combining \eqref{eq:FFT_CA2}, \eqref{eq:FFT_CB1}, \eqref{eq:FFT_CB2}, the choice of positive real numbers $\delta, \varepsilon$ always exists.
\end{proof}

\begin{proof}[Proof of Proposition \ref{prop:co}]
    The existence of $\Psi$-preimages of any vector of $\partial_0^I \Omega_T$ follows from Lemma \ref{lem:equiv_ff} and Proposition \ref{prop:always_ff}. 
\end{proof}

\subsection{Global homeomorphism}
We give the proof of Theorem \ref{Main_result_1} as follows.
\begin{proof}[Proof of Theorem \ref{Main_result_1}]
    According to Proposition \ref{prop:strata}, we have shown that 
$$ \widehat{\Phi} : \partial^I \mathbb{R}_{>0}^V \to \partial_0^I \Omega_T $$
is a homeomorphism for every $I\subset V$. 
Then $\widehat{\Phi}$ is bijective. Since  $\widehat{\Phi}$ is continuous, to show that it is an actual homeomorphism, we need to show that the inverse map is also continuous. Denote by $\{\vec{T}^n\}_{n=1}^{+\infty}$ a sequence in $\widehat{\Omega}_T$ converging to some $\vec{T}^* \in \partial_0^I \Omega_T$. Set $\vec{k}^n$ and $\vec{k}^*$ the preimage of $\vec{T}^n$ and $\vec{T}^*$ under $\widehat{\Phi}$, respectively. Similar to the proof of Proposition \ref{prop:local_homeo}, we only need to show that  $\{\vec{k}^n\}_{n=1}^\infty$ is bounded in $\overline{\Omega}_k$. 
If not, by passing to a subsequence, 
we can assume that there exists a vertex subset $J\subset V$ such that $\lim k_j^n = +\infty$ for all $j\in J$, and $k_i^n$ is bounded from above for all $i \notin J$. 
The set $F_J$ can be decomposed as $F_{J} = F_{J_{1}} \sqcup F_{J_{2}} \sqcup F_{J_{3}}$, where $F_{J_{m}}$ is the set of faces having exactly $m$ vertex in $J$ ($m=1,2,3$). 
By Lemma \ref{lem:limit}, for $f\in F_{J_m}$, as $n\to \infty$, 
$$ \sum_{v\in f, v\in J} T_v^f(\vec{k}^n) \to \pi $$
and the sum contains exactly $m$ terms. So as $n\to\infty$, 
\[ \sum_{j\in J} T_j(\vec{k}^n) = \sum_{f\in F_{J}} \sum_{v\in f, v\in J} T_v^f(\vec{k}^n) \to \pi\lvert F_J \rvert \ . \]
This contradicts the assumption $\lim \vec{T}^n = \vec{T}^* \in \partial_0^I \Omega_T $ that requires $\eta^J(\vec{T}^*) = \pi\lvert F_J \rvert -\sum_{j\in J} T_j(\vec{k}^*) >0 $ in \eqref{Omega_T}. 
Hence $\widehat{\Phi}^{-1}$ is continuous and Theorem \ref{Main_result_1} is proved.
\end{proof}

\section{Combinatorial Ricci flow with prescribed total geodesic curvatures}\label{5}

In this section, we prove Theorem \ref{Main_result_3}, which states the convergence of the combinatorial Ricci flow \eqref{CRF1} with prescribed total geodesic curvatures $\vec{T} \in \partial_0^I \Omega_T$.

Let $J \subset I \subset V$ as before. 
Recall that for any $\vec{k} \in \partial^J \Omega_k$, $\vec{S} := \ln \vec{k} \in \Omega_S^I = \mathbb{R}^{V \setminus J}$ is defined as $(\ln k_i)_{i \notin J}$. 
By this change of variables, the flow \eqref{CRF1} transforms to
\begin{align}\label{CF2}
    \frac{d}{dt} S_v(t) = -\left( T_v(\vec{S}(t)) - T_v \right), \quad \text{for any } v \in V \setminus J,
\end{align}
which is the gradient flow of the potential function $\engy$ (\ref{engyy}) on $\Omega^I_S$. 
Here, $\vec{k}(t)$ is regarded as a vector-valued function of $\vec{S}$.

\begin{lemma}\label{solu}
    For any $\vec{k}(0) \in \partial^J \Omega_k$, the solution of the flow \eqref{CF2} with the initial value $\vec{S}(0) := \ln \vec{k}(0)$ exists for all time $t \in [0, \infty)$ and is unique.
\end{lemma}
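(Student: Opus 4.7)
The plan is to apply standard ODE theory directly to the system \eqref{CF2} on the full space $\mathbb{R}^{V \setminus J}$. The first step is to verify that the right-hand side is $C^1$ as a vector field on $\mathbb{R}^{V\setminus J}$. For $\vec{k}\in\partial^J\Omega_k$, the coordinates $k_u$ with $u\in J$ are frozen at zero while $k_u=e^{S_u}$ varies over $(0,+\infty)$ for $u\notin J$. For any face $f=(uvw)\in F_{\{v\}}$ with $v\notin J$, the explicit formulas in Theorem \ref{thm:T123} show that $T_v^f$ is a smooth function of whichever of $k_u, k_v, k_w$ are strictly positive, once the remaining arguments are set to zero; no branch singularity appears at an interior point of this stratum. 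Composing with the diffeomorphism $S_u \mapsto e^{S_u}$ gives $T_v\in C^1(\mathbb{R}^{V\setminus J})$, and hence the right-hand side of \eqref{CF2} is $C^1$.

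The Picard--Lindel\"of theorem then produces a unique local solution $\vec{S}(t)$ on some maximal interval $[0, t_{\max})$ with $t_{\max}\in(0,+\infty]$, and the usual patching argument promotes local to global uniqueness on the common interval of existence. To see $t_{\max}=+\infty$, I would invoke uniform boundedness of the right-hand side. By the Gauss--Bonnet formula applied to the dual triangle of each face (cf.\ the proof of Proposition \ref{prop:cntnu_extd}), one has $T_u^f+T_v^f+T_w^f=\pi-\area(\Delta_f^*)<\pi$ for every $f=(uvw)\in F$. Summing over $f\in F_{\{v\}}$ gives $0\le T_v(\vec{S})<\pi\lvert F_{\{v\}}\rvert$, so the $v$-component of the right-hand side of \eqref{CF2} satisfies
\[ \bigl\lvert T_v(\vec{S}) - T_v \bigr\rvert \le \pi\lvert F_{\{v\}}\rvert + T_v \le 2\pi\lvert F_{\{v\}}\rvert \]
uniformly in $\vec{S}\in\mathbb{R}^{V\setminus J}$. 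A standard continuation argument then forbids finite-time blow-up, so $t_{\max}=+\infty$.

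The main subtlety lies in the regularity claim, since Theorem \ref{thm:T123} as stated asserts $C^1$-ness of $\Phi^\Delta$ only on $\mathbb{R}_{>0}^3$, not on its closure --- and indeed the derivative of $T_v^f$ with respect to $k_w$ genuinely diverges as $k_w\to 0^+$. What rescues the argument is that the zero curvatures associated with $J$ are held fixed along the flow, so the only partial derivatives that actually appear in \eqref{CF2} are those with respect to variables $k_u$ for $u\notin J$, and these remain strictly positive throughout any trajectory in $\partial^J\Omega_k$. Once this point is absorbed, the rest is a routine Picard--Lindel\"of plus continuation argument.
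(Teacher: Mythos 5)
Your proof is correct and takes essentially the same route as the paper: local existence and uniqueness via Picard--Lindel\"of (the paper calls it Cauchy--Lipschitz), then a uniform bound on the right-hand side (the paper appeals to boundedness of $\widehat{\Omega}_T$, you derive the same bound from Gauss--Bonnet) and a continuation argument to rule out finite-time blow-up. Your extra care in justifying $C^1$-regularity of $T_v$ on the stratum $\partial^J\Omega_k$---observing that only partial derivatives with respect to the non-frozen coordinates $u\notin J$ enter, and that these remain smooth since the corresponding $k_u$ stay strictly positive---fills in a point the paper asserts without explicit comment.
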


\begin{proof}
Since $T_v$ is a $C^1$ function of $\vec{S}$, by Cauchy-Lipschitz Theorem, the flow (\ref{CF2}) exists a unique solution $\vec{S}(t)$ for some $t\in[0,\varepsilon)$. 
Since $\widehat{\Omega}_T$ is a bounded region in $\mathbb{R}^{V}$, 
\[ \left\vert\frac{dS_{v}}{dt}\right\vert
=\left\vert T_v(\vec{k}(t))-T_v\right\vert \]
is always bounded. 
Hence, the flow \eqref{CF2} exists for all $t\in [0,\infty)$. 
The uniqueness of the flow can be easily deduced from the classical theory of ordinary differential equations.
\end{proof}

In the spirit of the work of Takatsu \cite{Takatsu}, we introduce the important lemma for estimating the gradient of a convex function along its gradient flow, see e.g. \cite[Theorem 4.3.2]{Gigli} . 

\begin{lemma}\label{gradient_flow}
    Let $h:\mathbb{R}^n\rightarrow\mathbb{R}$ be a $C^2-$smooth convex function. Suppose that $\vec{\xi}:[0,\infty)\rightarrow\mathbb{R}^n$ is a gradient flow of $h.$ Then for any $\tau>0$ and $\vec{\xi}^*\in\mathbb{R}^n$, we have
    \begin{align*}
        \left\lvert \nabla h(\vec{\xi}(\tau)) \right\rvert^2 \le \left\lvert \nabla h(\vec{\xi}^*) \right\rvert^2+\frac{1}{\tau^2} \left\lvert \vec{\xi}^*-\vec{\xi}(0)\right\rvert^2.
    \end{align*}
\end{lemma}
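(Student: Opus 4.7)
The plan is to prove the estimate by combining two classical ingredients of smooth convex gradient-flow theory: (i) the monotonicity of $|\nabla h(\vec\xi(t))|^2$ along the flow, and (ii) a monotone quadratic energy that controls the dissipation. The full argument then reduces to a completion-of-the-square.

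First, I would check that $t\mapsto|\nabla h(\vec\xi(t))|^2$ is non-increasing. Since $\vec\xi$ is a gradient flow, $\dot{\vec\xi}(t)=-\nabla h(\vec\xi(t))$, and the chain rule gives
\[ \frac{d}{dt}|\nabla h(\vec\xi(t))|^2 = 2\bigl\langle \nabla h(\vec\xi(t)),\, \mathrm{Hess}\,h(\vec\xi(t))\,\dot{\vec\xi}(t)\bigr\rangle = -2\,\nabla h(\vec\xi(t))^{\!\top}\,\mathrm{Hess}\,h(\vec\xi(t))\,\nabla h(\vec\xi(t))\le 0, \]
because convexity of $h$ means $\mathrm{Hess}\,h$ is positive semi-definite. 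Consequently
\[ \int_0^\tau t\,|\nabla h(\vec\xi(t))|^2\,dt \ge \frac{\tau^2}{2}\,|\nabla h(\vec\xi(\tau))|^2. \]

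Next, I would introduce the auxiliary energy
\[ F(t) := \tfrac12|\vec\xi(t)-\vec\xi^*|^2 + t\bigl(h(\vec\xi(t))-h(\vec\xi^*)\bigr), \]
differentiate along the flow, and invoke the subgradient inequality $\langle\nabla h(\vec\xi(t)),\vec\xi(t)-\vec\xi^*\rangle\ge h(\vec\xi(t))-h(\vec\xi^*)$ to get
\[ F'(t) = -\langle \vec\xi(t)-\vec\xi^*,\nabla h(\vec\xi(t))\rangle + \bigl(h(\vec\xi(t))-h(\vec\xi^*)\bigr) - t|\nabla h(\vec\xi(t))|^2 \le -t|\nabla h(\vec\xi(t))|^2. \]
Integrating from $0$ to $\tau$ and combining with the first step yields
\[ \tfrac{\tau^2}{2}|\nabla h(\vec\xi(\tau))|^2 + \tfrac12|\vec\xi(\tau)-\vec\xi^*|^2 + \tau\bigl(h(\vec\xi(\tau))-h(\vec\xi^*)\bigr) \le \tfrac12|\vec\xi(0)-\vec\xi^*|^2. \]

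Finally, I would bound the term $\tau(h(\vec\xi^*)-h(\vec\xi(\tau)))$ using convexity from the other side, $h(\vec\xi^*)-h(\vec\xi(\tau))\le\langle\nabla h(\vec\xi^*),\vec\xi^*-\vec\xi(\tau)\rangle$. Completing the square in $\vec\xi(\tau)-\vec\xi^*+\tau\nabla h(\vec\xi^*)$ absorbs the cross term and produces a remainder of $\tfrac{\tau^2}{2}|\nabla h(\vec\xi^*)|^2$; discarding the non-positive square gives
\[ \tfrac{\tau^2}{2}|\nabla h(\vec\xi(\tau))|^2 \le \tfrac12|\vec\xi(0)-\vec\xi^*|^2 + \tfrac{\tau^2}{2}|\nabla h(\vec\xi^*)|^2, \]
and dividing through by $\tau^2/2$ yields the stated inequality.

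The main obstacle is purely algebraic bookkeeping: one must track signs carefully across the two distinct applications of convexity (once at $\vec\xi(t)$ in Step 2, once at $\vec\xi^*$ in Step 3) and then identify the correct completion of the square at the end. All analytic input is concentrated in the non-negativity of the Hessian together with the two subgradient inequalities; no strong convexity, Lipschitz gradient, or other smoothness estimate is needed.
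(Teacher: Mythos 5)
Your proof is correct and complete. The paper does not prove this lemma; it simply cites \cite[Theorem~4.3.2]{Gigli}, where it is established in the generality of gradient flows in metric spaces. Your argument is the standard self-contained Euclidean proof, based on the Lyapunov-type energy $F(t)=\tfrac12|\vec\xi(t)-\vec\xi^*|^2+t\bigl(h(\vec\xi(t))-h(\vec\xi^*)\bigr)$ familiar from first-order convex optimization.

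I verified each step under the convention $\dot{\vec\xi}=-\nabla h(\vec\xi)$, which is indeed the convention used in the paper (compare \eqref{CF2} with \eqref{eq:deriv_engy}). Step~1 (the Hessian computation showing $t\mapsto|\nabla h(\vec\xi(t))|^2$ is non-increasing, hence $\int_0^\tau t\,|\nabla h(\vec\xi(t))|^2\,dt\ge\tfrac{\tau^2}{2}|\nabla h(\vec\xi(\tau))|^2$) is correct. Step~2 (the subgradient inequality at $\vec\xi(t)$ giving $F'(t)\le -t|\nabla h(\vec\xi(t))|^2$, then integrating) is correct, with $F(0)=\tfrac12|\vec\xi(0)-\vec\xi^*|^2$. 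Step~3 (the subgradient inequality at $\vec\xi^*$ to bound $\tau\bigl(h(\vec\xi^*)-h(\vec\xi(\tau))\bigr)\le\tau\langle\nabla h(\vec\xi^*),\vec\xi^*-\vec\xi(\tau)\rangle$, followed by completing the square in $\vec\xi(\tau)-\vec\xi^*+\tau\nabla h(\vec\xi^*)$ and discarding the non-positive term) closes the argument exactly as claimed. This is genuinely more elementary than the cited reference: you exploit $C^2$-smoothness and the Hessian directly, whereas the metric-space version must work with upper gradients and EVI-type formulations. For the purposes of this paper, where $h=\widetilde{\mathcal{E}}^I$ is explicitly a $C^2$-smooth convex function on $\mathbb{R}^{\overline{V}}$, your proof is a clean and fully adequate substitute for the citation.
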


With the help of Theorem \ref{Main_result_1}, Proposition \ref{prop:strata} and Lemma \ref{gradient_flow}, we can prove the convergence of the combinatorial Ricci flow.

\begin{proof}[Proof of Theorem \ref{Main_result_3}]
    By Lemma \ref{solu}, there exists a unique solution $\vec{S}(t)$ to the flow \eqref{CF2} with initial value $\vec{S}(0):=\ln \vec{k}(0) \in \partial^J\Omega_k$ and $t\in[0,\infty)$. 
    Let $\vec{k}(t)$ be the preimage of $\vec{S}(t)$ by the diffeomorphism $\ln: \partial^J\Omega_k \to \partial^J\Omega_S$. 
    Then $\vec{k}(t)$ is a solution of \eqref{CRF1} with initial value $\vec{k}(0)$. 
    By Proposition \ref{prop:strata}, let $\vec{k}^*:= \widehat{\Phi}^{-1}(\vec{T}) \in \partial^I\Omega_k$.

 
 It remains to show that $\vec{k}(t)\rightarrow \vec{k}^*$ as $t\to +\infty$.
 We define an auxiliary curve $\vec{k}^A(t)\ (t \geq 0)$ in $\partial^J\Omega_k$ as 
$$
\vec{k}_v^A(t):=
\begin{cases}
1/t, & \text{if } v \in I \backslash J, \\
k_v^*, & \text{if } v \notin I.
\end{cases}
$$
 By Theorem \ref{Main_result_1}, we have 
 \begin{align}\label{limit_flow}
    \lim_{t \to \infty} T_v(\vec{k}^A(t)) =  T_v
 \end{align}
 holds for all $v \in V$. 
 Let $\vec{S}^A(t) := \ln \vec{k}^A(t)$ be the image of this curve in $\Omega^J_S$ under the change of variables.
 By Lemma \ref{gradient_flow} and the fact that $\vec{S}(t)$ is the gradient flow of $\widetilde{\mathcal{E}}^I$, we have 
 \begin{equation}\label{eq:limite_curvature}
 \left\lvert \nabla \widetilde{\mathcal{E}}^I(\vec{S}(t)) \right\rvert^2 
 \le \left\lvert \nabla \widetilde{\mathcal{E}}^I(\vec{S}^A(t)) \right\rvert^2 + \frac{1}{t^2} \left\lvert\vec{S}^A(t)-\vec{S}(0)\right\rvert^2,\quad \forall t>0.
 \end{equation}
Consider the convergence of RHS of \eqref{eq:limite_curvature}. 
Recall \eqref{eq:deriv_engy} that 
\begin{equation*}
    \left\lvert \nabla \widetilde{\mathcal{E}}^I(\vec{S}(t)) \right\rvert^2 = \sum_{v \in V} (T_v(\vec{k}(t)) - T_v)^2 
    \quad \text{and} \quad 
    \left\lvert \nabla \widetilde{\mathcal{E}}^I(\vec{S}^A (t)) \right\rvert^2 = \sum_{v \in V} (T_v(\vec{k}^A(t)) - T_v)^2 .
\end{equation*}
By the convergence \eqref{limit_flow}, we have the first term of RHS of \eqref{eq:limite_curvature} tends to $0$ as $t$ tends to infinity. 
The convergence of the second term of RHS of \eqref{eq:limite_curvature} follows from the observation that 
\begin{align*}
    \left\lvert \vec{S}^A(t)-\vec{S}(0) \right\rvert^2 
    &= O (|\ln t|^2) .
\end{align*}
By applying convergences above to \eqref{eq:limite_curvature}, we have
$$\sum_{v \in V} (T_v(\vec{k}(t)) - T_v)^2 = \left\lvert \nabla \widetilde{\mathcal{E}}^I(\vec{S}(t)) \right\rvert^2$$
tends to $0$ as $t$ tends to infinity, 
that is, 
 \begin{align*}
     \lim_{t\rightarrow\infty}\widehat{\Phi}(\vec{k}(t))=\vec{T}\in\partial_0^I\Omega_T .
 \end{align*}
 Combining Theorem \ref{Main_result_1}, we finish the proof. 
\end{proof}

\section{Acknowledgments}
All the authors sincerely express their gratitude to Xin Nie for his abundant and generous communication on the topic of circle packing and related fields. 

Guangming Hu is supported by NSFC  (No. 12101275) and Natural Science Research Start-up Foundation of Recruiting Talents of Nanjing University of Posts and Telecommunications (No. NY224040). 
Sicheng Lu is supported by China Postdoctoral Science Foundation (Certificate Number 2024M753071).
Dong Tan is supported by the NSFC (No. 12001122, 12271533, 12361014) and the special grants for Guangxi Ba Gui Scholars.
Youliang Zhong is partially supported by NSFC (No. 12271174) and
Guangdong Basic and Applied Basic Research Foundation (No. 2023A1515010929), 
and Guangzhou Science and Technology Program (No. 202201010160).
Puchun Zhou is supported by Shanghai Science and Technology Program (Project No. 22JC1400100).

\begin{appendices}
\newcommand{\inner}[2]{\langle #1, #2 \rangle}
\newcommand{\bvec}{\boldsymbol}
\section{Hyperbolic calculations}\label{sec:app}
We prove Theorem \ref{thm:T123}, Corollary \ref{coro:k_P} and part of Lemma \ref{lem:closed_form} in this appendix.

\subsection{The hyperboloid and Klein model of hyperbolic plane}\label{AppSec:model}
We first recall some basic concepts and formulae for hyperbolic plane. See \cite[Chapter 3]{Rat94}, \cite[Chapter 2]{thurston} for more details.

The \DEF{Lorentzian 3-space} $\Lrz$ is the vector space $\mathbb{R}^3$ endowed with the Lorentzian inner product 
\begin{equation}
    \inner{\bvec{x}}{\bvec{y}} :=x_1 y_1 + x_2 y_2 - x_3 y_3.
\end{equation} 
A vector $ \bvec{x} \in\Lrz$ is said to be \DEF{time-like, space-like and light-like} if $\inner{\bvec{x}}{\bvec{x}}$ is negative, positive and zero. The \DEF{light cone} is the set of all light-like vectors in $\Lrz$.

The hyperbolic plane is the subset
\[ \mathbb{H}:=\big\{\ \bvec{x} \in\Lrz\ \lvert\ \inner{\bvec{x}}{\bvec{x}} = -1,\ x_3>0\ \big\}. \]
The Lorentzian inner product induces a Riemannian metric on $\mathbb{H}$. Geodesics in $\mathbb{H}$ are the intersection of $\mathbb{H}$ with planes through the origin of $\Lrz$. 

Every time-like $ \bvec{x}\in\Lrz$ can be normalized as $\bar{\bvec{x}}:=\bvec{x}/\sqrt{-\inner{\bvec{x}}{\bvec{x}}} \in \mathbb{H}$. 
Every light-like $\bvec{x}\in\Lrz$ can be normalized as $\bar{\bvec{x}}:=(x_1,x_2,1)$ with $x_1^2+x_2^2=1$. 
And every space-like $\bvec{x}\in\Lrz$ can be normalized as $\bar{\bvec{x}}:=\bvec{x}/\sqrt{\inner{\bvec{x}}{\bvec{x}}}$. It is also associated with a \DEF{dual geodesic} $\bvec{x}^\perp:=\{\ \bvec{y} \in\mathbb{H}\ \lvert\ \inner{\bvec{x}}{\bvec{y}} =0\ \}$. In this case, every point $\bvec{y}\in \bvec{x}^\perp$ determines a geodesic lying on the plane through $\bvec{x},\bvec{y}$ and the origin. This geodesic is always perpendicular to $\bvec{x}^\perp$.

The hyperbolic distance is strongly related to the Lorentzian inner product. This induces a simple and uniform expression for generalized circles.
\begin{proposition}\label{prop:dist_and_inner} Let $\bvec{x},\bvec{y}$ be two different points in $\Lrz$. ~
\begin{enumerate}
    \item For time-like $\bvec{x},\bvec{y}$, the hyperbolic distance between the normalized vectors $\bar{\bvec{x}},\bar{\bvec{y}}\in\mathbb{H}$ 
    is given by
    \[ \cosh d(\bar{\bvec{x}},\bar{\bvec{y}}) = - \frac{ \inner{\bvec{x}}{\bvec{y}} }{ \sqrt{ \inner{\bvec{x}}{\bvec{x}} \inner{\bvec{y}}{\bvec{y}}} }. \]
    \item For time-like $\bvec{x}$ and space-like $\bvec{y}$, the distance between $\bar{\bvec{x}}\in\mathbb{H}$ and the dual geodesic $\bvec{y}^\perp$ is given by
    \[ \sinh d(\bar{\bvec{x}},\bvec{y}^\perp) = \pm \frac{ \inner{\bvec{x}}{\bvec{y}} }{ \sqrt{ - \inner{\bvec{x}}{\bvec{x}} \inner{\bvec{y}}{\bvec{y}} } }. \]
    The signature is negative if and only if the plane through $\bvec{y}^\perp$ and the origin separates $\bvec{x},\bvec{y}$. 
    \item Let $\bvec{x},\bvec{y}$ be space-like.
    \begin{itemize}
        \item If the plane through $\bvec{x},\bvec{y}$ and the origin intersects $\mathbb{H}$, then the two dual geodesics $\bvec{x}^\perp, \bvec{y}^\perp$ are disjoint and the distance between them is given by
        \[ \cosh d(\bvec{x}^\perp, \bvec{y}^\perp) = \pm \frac{ \inner{\bvec{x}}{\bvec{y}}  }{ \sqrt{ \inner{\bvec{x}}{\bvec{x}}  \inner{\bvec{y}}{\bvec{y}} } }. \]
        The choice of signature is similar to the previous case.
        \item If the plane through $\bvec{x},\bvec{y}$ and the origin is disjoint from $\mathbb{H}$, then the two dual geodesics $\bvec{x}^\perp, \bvec{y}^\perp$ intersect and the intersection angle between them is given by
        \[ \cos \theta(\bvec{x}^\perp, \bvec{y}^\perp) = \pm \frac{ \inner{\bvec{x}}{\bvec{y}}  }{ \sqrt{ \inner{\bvec{x}}{\bvec{x}}  \inner{\bvec{y}}{\bvec{y}}  } }. \]
        \item If the plane through $\bvec{x},\bvec{y}$ and the origin is tangent to the light cone, then the two dual geodesics $\bvec{x}^\perp, \bvec{y}^\perp$ intersect at the light cone and $ \inner{\bvec{x}}{\bvec{y}} / \sqrt{ \inner{\bvec{x}}{\bvec{x}} \inner{\bvec{y}}{\bvec{y}} } = \pm 1$.
    \end{itemize}
    \item A horocycle in $\mathbb{H}$ is given by
    \[ C(\bvec{v},t):=\{\ \bvec{x}\in\mathbb{H}\ \lvert\ \inner{\bvec{x}}{\bvec{v}} = t < 0 \ \}, \]
    where $\bvec{v}$ is a light-like vector normalized as $v_3=1$ and $t$ is a negative constant. 
\end{enumerate}
\end{proposition}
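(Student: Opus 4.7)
The plan is to exploit the transitive action of the identity component of the Lorentz group $O^+(2,1)$ on $\mathbb{H}$ (and on various flag-like configurations in $\mathbb{E}^{2,1}$) to reduce each formula to a canonical position, then verify it by direct computation. Since all quantities on both sides of each identity are invariant under isometries of $\mathbb{H}$ (equivalently, under Lorentz transformations that preserve $\mathbb{E}^{2,1}$), it suffices to compute them once at a well-chosen representative in each orbit. This is the standard strategy for establishing hyperboloid-model identities; the individual computations are elementary trigonometry/hyperbolic trigonometry, so the interesting part is setting up the orbits correctly.

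For part (1), I would move $\bar{\bvec{x}}$ to the basepoint $(0,0,1)\in\mathbb{H}$ by an isometry and rotate so that $\bar{\bvec{y}}=(\sinh r, 0, \cosh r)$ where $r=d(\bar{\bvec{x}},\bar{\bvec{y}})$; then $\inner{\bar{\bvec{x}}}{\bar{\bvec{y}}}=-\cosh r$ and the stated formula drops out after unnormalizing. For part (2), note that a space-like $\bvec{y}$ and a time-like $\bvec{x}$ together span a plane through the origin; applying a Lorentz transformation we may assume $\bar{\bvec{x}}=(0,0,1)$ and $\bar{\bvec{y}}=(1,0,0)$. Then $\bvec{y}^\perp$ is the geodesic $\{x_1=0\}\cap\mathbb{H}$, and the shortest-distance projection of $\bar{\bvec{x}}$ onto it is $\bar{\bvec{x}}$ itself; more generally, moving $\bar{\bvec{x}}$ to $(\sinh \rho, 0, \cosh \rho)$ gives $\inner{\bar{\bvec{x}}}{\bar{\bvec{y}}}=\sinh \rho$, which establishes the identity up to sign. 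The sign is then dictated by which side of the plane $\mathrm{span}(\bvec{y}^\perp,0)$ the point $\bvec{x}$ lies on, and a direct check in the canonical frame nails down the stated criterion.

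Part (3) splits into three sub-cases according to the causal type of the plane $\Pi=\mathrm{span}(\bvec{x},\bvec{y})$. When $\Pi\cap\mathbb{H}\neq\varnothing$, the plane carries a Lorentzian $\mathbb{E}^{1,1}$-structure and its orthogonal complement $\Pi^\perp$ is a time-like line; by a Lorentz transformation I may assume $\bvec{x}=(1,0,0)$ and $\bvec{y}=(\cosh d, \sinh d, 0)$ (normalized), in which case $\bvec{x}^\perp,\bvec{y}^\perp$ are ultraparallel geodesics at common perpendicular distance $d$, and $\inner{\bvec{x}}{\bvec{y}}=\cosh d$. When $\Pi$ is space-like ($\Pi\cap\mathbb{H}=\varnothing$), I would normalize to $\bvec{x}=(1,0,0)$, $\bvec{y}=(\cos\theta,\sin\theta,0)$; the duals $\bvec{x}^\perp,\bvec{y}^\perp$ then meet at $(0,0,1)$ with crossing angle $\theta$, and $\inner{\bvec{x}}{\bvec{y}}=\cos\theta$. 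The tangent (light-like) case is the degenerate limit, with the two dual geodesics sharing an ideal point; a direct evaluation in a canonical frame gives $\inner{\bvec{x}}{\bvec{y}}/\sqrt{\inner{\bvec{x}}{\bvec{x}}\inner{\bvec{y}}{\bvec{y}}}=\pm 1$. The signs in all three sub-cases are checked by the same method as in (2). For part (4), the claim is essentially a definition once one knows how horocycles sit in the hyperboloid model: fixing a light-like $\bvec{v}$ with $v_3=1$, the level set $\{\bvec{x}\in\mathbb{H}:\inner{\bvec{x}}{\bvec{v}}=t\}$ is an orbit of the parabolic stabilizer of $\bvec{v}$, hence a horocycle centered at the ideal point $[\bvec{v}]$; I would verify that the parameter $t<0$ sweeps out all horocycles at $[\bvec{v}]$ by computing the hyperbolic radius explicitly in a canonical frame.

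The main obstacle I expect is bookkeeping rather than a single hard step: the sign conventions in parts (2) and (3) depend on the global configuration of $\bvec{x}, \bvec{y}$ relative to the plane $\bvec{y}^\perp\cdot 0$ (or the relevant separating plane), so I will need a careful geometric argument—ideally based on continuity and the connectedness of each configuration class under $O^+(2,1)$—to show that the sign rule stated in the proposition is correct across the whole orbit, not merely at the chosen representative. Once the signs are pinned down in one frame and one verifies that flipping the separating condition flips the sign, the rest is routine.
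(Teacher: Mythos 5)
The paper itself does not prove Proposition \ref{prop:dist_and_inner}; it cites Ratcliffe and Thurston for these standard facts. Your strategy of normalizing to a canonical frame via the transitive action of $O^+(2,1)$ and computing directly is the usual textbook route and is sound in outline, and your treatment of parts (1), (2), (4) and of the second and third sub-cases of (3) is correct.

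There is, however, a concrete error in the first sub-case of part~(3). You claim that when $\Pi=\mathrm{span}(\bvec{x},\bvec{y})$ meets $\mathbb{H}$ one may normalize $\bvec{x}=(1,0,0)$, $\bvec{y}=(\cosh d,\sinh d,0)$, and that $\Pi^\perp$ is time-like. Both statements are false. With your choice, $\Pi$ is the plane $\{x_3=0\}$, which is space-like and disjoint from $\mathbb{H}$; that is precisely the second sub-case, where $\bvec{x}^\perp$ and $\bvec{y}^\perp$ \emph{intersect} at $(0,0,1)$ rather than being ultraparallel. Moreover, if $\Pi$ has Lorentzian signature $(1,1)$ then its orthogonal complement in signature $(2,1)$ has signature $(1,0)$, hence is space-like, not time-like (time-like $\Pi^\perp$ occurs exactly when $\Pi$ is space-like, which is your chosen frame). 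The correct normalization puts the time-like direction inside $\Pi$: take $\bvec{x}=(1,0,0)$ and $\bvec{y}=(\cosh d,0,\sinh d)$, so $\Pi=\{x_2=0\}$ is Lorentzian and $(0,0,1)\in\Pi\cap\mathbb{H}$. Then $\bvec{x}^\perp=\{x_1=0\}\cap\mathbb{H}$ and $\bvec{y}^\perp=\{x_1\cosh d-x_3\sinh d=0\}\cap\mathbb{H}$ are disjoint, their common perpendicular lies in $\Pi$ and joins $(0,0,1)$ to $(\sinh d,0,\cosh d)$ (length $d$), and $\inner{\bvec{x}}{\bvec{y}}=\cosh d$, which matches the claimed formula. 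Once this frame is corrected the argument goes through.
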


\begin{lemma}\label{lem:gnr_circle}
Every generalized circle is a set 
\[ C(\bvec{c},R) := \left\{\ \bvec{x}\in\mathbb{H}\ \big\lvert\ \inner{\bvec{x}}{\bvec{c}}=R\ \right\} \]
determined by a vector $\bvec{c}\in\Lrz$ and a real number $R\in\mathbb{R}$.
We call $C(\bvec{c},R)$ a generalized circle centered at $\bvec{c}$.
\end{lemma}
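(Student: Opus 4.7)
The plan is to verify the lemma by a case analysis according to the three types of generalized circles, applying the distance formulas collected in Proposition~\ref{prop:dist_and_inner}. The horocycle case is already contained in part~(4) of that proposition, with $\bvec{c}$ light-like, so I need to reduce the circle and hypercycle cases to the corresponding distance formulas.

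For a hyperbolic circle, let $p\in\mathbb{H}$ be its center and $r>0$ its radius. By part~(1) of Proposition~\ref{prop:dist_and_inner}, for $\bvec{x}\in\mathbb{H}$ one has $\cosh d(\bvec{x},p) = -\inner{\bvec{x}}{p}$, so the circle is precisely $\{\bvec{x}\in\mathbb{H} : \inner{\bvec{x}}{p}=-\cosh r\}$. Hence the choice $\bvec{c}=p$ (time-like) and $R=-\cosh r$ realizes the circle as $C(\bvec{c},R)$. For a hypercycle of radius $r>0$ with geodesic axis $\gamma$, pick a space-like vector $\bvec{c}\in\Lrz$ with $\inner{\bvec{c}}{\bvec{c}}=1$ and $\bvec{c}^{\perp}=\gamma$; part~(2) of Proposition~\ref{prop:dist_and_inner} gives $\sinh d(\bvec{x},\gamma)=\pm\inner{\bvec{x}}{\bvec{c}}$. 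The hypercycle is one of the two connected components of the equidistant locus, so it is cut out by $\inner{\bvec{x}}{\bvec{c}}=\pm\sinh r$, with the sign fixed by the side of $\gamma$ on which the hypercycle lies. Thus $\bvec{c}$ space-like together with $R=\pm\sinh r$ works.

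I would then remark on the uniform interpretation: in all three cases $\bvec{c}$ is the Lorentzian ``pole'' of the generalized circle, which degenerates to a point of $\mathbb{H}$ (time-like), an ideal point on the light cone (light-like), or the dual geodesic $\bvec{c}^{\perp}$ (space-like), matching the definitions of center given in Section~\ref{2}. The value $R$ encodes the geodesic curvature: it has absolute value $\cosh r$, $|t|$, and $\sinh r$ respectively, which via Table~\ref{table:relation} translates to $k=\coth r,\,1,\,\tanh r$.

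The only mild subtlety, rather than a genuine obstacle, is the sign ambiguity in the hypercycle case: one must check that exactly one of the two affine slices $\inner{\bvec{x}}{\bvec{c}}=\pm\sinh r$ meets $\mathbb{H}$ in the prescribed component of the equidistant set, which is immediate from the signature convention recorded in part~(2) of Proposition~\ref{prop:dist_and_inner}. Replacing $\bvec{c}$ by $-\bvec{c}$ exchanges the two components, so after a choice of orientation the representation $C(\bvec{c},R)$ is well-defined.
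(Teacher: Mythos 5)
Your argument is correct and is essentially the same reduction the paper makes: it treats the circle, horocycle, and hypercycle cases separately via the distance formulas of Proposition~\ref{prop:dist_and_inner} (parts~(1), (4), and (2) respectively), which is exactly what the paper's one-sentence proof invokes. Your version just makes the case split and the sign bookkeeping explicit, which is a fleshed-out rather than a different argument.
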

\begin{proof}
    A circle is the set of points that are equidistant from the center point, while a hypercycle is defined as the set of points equidistant from a center geodesic. This lemma is simply a rephrasing of origin definitions, using results in Proposition \ref{prop:dist_and_inner}.
\end{proof}

\newcommand{\DDK}{\mathbb{D}_K}
\medskip
The normalized vector in $\Lrz$ behaves discontinuously when switching from time- or space-like to light-like.
We shall pass to the Klein model for a solution and a more consistent computation. 
Let $K:=\{\ \bvec{x}\in\Lrz\ \lvert\ x_3=1\ \}$. The \DEF{Klein model} for hyperbolic plane is obtained by mapping $\mathbb{H}$ to $K$ through a projection centered at the origin of $\Lrz$. Then $\mathbb{H}$ is mapped to the unit disk $\DDK:=\{\ (x_1,x_2,1)\ \lvert\ x_1^2+x_2^2<1\ \}$ of $K$, endowed with the push-forward metric. 
Essentially the whole space $\Lrz$ projects to a real projective plane. However, under some suitable settings, the plane $K$ is enough for our usage. 

Geodesics in $\DDK$ are simply straight lines inside the disk. And every point $\bvec{x}$ outside the closed unit disk gives rise to a dual geodesic $\bvec{x}^\perp$ as before. This geodesic is the line that connects the points of tangency of the two tangent lines to $\DDK$ at point $\bvec{x}$. Formulae in Proposition \ref{prop:dist_and_inner} are still available. And the center of a generalized circle in Lemma \ref{lem:gnr_circle} can be chosen anywhere on $K$. 

\begin{figure}[t]
    \centering
    \includegraphics[width=1\linewidth]{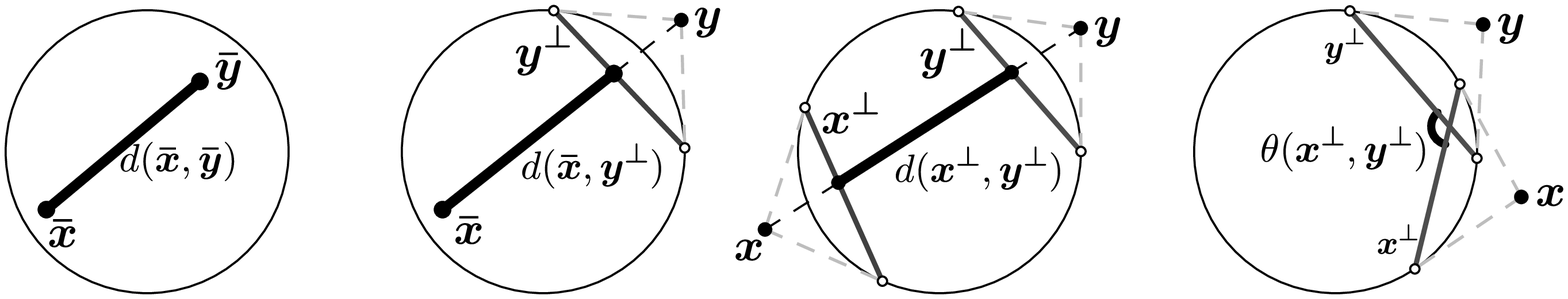}
    \caption{\small The relations between hyperbolic distance and Lorentzian inner product in various case of Proposition \ref{prop:dist_and_inner}, shown in Klein model.}
    \label{fig:inner_product}
\end{figure}

\begin{lemma}\label{lem:geod_curvt}
    The geodesic curvature of the generalized circle centered at $\bvec{c}=(c_1,c_2,1)\in K$ and passing through $\bvec{z}=(z_1,z_2,1) \in \DDK$, with $\inner{\bvec{c}}{\bvec{z}}<0$, is
    \begin{equation}\label{Appeq:geod_curvt}
        k = \frac{ -\inner{\bvec{c}}{\bvec{z}} }{ \sqrt{\inner{\bvec{c}}{\bvec{z}}^2 - \inner{\bvec{c}}{\bvec{c}}\inner{\bvec{z}}{\bvec{z}} } } \ .
    \end{equation}
    The formula still holds when $\bvec{c}$ is light-like.
\end{lemma}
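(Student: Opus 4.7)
The plan is to verify the formula separately in each of the three regimes for the center $\bvec{c}$ (time-like, light-like, space-like), corresponding respectively to a circle, a horocycle, and a hypercycle, and to observe in each case that Table~\ref{table:relation} together with Proposition~\ref{prop:dist_and_inner} yields the same closed-form expression. The uniformity of the answer across all three cases is the content of the lemma, so the argument is really a bookkeeping of signs and square roots.

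First, suppose $\bvec{c}$ is time-like, so $\inner{\bvec{c}}{\bvec{c}}<0$ and $C(\bvec{c},R)$ is a genuine circle. Its radius $r$ is the hyperbolic distance from $\bvec{z}$ to the normalization $\bar{\bvec{c}}\in\mathbb{H}$, and by Proposition~\ref{prop:dist_and_inner}(1),
\[ \cosh r \;=\; -\frac{\inner{\bvec{c}}{\bvec{z}}}{\sqrt{\inner{\bvec{c}}{\bvec{c}}\inner{\bvec{z}}{\bvec{z}}}}. \]
Using Table~\ref{table:relation}, $k=\coth r=\cosh r/\sqrt{\cosh^2 r-1}$, and a direct simplification of $\cosh^2 r-1$ gives $(\inner{\bvec{c}}{\bvec{z}}^2-\inner{\bvec{c}}{\bvec{c}}\inner{\bvec{z}}{\bvec{z}})/(\inner{\bvec{c}}{\bvec{c}}\inner{\bvec{z}}{\bvec{z}})$, producing exactly \eqref{Appeq:geod_curvt}. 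The sign condition $\inner{\bvec{c}}{\bvec{z}}<0$ ensures the correct positive root.

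Next, suppose $\bvec{c}$ is space-like, so $C(\bvec{c},R)$ is a hypercycle with center geodesic $\bvec{c}^\perp$ and radius $r=d(\bvec{z},\bvec{c}^\perp)$. By Proposition~\ref{prop:dist_and_inner}(2),
\[ \sinh r \;=\; -\frac{\inner{\bvec{c}}{\bvec{z}}}{\sqrt{-\inner{\bvec{c}}{\bvec{c}}\inner{\bvec{z}}{\bvec{z}}}}, \]
where the sign is fixed by $\inner{\bvec{c}}{\bvec{z}}<0$ and $\inner{\bvec{z}}{\bvec{z}}<0<\inner{\bvec{c}}{\bvec{c}}$. Applying $k=\tanh r=\sinh r/\sqrt{\sinh^2 r+1}$ from Table~\ref{table:relation} and simplifying the radicand again collapses to $\inner{\bvec{c}}{\bvec{z}}^2-\inner{\bvec{c}}{\bvec{c}}\inner{\bvec{z}}{\bvec{z}}$, giving \eqref{Appeq:geod_curvt}.

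Finally, when $\bvec{c}$ is light-like, Proposition~\ref{prop:dist_and_inner}(4) already tells us that $C(\bvec{c},R)$ is a horocycle, for which $k=1$ by Table~\ref{table:relation}. In this case $\inner{\bvec{c}}{\bvec{c}}=0$, so the right-hand side of \eqref{Appeq:geod_curvt} reduces to $-\inner{\bvec{c}}{\bvec{z}}/\sqrt{\inner{\bvec{c}}{\bvec{z}}^2}=1$, thanks again to $\inner{\bvec{c}}{\bvec{z}}<0$. This simultaneously establishes the last sentence of the lemma: the formula is continuous across the degeneration from circle or hypercycle to horocycle. The only real obstacle is keeping careful track of signs of $\inner{\bvec{c}}{\bvec{c}}$, $\inner{\bvec{z}}{\bvec{z}}$, and $\inner{\bvec{c}}{\bvec{z}}$ so that one takes the correct branch of each square root; after that, the three computations are routine.
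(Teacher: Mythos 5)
Your proof is correct and follows the same approach as the paper's: in each of the three regimes (time-, space-, light-like center) you compute the radius from Proposition~\ref{prop:dist_and_inner}, convert to curvature via Table~\ref{table:relation}, and check the signs; you simply spell out the algebra that the paper leaves as a one-line reference.
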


\begin{proof}
    For circles and hypercycles, the formula is obtained by Proposition \ref{prop:dist_and_inner} and the relations between radius and curvature in Table \ref{table:relation}. For horocycles, the center $\bvec{c}$ is light-like and $\inner{\bvec{c}}{\bvec{c}}=0$. By the assumption $\inner{\bvec{c}}{\bvec{z}}<0$, the right-hand-side of \eqref{Appeq:geod_curvt} is $1$. 
\end{proof}

\begin{proposition}\label{prop:arc_by_kd}
      Let $\Gamma$ be an arc of a generalized circle of geodesic curvature $k$, with end points $\bvec{x},\bvec{y}\in\DDK$. Let $d=d(\bvec{x},\bvec{y})>0$ and $\theta$ be the generalized inner angle of $\Gamma$. We require $\theta\leq\pi$ when $k>1$. 
      Then the length $l$ and the total geodesic curvature $T$ of  $\Gamma$ is given by
\begin{equation}\label{Appeq:lT_in_theta}
    l(k,d)=\frac{\theta}{\sqrt{\lvert 1-k^2 \rvert}},\quad
    T(k,d)=\frac{k \theta}{\sqrt{\lvert 1-k^2 \rvert}}
\end{equation}
for all $k>0, k\neq1$ and $d>0$, where
\begin{equation}\label{Appeq:theta_in_kd}
    1+(1-k^2)(\cosh d -1) = \left\{
    \begin{array}{ll}
        \cos\theta &,\ \textrm{if}\ k>1  \\
        \cosh\theta &,\ \textrm{if}\ k<1 
    \end{array} \right.\ .
\end{equation}
Furthermore, the formula extends over $k=1$ with continuous first order derivative. In particular, when $k=1$ and $0<d<+\infty$, 
\begin{equation}\label{Appeq:lT_k=1}
    l(1,d)=T(1,d)=\sqrt{2(\cosh d -1)}\ .
\end{equation}
Hence both $l(k,d)$ and $T(k,d)$ are $C^1$-functions over $\mathbb{R}_{>0}^2$.
\end{proposition}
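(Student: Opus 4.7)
The plan is to split the argument into the three cases $k>1$, $k<1$, $k=1$, establish the length and angle–distance relations separately in each case, and then verify the $C^1$ extension across $k=1$ by a careful Taylor expansion.

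For $k>1$, the generalized circle is a genuine hyperbolic circle of radius $r$ with $k=\coth r$, so $\sinh r = 1/\sqrt{k^2-1}$ and $\cosh r = k/\sqrt{k^2-1}$. Applying the hyperbolic law of cosines to the isoceles triangle with two sides of length $r$ and apex angle $\theta$ at the center yields
\[ \cosh d = \cosh^2 r - \sinh^2 r \cos\theta, \]
and substituting the above expressions for $\sinh r, \cosh r$ gives precisely \eqref{Appeq:theta_in_kd}. The length formula $l=\theta\sinh r$ from Table \ref{table:relation} then yields $l=\theta/\sqrt{k^2-1}$, and $T=kl$ follows because the curvature is constant.

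For $k<1$, the object is a hypercycle with axis $\gamma$ and $k=\tanh r$, so $\cosh r = 1/\sqrt{1-k^2}$. The two endpoints project orthogonally to $\gamma$ at distance $\theta$ apart (by definition of the inner angle), and the standard distance formula for two points on an equidistant curve reads
\[ \cosh d = \cosh^2 r \cosh\theta - \sinh^2 r. \]
This can be derived either directly by a right-triangle argument or by computing the Lorentzian inner product of the two endpoints in the Klein model and applying Proposition \ref{prop:dist_and_inner}. Substituting the formulas for $\sinh r, \cosh r$ in terms of $k$ gives \eqref{Appeq:theta_in_kd}, and $l=\theta\cosh r = \theta/\sqrt{1-k^2}$ gives the length formula.

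For $k=1$, the horocycle case, a computation in the upper half-plane model (where a horocycle can be normalized to a horizontal line) gives the classical identity $l = 2\sinh(d/2)$, equivalent to $l^2=2(\cosh d -1)$. Since $T=kl=l$, this matches \eqref{Appeq:lT_k=1}. Finally, to verify that $l(k,d)$ and $T(k,d)$ extend $C^1$ across $k=1$, set $\epsilon := (1-k^2)(\cosh d-1)$. For $k<1$, $\cosh\theta = 1+\epsilon$ gives $\theta = \sqrt{2\epsilon}\,(1+O(\epsilon))$, so
\[ \frac{\theta}{\sqrt{1-k^2}} = \sqrt{2(\cosh d -1)}\,(1+O(1-k^2)), \]
and an analogous expansion holds for $k>1$ using $\cos\theta = 1+\epsilon$ (with $\epsilon\leq 0$). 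A further term in the Taylor expansion shows the $k$- and $d$-derivatives agree from both sides and match the derivatives of $\sqrt{2(\cosh d -1)}$ at $k=1$. The main obstacle is precisely this $C^1$ matching: the formulas contain the factor $1/\sqrt{|1-k^2|}$, which is singular at $k=1$, but this singularity is exactly cancelled by the matching simple zero of $\theta$ at $k=1$, and one must track the expansion to order $O(1-k^2)$ to confirm that the first derivatives glue smoothly.
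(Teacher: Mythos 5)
Your proposal follows essentially the same route as the paper: the hyperbolic law of cosines in the isoceles triangle for $k>1$, the right-angled quadrilateral (Lambert) identity $\cosh d = \cosh^2 r\cosh\theta - \sinh^2 r$ for $k<1$, the upper half-plane computation for $k=1$, and the Taylor expansion of $\arccos(1-x)$ and $\operatorname{arccosh}(1+x)$ near $x=0$ for the $C^1$ matching. The only difference is that you sketch rather than carry out the final expansion of $\partial l/\partial k$ and $\partial l/\partial d$ to order $O(1-k^2)$; the paper executes this step explicitly, finding both one-sided limits of $\partial l/\partial k$ at $k=1$ equal $D\sqrt{D}/(3\sqrt{2})$ with $D=\cosh d -1$, which you would need to verify to close the argument.
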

\begin{proof}
    (\ref{Appeq:lT_in_theta}) is simply the definition. Also see Table \ref{table:relation}.

    When $k>1$, let $\bvec{c}$ be the center of the circle. 
    By hyperbolic cosine law in the triangle with vertices $\bvec{x},\bvec{y},\bvec{c}$, 
    $ \cosh d = \cosh^2 r - \sinh^2 r \cos \theta \ . $
    Then (\ref{Appeq:theta_in_kd}) for the $k>1$ case is obtained by direct substituting.

    When $k<1$, the arc $\Gamma$, the center geodesic and the two radii at $\bvec{x},\bvec{y}$ bound a quadrilateral with 2 right angles. Then we have 
    $ \cosh d = \cosh^2 r \cosh\theta - \sinh^2 r \ . $
    The formula is obtained again by direct substituting.

    When $k=1$, \eqref{Appeq:lT_k=1} can be obtained by passing to the upper half-plane model of hyperbolic plane, with the horocycle mapped to a horizontal line.

    \medskip
    To prove the continuity, we need the following Taylor series when $x \to 0^+$:
    \begin{equation}\label{Appeq:expansion}\begin{aligned}
        \arccos(1-x) &= \sqrt{2x} + \frac{x^{3/2}}{6\sqrt2} + o(x^2) \ , \\
        \mathrm{arccosh}(1+x) &= \sqrt{2x} - \frac{x^{3/2}}{6\sqrt2} + o(x^2) \ .
    \end{aligned}\end{equation}
    
    For simplicity and future application, let 
    \begin{equation}\label{Appeq:substitute}
        D:=\cosh d -1 \quad \textrm{and} \quad K:= 1-k^2 \ .
    \end{equation} 
    Then $D'(d) = \sinh d = \sqrt{D(D+2)}$.
    
    Whenever $d$ converges in $\mathbb{R}_{>0}$, $D$ also converges in $\mathbb{R}_{>0}$ and we have 
    \begin{align*}
        \lim_{k\to 1^+} \frac{\arccos\big( 1+KD \big)}{\sqrt{-K}}  &= \lim_{K\to 0^-} \frac{\sqrt{ -2KD }}{\sqrt{-K}}  = \sqrt{ 2D }, \\
        \lim_{k\to 1^-} \frac{\arccos\big( 1+KD \big)}{\sqrt{K}}  &= \lim_{K\to 0^+} \frac{\sqrt{ 2KD }}{\sqrt{K}}  = \sqrt{ 2D }.
    \end{align*}
    Thus $l(k,d)$ is continuous on $\mathbb{R}_{>0}^2$, and so does $T(k,d)$. 

    \bigskip
    It can be shown by direct computation that when $k\neq1$,
    \begin{equation*}
        \pp{l}{d} = \sqrt{\frac{2+D}{2+KD}}
    \end{equation*}
    and it extends continuously over $k=1$. The following would also be useful: 
    \begin{equation}
        \pp{l}{D} = \frac{1}{\sqrt{D(2+KD)}} \ .
    \end{equation}

    The continuity of $\partial l / \partial k$ requires detailed computation. When $k>1$, 
    $$ \pp{l}{k} = \frac{ 2kD }{ -K\sqrt{D(2+KD)} } - \frac{ k \arccos(1+KD) }{ -K\sqrt{-K} } . $$
    By (\ref{Appeq:expansion}),
    \begin{align*}
        \lim_{k\to 1^+} \pp{l}{k}
        &= \lim_{K\to 0^-} \left( \frac{2\sqrt{D}}{-K\sqrt{2+KD}} - \frac{\sqrt{-2KD}+(-KD)^{3/2}/6\sqrt{2}}{-K\sqrt{-K}} \right) \\
        &= \lim_{K\to 0^-} \frac{\sqrt{2D}}{-K} \left( \frac{\sqrt{2}}{\sqrt{2+KD}} -1 \right) - \frac{D\sqrt{D}}{6\sqrt{2}} 
        = \frac{D\sqrt{D}}{3\sqrt{2}} \ .
    \end{align*}

    When $k<1$,
    $$ \pp{l}{k} = \frac{k\ \mathrm{arccosh}(1+KD)}{K\sqrt{K}} - \frac{2kD}{K\sqrt{D(2+KD)}} \ . $$
    By (\ref{Appeq:expansion}),
    \begin{align*}
        \lim_{k\to 1^-} \pp{l}{k}
        &= \lim_{K\to 0^+} \left( \frac{\sqrt{2KD}-(KD)^{3/2}/6\sqrt{2}}{K\sqrt{K}} - \frac{2\sqrt{D}}{K\sqrt{2+KD}} \right) \\
        &= \lim_{K\to 0^+} \frac{\sqrt{2D}}{K} \left( 1- \frac{\sqrt{2}}{\sqrt{2+KD}} \right) - \frac{D\sqrt{D}}{6\sqrt{2}} 
        = \frac{D\sqrt{D}}{3\sqrt{2}} \ .
    \end{align*}
    Hence $\pp{l}{k}$ is continuous at $k=1$. These show that $l=l(k,d)$ is a $C^1$-function on $\mathbb{R}_{>0}^2$, and so does $T=T(k,d)=k\cdot l(k,d)$. 
\end{proof}

\subsection{Three-circle configuration}\label{AppSec:2}
Now we turn to the three-circle configuration and the circle packed triangle, using the hyperboloid and Klein model this time.

We start with the hyperboloid model. Suppose in a three-circle configuration, for $i=1,2,3$, the circle $C_i$ is given by $C(\bvec{c_i},R_i)$ as Lemma \ref{lem:gnr_circle}, $\bvec{c_i}\in\Lrz,\ R_i\in\mathbb{R}$.
With the help of dual geodesic, in the definition of (possibly degenerated) generalized circle packed triangle, when some $C_i$ is actually a hypercycle (or a geodesic), the vertex $\bvec{v_i}$ is a segment on the dual geodesic $\bvec{c_i}^\perp$. An edge $e_{ij}$ is the geodesic segment realizing the inner product $\inner{ \bvec{\bar{c}_i} }{ \bvec{\bar{c}_i} }$ as Figure \ref{fig:inner_product}. This segment may have infinite or zero length. 

\bigskip
Before passing to the Klein model, we need some normalization. Since the dual triangle always has non-empty interior (even for degenerated ones in Section \ref{3}), we can apply a hyperbolic isometry so that $(0,0,1)$ lies in the interior of dual triangle. Under such setting, the boundary of circle packed triangle never passes $(0,0,1)$ and all centers $\bvec{c_i}$ will not lie on the horizontal plane $\{x_3=0\}\subset\Lrz$. Hence everything can be projected to the plane $K$, and the origin $(0,0,1)$ always lies in the interior of a triangle. We will stick to this normalization for circle packed triangles form now on.

Now let $\bvec{x_i}$ be the projection of $\bvec{c_i}$ in $K$. 
Since the three circles are externally tangent to each other, and with the normalization condition, we have $\inner{\bvec{x_i}}{\bvec{x_j}}<0$ for any $i \neq j$. 
The vertices of the dual triangle $\bvec{t_{12}}, \bvec{t_{23}}, \bvec{t_{31}}$ must lie in $\DDK$. 
Let $(i,j,k)$ be a cyclic re-ordering of $(1,2,3)$. By our setting, $\bvec{t_{ij}}$ lies inside the straight line segment connecting $\bvec{x_i}$ and $\bvec{x_j}$ , hence 
$$ \exists \ \lambda_k^i, \lambda_k^j \in (0,1)\ \textrm{with}\ \lambda_k^i+\lambda_k^j =1,\ \textrm{such that}\ \bvec{t_{ij}} = \lambda_k^i \bvec{x_i} + \lambda_k^j \bvec{x_j} $$ 
as vectors in $K\subset\Lrz$. See Figure \ref{fig:3circle_Klein} for the settings. 
We will compute required quantities by studying the relations between these coefficients. 

Let $c_{ij}:=\inner{\bvec{x_i}}{\bvec{x_j}}$ for all $i,j\in\{1,2,3\}$, and define $\delta_{ij}:=c_{ij}^2-c_{ii}c_{jj}$ when $i\neq j$.
Then when $i\neq j$, $c_{ij}<0$ by assumption. By the condition that the straight line segment connecting $\bvec{x_i}$ and $\bvec{x_j}$ intersects $\DDK$, it can be shown that $\delta_{ij}>0$.

\begin{figure}[t]
    \centering
    \includegraphics[width=0.8\linewidth]{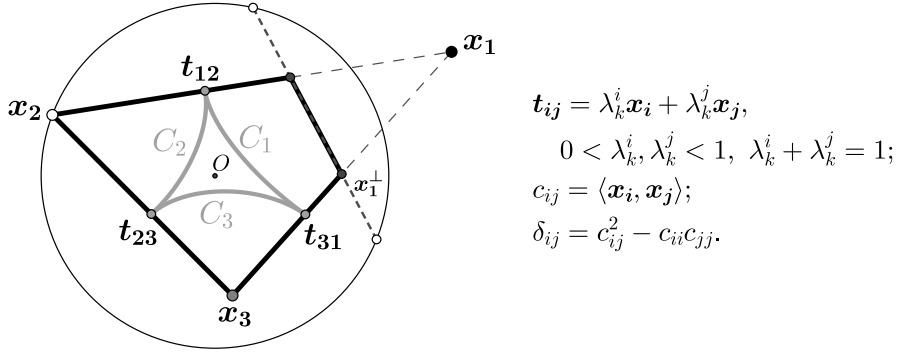}
    \caption{\small The same circle packed triangle as Figure \ref{fig:general3circle} but viewed in the Klein model. Note that circles are general conics in Klein model. The origin of $\mathbb{D}_K$ is assumed to be contained in the dual triangle. All used notations are gathered on the right.}
    \label{fig:3circle_Klein}
\end{figure}

\bigskip
Before the computation, note the symmetry of the settings of three-circle configuration. Any formula stated for a cyclic re-ordering of $(1,2,3)$ would still hold if any two indices are exchanged. This is called \DEF{the symmetry principle} for short. 

It is remarkable that all the settings above and computations below are available even when some $k_i=1$. That is the core reason for choosing the Klein model and using a proof in the flavor of projective geometry. 

    \begin{proof}[Proof of Theorem \ref{thm:T123}]
        We would use Proposition \ref{prop:arc_by_kd} to convert the computation of $T_i$ into the computation of distance between two points of tangency $\bvec{t_{ki}}, \bvec{t_{ij}}$. 
        
        Since $\bvec{t_{ki}}, \bvec{t_{ij}}\in\DDK$, by Proposition \ref{prop:dist_and_inner}.1, 
        $ \cosh d(\bvec{t_{ki}}, \bvec{t_{ij}}) = -\frac { \inner{\bvec{t_{ki}}}{\bvec{t_{ij}}} }{\sqrt{ \inner{\bvec{t_{ki}}}{\bvec{t_{ki}}} \inner{\bvec{t_{ij}}}{\bvec{t_{ij}}} }} $.
        So we need to compute $\inner{\bvec{t_{ki}}}{\bvec{t_{ij}}}$ and $\inner{\bvec{t_{ki}}}{\bvec{t_{ki}}}$.

        It can be shown by a direct computation that 
        $\inner{\bvec{x_i}}{\bvec{t_{ki}}}^2-\inner{\bvec{x_i}}{\bvec{x_i}}\inner{\bvec{t_{ki}}}{\bvec{t_{ki}}} = (\lambda_j^k)^2 \delta_{ik}$. Since $\bvec{t_{ki}}$ lies on the circle centered at $\bvec{x_i}$, by Lemma \ref{lem:geod_curvt}, we have 
        \begin{equation}\label{Appeq:xitj}
        \inner{\bvec{x_i}}{\bvec{t_{ki}}} = -\lambda_j^k k_i \sqrt{\delta_{ik}}\ .
        \end{equation}
        Applying this formula and the symmetry principle, we have
        \begin{equation}\label{Appeq:tjtj}
        \inner{\bvec{t_{ki}}}{\bvec{t_{ki}}} 
        = \lambda_j^i \inner{\bvec{x_i}}{\bvec{t_{ki}}} + \lambda_j^k \inner{\bvec{x_k}}{\bvec{t_{ki}}}
        = -\lambda_j^i \lambda_j^k \sqrt{\delta_{ik}} (k_i+k_k) \ .
        \end{equation}

        Note that $\inner{\bvec{x_i}}{\bvec{t_{ki}}} = \lambda_j^i c_{ii} + \lambda_j^k c_{ik}$. 
        Then (\ref{Appeq:xitj}) implies 
        $\lambda_j^k (k_i \sqrt{\delta_{ik}} + c_{ik}) = -\lambda_j^i c_{ii}$. 
        Since $\lambda_j^i \lambda_j^k \neq 0$, by the symmetry principle, we have 
        $ (c_{ik} + k_i \sqrt{\delta_{ik}}) (c_{ik} + k_k \sqrt{\delta_{ik}}) = c_{ii} c_{kk} $.
        Substituting $\delta_{ik} = c_{ik}^2 - c_{ii} c_{kk}$ into the above equation and simplifying, we obtain
        \begin{equation}\label{Appeq:cij}
        -c_{ik} = \frac{1+k_i k_k}{k_i+k_k} \sqrt{\delta_{ik}}\ .
        \end{equation}

        On the other hand, both $\bvec{t_{ki}}, \bvec{t_{ij}}$ lie on the circle centered at $\bvec{x_i}$. By Lemma \ref{lem:gnr_circle}, 
        $ \frac{\inner{\bvec{x_i}}{\bvec{t_{ki}}}^2}{\inner{\bvec{t_{ki}}}{\bvec{t_{ki}}}} = \frac{\inner{\bvec{x_i}}{\bvec{t_{ij}}}^2}{\inner{\bvec{t_{ij}}}{\bvec{t_{ij}}}} $.
        A direct computation gives 
        $ \frac{\inner{\bvec{x_i}}{\bvec{t_{ki}}}^2}{\inner{\bvec{t_{ki}}}{\bvec{t_{ki}}}} = 
        \frac{(\lambda_j^k)^2\delta_{ik}}{\inner{\bvec{t_{ki}}}{\bvec{t_{ki}}}} - c_{ii} $.
        Hence we have
        $ \frac{(\lambda_j^k)^2\delta_{ik}}{\inner{\bvec{t_{ki}}}{\bvec{t_{ki}}}} = \frac{(\lambda_k^j)^2\delta_{ij}}{\inner{\bvec{t_{ij}}}{\bvec{t_{ij}}}} $.
        Using (\ref{Appeq:tjtj}), we get
        \begin{equation}\label{Appeq:sqrdelta}
        \frac{\lambda_k^j\lambda_j^i \sqrt{\delta_{ij}}}{k_i+k_j} =
        \frac{\lambda_j^k\lambda_k^i \sqrt{\delta_{ik}}}{k_i+k_k} \ .
        \end{equation}    
        Now that all quantities in the above are non-zero, by a cyclic product of the formula, we get the following Ceva type formula:
        \begin{equation}\label{Appeq:ceva}
        \lambda_i^j \lambda_j^k \lambda_k^i = \lambda_i^k \lambda_k^j \lambda_j^i \ .
        \end{equation}

        Finally we are able to compute $\cosh d(\bvec{t_{ki}},\bvec{t_{ij}})$.
        By (\ref{Appeq:tjtj}) and (\ref{Appeq:sqrdelta}), 
        \begin{align*}
        \inner{\bvec{t_{ki}}}{\bvec{t_{ki}}}\inner{\bvec{t_{ij}}}{\bvec{t_{ij}}} &= \lambda_j^k \lambda_j^i \sqrt{\delta_{ik}} (k_i+k_k) \ \lambda_k^i \lambda_k^j \sqrt{\delta_{ij}} (k_i+k_j) \\
        &= [\lambda_j^k \lambda_k^i \sqrt{\delta_{ik}} (k_i+k_j)]^2\ .
        \end{align*}
        And
        \begin{align*}
        \inner{\bvec{t_{ij}}}{\bvec{t_{ki}}} 
        &= \lambda_k^i \inner{\bvec{x_i}}{\bvec{t_{ki}}} + \lambda_k^j \lambda_j^i c_{ij} + \lambda_k^j \lambda_j^k c_{jk} \\
        &= -\lambda_k^i \lambda_j^k k_i \sqrt{\delta_{ik}} - \lambda_k^j \lambda_j^i \frac{1+k_i k_j}{k_i+k_j}\sqrt{\delta_{ij}} + \lambda_k^j \lambda_j^k c_{jk} \quad (\textrm{By (\ref{Appeq:xitj}) and (\ref{Appeq:cij})}) \\ 
        &= -\lambda_k^i \lambda_j^k k_i \sqrt{\delta_{ik}} - \frac{\lambda_j^k \lambda_k^i \sqrt{\delta_{ik}}}{k_i+k_k} (1+k_i k_j) + \lambda_k^j \lambda_j^k c_{jk} \quad (\textrm{By (\ref{Appeq:sqrdelta})}) \\ 
        &= -\lambda_j^k \lambda_k^i \sqrt{\delta_{ik}} \left( k_i + \frac{1+k_i k_j}{k_i+k_k} \right) + \lambda_k^j \lambda_j^k c_{jk} \ . 
        \end{align*}
        
        Hence
        $$ \frac { \inner{\bvec{t_{ki}}}{\bvec{t_{ij}}} } {\sqrt{ \inner{\bvec{t_{ki}}}{\bvec{t_{ki}}} \inner{\bvec{t_{ij}}}{\bvec{t_{ij}}} } } =
        -\frac{k_i + \frac{1+k_i k_j}{k_i+k_k}}{k_i+k_j} 
        +\frac{\lambda_k^j c_{jk}}{\lambda_k^i \sqrt{\delta_{ik}} (k_i+k_j)} \ .$$
        By (\ref{Appeq:ceva}) and (\ref{Appeq:sqrdelta}),
        $$ \frac{\lambda_k^j}{\lambda_k^i} = \frac{\lambda_i^j \lambda_j^k}{\lambda_j^i\lambda_i^k} = \frac{\sqrt{\delta_{ik}}}{\sqrt{\delta_{jk}}} \frac{k_k+k_j}{k_k+k_i}\ . $$
        Together with (\ref{Appeq:cij}),
        $$ \frac{\lambda_k^j c_{jk}}{\lambda_k^i \sqrt{\delta_{ik}} (k_i+k_j)} = \frac{c_{jk}}{(k_i+k_j)} \frac1{\sqrt{\delta_{jk}}} \frac{k_k+k_j}{k_k+k_i} = \frac{-(1+k_j k_k)}{(k_i+k_j)(k_i+k_k)} \ .$$
        Then we have
        \begin{equation}\label{Appeq:dist_tjtk}
        \cosh d(\bvec{t_{ki}},\bvec{t_{ij}}) = - \frac { \inner{\bvec{t_{ki}}}{\bvec{t_{ij}}} } {\sqrt{ \inner{\bvec{t_{ki}}}{\bvec{t_{ki}}} \inner{\bvec{t_{ij}}}{\bvec{t_{ij}}} } } = 
        1 + \frac{2}{(k_i+k_j)(k_i+k_k)} \ .
        \end{equation}
        The desired formula (\ref{eq:k123}) 
        is obtained from the above and Proposition \ref{prop:arc_by_kd}.

        \bigskip
        The smoothness is obtained by the chain rules. Consider $(i,j,k)=(1,2,3)$ as an example. Let $l$ be the function of two variables in Proposition \ref{prop:arc_by_kd}, and $D$ be defined as \eqref{Appeq:substitute}. Now $D$ is also a function of all $k_i$'s, so we have 
        \[ \pp{l_1}{k_1}(k_1,k_2,k_3) = \pp{l}{k}(k_1, D_1) + \pp{l}{D}(k_1, D_1) \cdot \pp{D_1}{k_1}(k_1,k_2,k_3), \]
        where $D_1=2/(k_1+k_2)(k_1+k_3)$ from \eqref{Appeq:dist_tjtk}.
        Similarly, 
        \[ \pp{l_1}{k_2}(k_1,k_2,k_3) = \pp{l}{k}(k_1, D_1) \cdot \pp{D_1}{k_2}(k_1,k_2,k_3) . \]
        We have shown the continuity of $\pp{l}{k}$ and $\pp{l}{D}$ in the proof of Proposition \ref{prop:arc_by_kd}, and $D_1$ is a smooth function on $\mathbb{R}^3$ by \eqref{Appeq:dist_tjtk}. Hence, $l_1$ is a $C^1$-function of all $k_i$'s. And so does the total geodesic curvature $T_1=k_1 \cdot l_1$.

    \end{proof}

\begin{proof}[Proof of Corollary \ref{coro:k_P}]
    We shall use the relations in the bigon intersected by two orthogonal circles. See \cite[Lemma 2.7]{BHS}.

    When $k_1>1$, the relation between the inner angle $\theta_1$ and curvature $k_1, k_P$ is given by
    $$ \cot\frac{\theta_1}{2} = \frac{k_P}{\sqrt{k_1^2-1}} \ . $$
    By \eqref{eq:k123}, $\cos\theta_1 = 1+ \frac{2(1-k_1^2)}{(k_1+k_2)(k_1+k_3)}$. Then the desired formula \eqref{eq:k_P} is obtained by the trigonometric identity $\cos2\theta = (\cot^2 \theta -1)/(\cot^2 \theta +1)$.

    When $k_1<1$, we have 
    $$ \coth \frac{\theta_1}{2} = \frac{k_P}{\sqrt{1-k_1^2}} $$
    and the identity $\cosh2\theta = (\coth^2 \theta +1)/(\coth^2 \theta -1)$. The computation is similar.

    When $k_1=1$, we shall consider the length of the horocycle arc and we have 
    $$ l_1 = T_1 = \frac{2}{k_P} \ .$$
    Together with the formula for $T_1$ when $k_1=1$, we have $k_P^2=(k_2+1)(k_3+1)$. This shows \eqref{eq:k_P} still holds when some of the $k_i$'s is $1$.
\end{proof}

\begin{proof}[Proof of Lemma \ref{lem:closed_form}, continued]
    By the proof of Proposition \ref{prop:arc_by_kd}, we have 
    \[ \pp{l}{D} = \frac{1}{\sqrt{D(2+KD)}} \quad \textrm{and} \quad \pp{l}{k} = \frac{k}{K}\left( l - 2\sqrt{\frac{D}{2+KD}}\right) \  .\]
    
    When $k_3=0$ and $k_1,k_2>0$, by \eqref{Appeq:dist_tjtk}, 
    \[ D_1 := \cosh d(t_{12}, t_{31}) -1 = \frac{2}{k_1(k_1+k_2)}, \quad
       D_2 := \cosh d(t_{12}, t_{23}) -1 = \frac{2}{k_2(k_1+k_2)} \]
    are always well-defined. Hence the above formulae still hold for $k_3=0$ when taking partial derivative with respect to $k_1$ or $k_2$ on $\partial^{\{3\}}\Omega_k^\Delta$. By the chain rules,  
    \[ \pp{l_1}{k_2} = \pp{l}{D}(k_1, D_1) \cdot \pp{D_1}{k_2} 
    = \frac{1}{\sqrt{D_1 \left( 2+(1-k_1^2) D_1 \right)}} \frac{-D_1}{k_1 + k_2}
    = \frac{-1}{(k_1+k_2)\sqrt{1+k_1k_2}}\ ,
    \]
    \[ \pp{l_2}{k_1} = \pp{l}{D}(k_2, D_2) \cdot \pp{D_2}{k_1} 
    = \frac{1}{\sqrt{D_2 \left( 2+(1-k_2^2) D_2 \right)}} \frac{-D_2}{k_1 + k_2}
    = \frac{-1}{(k_1+k_2)\sqrt{1+k_1k_2}}\ .
    \]
    Hence $\pp{l_1}{k_2}=\pp{l_2}{k_1}$. And we prove that $T_1\mathrm{d}S_1+ T_2\mathrm{d}S_2$ is closed.
    \end{proof}
\end{appendices}

\bigskip 

Guangming Hu

\textsc{ College of Science, Nanjing University of Posts and Telecommunications, Nanjing, 210003, P.R. China.} 
\textit{Email address}: \texttt{18810692738@163.com}

\medskip
Sicheng Lu 

\textsc{ The Institute of Geometry and Physics, University of Science and Technology of China, Hefei, 230026, P. R. China
}
\textit{Email address}: \texttt{sichenglu@ustc.edu.cn}

\medskip
Dong Tan

\textsc{Guangxi Center for Mathematical Research, Guangxi University, 530004, Nanning, China}
\textit{Email address}: \texttt{duzuizhe2013@foxmail.com}

\medskip
Youliang Zhong 

\textsc{ School of Mathematics, South China University of Technology, 510641, Guangzhou, P. R. China. }
\textit{Email address}: \texttt{zhongyl@scut.edu.cn}

\medskip
Puchun Zhou 

\textsc{ School of Mathematical Sciences, Fudan University, Shanghai, 200433, P.R. China }
\textit{Email address}: \texttt{pczhou22@m.fudan.edu.cn}

\end{document}